\documentclass[11pt]{article}
\usepackage{authblk}

\usepackage[utf8]{inputenc}
\usepackage[T1]{fontenc}
\usepackage[utf8]{inputenc}
\usepackage{graphicx}
\usepackage{amsmath}
\usepackage{mathtools}
\usepackage{marginnote}
\usepackage[numbers]{natbib}

\usepackage[normalem]{ulem}

\usepackage{amsmath,amssymb,amsthm}
\usepackage{cases}
\usepackage{paralist}
\usepackage{verbatim}
\usepackage[right]{eurosym} 
\usepackage{url}
\usepackage{enumerate}
\usepackage{enumitem}

\usepackage{mathrsfs}
\usepackage{dsfont}

\usepackage{graphicx}

\usepackage{mathtools}
\usepackage{marginnote}
\usepackage{placeins}
\usepackage{subcaption}

\usepackage{todonotes}

\usepackage[top=2.5cm,bottom=2.5cm,left=2.5cm,right=2.5cm]{geometry}
\usepackage[colorlinks=true, allcolors=blue]{hyperref}

\newtheorem{Def}{Definition}[section]
\theoremstyle{plain}
\newtheorem{Rem}[Def]{Remark}

\newtheorem{Lemma}[Def]{Lemma}
\newtheorem{Thm}[Def]{Theorem}
\newtheorem{Ex}[Def]{Example}
\newtheorem{Prop}[Def]{Proposition}
\newtheorem{Cor}[Def]{Corollary}
\newtheorem{Notation}[Def]{Notation}
\newtheorem{Ass}[Def]{Assumptions}
\newtheorem{Alg}[Def]{Algorithm}
\newtheorem{Prob}[Def]{Control problem}

\title{Feedback approaches for set-point stabilization of interacting particle systems}

\author{Daniel Jannik Happ\footnote{\href{mailto:dhapp@uni-wuppertal.de}{dhapp@uni-wuppertal.de}} , Birgit Jacob\footnote{ \href{mailto:bjacob@uni-wuppertal.de}{bjacob@uni-wuppertal.de}} ,
Claudia Totzeck\footnote{\href{mailto:totzeck@uni-wuppertal.de}{totzeck@uni-wuppertal.de} }}  
\affil{Port-Hamiltonian Institute, School of Mathematics and Natural Sciences, \\ University of Wuppertal, Germany}
\date{September 2026}

\newcommand{\R}[0]{\mathbb R}
\newcommand{\N}[0]{\mathbb N}

\newcommand{\dx}[1]{\frac{\mathrm{d}}{\mathrm{d}#1}}
\usepackage{mleftright}
\newcommand{\norm}[1]{\lVert #1 \rVert}
\newcommand{\abs}[1]{\left| #1 \right|}

\newcommand{\supp}{\mathrm{supp}}

\newcommand{\msolinit}{\mu_0}

\newcommand{\charX}{X}
\newcommand{\charV}{V}
\newcommand{\Psiop}{\Psi}
\newcommand{\Pot}{\mathbb{V}}
\newcommand{\Xeq}{\hat{X}}
\newcommand{\Veq}{\hat{v}}
\newcommand{\range}{\mathrm{ran}}
\newcommand{\B}{B}
\newcommand{\cB}{\mathbb{B}}
\newcommand{\HS}{E}
\newcommand{\ks}{E_0}
\newcommand{\RN}[1]{%
  \textup{\uppercase\expandafter{\romannumeral#1}}%
}
\newcommand{\rn}[1]{%
  \textup{\lowercase\expandafter{\romannumeral#1}}%
}
\newcommand{\cnst}{\mathrm{Const}(\msolinit; \R^d)}
\newcommand{\wps}{Q}

\newcounter{thmpart}[Def]

\newcommand{\thmpart}{%
  \par
  \refstepcounter{thmpart}%
  \noindent\textup{(\roman{thmpart})}\quad
}

\begin{document}

\maketitle

\begin{abstract}
We explore the problem of asymptotically stabilizing a class of interacting particle systems to a prescribed particle configuration with zero velocity. The class of interacting particle systems is motivated by the Cucker--Smale model on the mean-field level. Rather than designing a feedback by manually solving an IDA-PBC matching equation, we derive a feedback using an instantaneous rolling horizon control algorithm. We show that this feedback preserves the port-Hamiltonian structure of the interacting particle system and therefore allows for an interpretation in the IDA-PBC framework. Additionally, the energy balance of the closed-loop Hamiltonian provides an important tool to prove convergence of the particles' velocities and interaction forces in discrete and continuous time. If the control operator is surjective, we additionally prove convergence of the particle positions to the prescribed configuration.
\end{abstract}

\begin{minipage}{0.9\linewidth}
 \footnotesize
\textbf{AMS classification:} 37K45, 65K10, 82C22, 93D15, 93C95.
\medskip

\noindent
\textbf{Keywords:} Interacting particle systems, set-point stabilization, instantaneous rolling horizon control, port-{H}amiltonian systems, IDA-PBC
\end{minipage}

\section{Introduction}
Interacting particle systems refer to a broad class of mathematical models describing the collective behavior of large groups \cite{naldi2010mathematical}. Besides understanding the internal dynamics of the group, it is often important to influence the behavior of the group according to a prescribed aim. Mathematically, this task can be formulated as a control problem, where the dynamics of the system is influenced by an input that can be modified by the user. Choosing this input such that the prescribed aim is satisfied is the fundamental problem in control theory.
    We consider the set-point stabilization problem of steering a continuous time interacting particle system to a prescribed spatial configuration with zero velocity. The system class is motivated by Cucker--Smale type interacting particle systems \cite{CuckerSmale, CuckerSmalemathem}, which have been widely recognized as mathematical models for collective flocking behavior. Those systems can be formulated as \textit{port-Hamiltonian} systems \cite{vdDphSsurvey, JaTo2024}. For port-Hamiltonian systems, several structure preserving control approaches such as \textit{control by interconnection} together with energy shaping by Casimir functions \cite{CBI} and the \textit{interconnection and damping assignment - passivity based control} (IDA-PBC) method \cite{IDAPBCsurvey, IDAPBC} have already been developed, see also \cite{vdSpHcontrol} for an overview of control for port-Hamiltonian systems. In the IDA-PBC approach, one needs to solve a matching equation to shape the dynamics of the underlying system into a desired port-Hamiltonian closed-loop system. The matching equation has many degrees of freedom as the interconnection matrix, the damping matrix and the closed-loop Hamiltonian function can be chosen freely. The matching equation is sufficiently general in the sense that if the desired set-point can be stabilized by some feedback, then the closed-loop system necessarily has a port-Hamiltonian formulation \cite[Lemma $1$]{IDAPBC} and thus a solution to the matching equation exists. However, the proof is not constructive as the closed-loop Hamiltonian is defined by an inverse Lyapunov function. 
    When solving the matching equation in practice, one often has to reduce the degrees of freedom by (partially) fixing either the energy function, or the interconnection and damping matrix \cite{IDAPBCsurvey}. Solving the matching equation in practice can therefore be a difficult manual process. To the best of our knowledge, for a given control operator, no explicit characterization is available for all realizable port-Hamiltonian closed-loop systems. \\
    In contrast, \textit{model predictive control} (MPC) approaches a set-point stabilization problem from an optimization point of view. Here, the input is chosen such that the system trajectory minimizes a user designed cost function which has a minimum at the point to be stabilized \cite{Gruene, MayneconstrMPC}. As an abstract paradigm, MPC can be applied to a large class of dynamical systems. Therefore, it is not inherently designed to preserve additional structures that a particular system might have, such as a port-Hamiltonian formulation. The instantaneous rolling horizon control method is a discrete time inexact variant of MPC, where a prediction horizon of one time step and an approximate solution to the optimization problem are used \cite{Hinzeflcontr, Hinzeinstcontr}. Based on a chosen cost function and a numerical time integration scheme of the underlying continuous time system, instantaneous rolling horizon control explicitly computes a feedback, which then can be interpreted as a discretized version of a continuous time feedback.
    The purpose of the present article is to demonstrate a connection between the IDA-PBC and the instantaneous rolling horizon control method.
    We show that for a class of interacting particle systems, the instantaneous rolling horizon controller computes a feedback leading to a continuous time port-Hamiltonian closed-loop system. Therefore, the feedback can be interpreted in the IDA-PBC framework. Moreover, the closed loop Hamiltonian serves as a Lyapunov function which is not available in an abstract instantaneous control approach and forms the basis for the asymptotic analysis.  \\
    This article is structured as follows: 
    First, we introduce the Cucker--Smale model as a running example in Section \ref{sec:exCS}. In Section \ref{sec:abstractframe}, we introduce the abstract system class together with the set-point stabilization problem. The subsequent Section \ref{sec:instcontr} is devoted to the derivation of the instantaneous rolling horizon feedback controller, which is then interpreted as an IDA-PBC feedback in Section \ref{sec:IDAPBC}. We turn to the well-posedness and convergence analysis of the closed-loop system in Section \ref{sec:WpandAs}. Finally, those results are applied to the mean-field Cucker--Smale model in the final Section \ref{sec:aplications}.

\section{An abstract Port-Hamiltonian control framework for interacting particle systems}\label{Sec:contrlsystem}

    Before introducing the abstract system class, we start with motivating examples, which we also use as running examples throughout this work.
        \subsection{Running examples}\label{sec:exCS}
         
   \begin{Ex}[Cucker--Smale model on the mean-field level] \label{ex:CSmf}
       Consider a possibly infinite number of interacting particles in $\R^d$. Each particle has an $\R^d$-valued position and velocity coordinate $\charX$ and $\charV$. We assume that at time zero, the mass distribution of the particles is given by a compactly supported Borel probability measure on $\R^d \times \R^d$. Then $\charX$ and $\charV$ evolve according to the Newtonian dynamics \begin{subequations}\label{eq:excharsys}
        \begin{align}
      \dx{t} \charX_t &= \charV_t , \label{eq:excharsysX} \\ 
        \dx{t} \charV_t &= - \Psiop(\charX_t) \charV_t - \nabla \Pot (\charX_t) + \cB u(t), \label{eq:excharsysV} \quad 
        \charX(0) = \charX_0, \ \charV(0) = \charV_0 \, .
        \end{align}
    \end{subequations}
       Here, $\charX$ and $\charV$ take values in the Hilbert space $ L^2(\msolinit; \R^d) = L^2(\R^d \times \R^d ; \mu_0; \R^d) $ of all $\msolinit$-square-integrable $\R^d$-valued functions. Moreover, $\Psiop$ and $\nabla \Pot$ are given by
       \begin{align}
        \Psiop(\charX) \colon L^2(\msolinit; \R^d) &\to L^2(\msolinit; \R^d), \, V \mapsto \Psiop(\charX) \charV, \, \charX \in L^2(\msolinit; \R^d) , \nonumber \\
         (\Psiop(\charX)\charV)(x,v) = \int \psi(&\abs{\charX(x,v) - \charX(x^\prime, v^\prime)}) (\charV(x,v) - \charV(x^\prime, v^\prime)) \, \mathrm{d} \msolinit(x^\prime, v^\prime) \, , \label{eq:defCSPsiop} \\
        \Pot \colon L^2(\msolinit; \R^d) \to \R, \quad \Pot (\charX) &= \frac{1}{2} \int \int \mathcal{V}(\charX(x,v) - \charX(x^\prime, v^\prime) ) \, \mathrm{d} \msolinit(x,v) \, \mathrm{d} \msolinit(x^\prime, v^\prime), \nonumber \\
        (\nabla \Pot (\charX))(x,v) &= \int \nabla \mathcal{V}(\charX(x,v) - \charX(x^\prime, v^\prime) ) \, \mathrm{d} \msolinit(x^\prime, v^\prime) \, . \label{eq:defCSPot}
    \end{align}
        In addition, $U$ is an abstract Hilbert space and $\cB \in \mathscr{L}(U;L^2(\msolinit; \R^d))$ is a control operator. The first term on the right-hand side of \eqref{eq:excharsys} was introduced by Cucker and Smale in \cite{CuckerSmale} to model the alignment of particles to a common velocity. The function $\psi \in C(\R_+;\R_+)$ models the alignment strength while $\mathcal{V} \in C^1(\R^d; \R)$ denotes the potential of the binary interactions. According to Newton's third law, $\nabla \mathcal{V}$ has to be antisymmetric.
        We refer to \cite{JaTo2024} for details on the mean-field limit of the system considered here and \cite{golse} for a general introduction to mean-field limits. 
        A system with $N \in \N$ particles is obtained by taking the mass distribution
        $\msolinit = \frac{1}{N} \sum_{j= 1}^N \delta_{\hat{x}_j} \otimes \delta_{\hat{v}_j }$, 
    where $\hat{x}_j, \hat{v}_j, j = 1, \ldots N$ are the particles' initial positions and velocities.
   \end{Ex}
   \begin{Ex}[Cucker--Smale model on the mean-field level with an external control agent]\label{ex:CsextAg}
       The mean-field Cucker--Smale model can be modified to incorporate linear interactions with an external control agent. Let $\B \in \R^{d \times d}$ be symmetric, positive semidefinite and $U \coloneq \R^d$. We assume that a particle located at $x \in \R^d$ experiences an acceleration of $B(u-x)$ towards the position $u$ of the external agent. In this situation, we replace the potential $\Pot  \colon L^2(\msolinit; \R^d ) \to \R$ by  
       \begin{align}
           \Pot (\charX) &= \frac{1}{2} \int \int \mathcal{V}(\charX(x,v) - \charX(x^\prime, v^\prime) ) \, \mathrm{d} \msolinit(x,v) \, \mathrm{d} \msolinit(x^\prime, v^\prime)  + \frac{1}{2} \int  \charX(x,v) \cdot  B \charX(x,v) \, \mathrm{d} \msolinit(x,v) \label{eq:PotextAg},
       \end{align}
       and the control operator is given by 
       \begin{align}
           \cB \colon \R^d \to L^2(\msolinit; \R^d), \ u \mapsto (B u) \mathds{1}. \label{eq:cbextAg}
       \end{align}
       Here, for $v \in \R^d$, $v \mathds{1}$ denotes the constant function in $L^2(\msolinit; \R^d)$ with value $v$, that is 
    \begin{align*}
        (v \mathds{1}) (y,w) = v \text{ for } \msolinit  \text{-almost every } (y,w) \in \R^d \times \R^d \, .
    \end{align*}
   \end{Ex}
   For the examples introduced above, we want to asymptotically stabilize a reference velocity and spatial configuration in the following sense: 
   \begin{Prob}\label{Prob:exstab}
       Given a desired reference velocity $\Veq \in C^1([0,\infty);\R^d)$ and a spatial particle configuration $\Xeq \in L^2(\msolinit; \R^d)$, we want to find a control input $u \colon [0,\infty) \to U$ such that
    \begin{align}
        \charV_t - \Veq(t) \mathds{1} \to 0 \text{ and } \charX_t - \int_0^t \Veq(s) \mathds{1} \, \mathrm{d}s \to \Xeq \text{ in } L^2(\msolinit; \R^d) \text{ as } t \to \infty  \, . \label{eq:convaim}
    \end{align}
   \end{Prob}
   We close this subsection by showing that under additional assumptions on $\cB$, the control Problem \ref{Prob:exstab} in Example \ref{ex:CSmf} can be reduced to the case $\Veq = 0$.
         
   \begin{Lemma}\label{lemma:rightinverse}
       Let $\Theta, U$ be Hilbert spaces and $\cB \in \mathscr{L}(U;\Theta)$ have closed range. Then there exists a right inverse $R$ for $\cB$, that is $R \in \mathscr{L}(\range(\cB) ; U )$ satisfying $\cB R = \mathrm{id}_{\range(\cB)}$.
   \end{Lemma}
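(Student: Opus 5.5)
The plan is to build $R$ by inverting the restriction of $\cB$ to the orthogonal complement of its kernel, and then use the bounded inverse theorem to get boundedness. Since $\cB$ is bounded, $\ker(\cB)$ is a closed subspace of $U$. Hence $U = \ker(\cB) \oplus \ker(\cB)^\perp$, and $U_1 \coloneq \ker(\cB)^\perp$ is itself a Hilbert space. We consider the restriction $\cB_1 \coloneq \cB|_{U_1} \colon U_1 \to \range(\cB)$.

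The next step is to check that $\cB_1$ is bijective.
\begin{itemize}
\item \emph{Injectivity:} if $u \in U_1$ and $\cB u = 0$, then $u \in \ker(\cB) \cap \ker(\cB)^\perp = \{0\}$.
\item \emph{Surjectivity onto $\range(\cB)$:} any $\theta = \cB u$ with $u \in U$ can be written using $u = u_0 + u_1$, where $u_0 \in \ker(\cB)$ and $u_1 \in U_1$. Then $\theta = \cB u_1 = \cB_1 u_1$.
\end{itemize}

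Now $\range(\cB)$ is closed in the Hilbert space $\Theta$ by assumption, so it is a Hilbert (in particular Banach) space with the inherited norm. Thus $\cB_1$ is a bounded linear bijection between Banach spaces. By the bounded inverse theorem (open mapping theorem), $\cB_1^{-1} \in \mathscr{L}(\range(\cB); U_1)$.

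Finally, set $R \coloneq \iota \circ \cB_1^{-1}$, where $\iota \colon U_1 \hookrightarrow U$ is the isometric inclusion. Then $R \in \mathscr{L}(\range(\cB);U)$. For $\theta \in \range(\cB)$ we get $\cB R \theta = \cB_1 \cB_1^{-1}\theta = \theta$, which is the claim.

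The only step with real content is the boundedness of $\cB_1^{-1}$. This is exactly where the closed-range hypothesis enters: without it, $\range(\cB)$ need not be complete and the open mapping theorem would not apply. Everything else is elementary Hilbert space decomposition.
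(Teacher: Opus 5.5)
Your proof is correct and takes the same route as the paper. The paper only cites the open mapping theorem (Brezis, Theorem 2.12, i.e.\ a surjection has a bounded right inverse when its kernel is complemented), and you carry this out explicitly: you take $\ker(\cB)^\perp$ as the complement, use closedness of $\range(\cB)$ for completeness, and apply the bounded inverse theorem to $\cB|_{\ker(\cB)^\perp} \colon \ker(\cB)^\perp \to \range(\cB)$.
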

   Lemma \ref{lemma:rightinverse} is a consequence of the open mapping theorem, see \cite[Theorem $2.12$]{Brezis}.
    
   \begin{Prop}\label{prop:redctv0}
       Consider the mean-field Cucker--Smale model in Example \ref{ex:CSmf} or  \ref{ex:CsextAg} and assume that the control operator $\cB$ has closed range and satisfies 
       \begin{align}
             \{ \Veq (t) \mathds{1} \mid t \geq 0 \} \subseteq \range(\cB) \, . \label{eq:constfcts}
        \end{align}
       Let $u \in C([0,\infty) ; U )$ and $\charX, \charV \in C^1([0,\infty) ; L^2(\msolinit; \R^d) )$ be solutions to \eqref{eq:excharsysX} and \eqref{eq:excharsysV}, respectively. Let $R$ be a right inverse of $\cB$, $\Veq \in C^1([0,\infty);\R^d)$ and define 
    \begin{align*}
        \tilde{\charX}_t \coloneq \charX_t - \int_0^t \Veq(s) \mathds{1} \, \mathrm{d}s, \quad \tilde{\charV}_t \coloneq \charV_t - \Veq (t) \mathds{1}, \quad \tilde{u}(t) \coloneq u(t) - R  \Big( \dx{t} \Veq (t) \mathds{1} \Big), \quad \, t \geq 0 \, .
    \end{align*}
    Then $\tilde{\charX}, \tilde{\charV} \in C^1([0,\infty);L^2(\msolinit; \R^d))$ satisfy \eqref{eq:excharsysX} and \eqref{eq:excharsysV}, respectively. Similarly, for the Cucker--Smale model with an external agent as in Example \ref{ex:CsextAg}, $\tilde{\charX}, \tilde{\charV}$ satisfy \eqref{eq:excharsysX} and \eqref{eq:excharsysV}, respectively, if we replace $\tilde{u}$ by 
    \begin{align}
        \tilde{u}(t) = u(t) - R  \Big( \dx{t} \Veq (t) \mathds{1} \Big) - \int_0^t \Veq (s) \, \mathrm{d}s \, . \label{eq:tildeuextAg}
    \end{align}
   \end{Prop}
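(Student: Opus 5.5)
The plan is a direct substitution argument, carried out separately for the two examples. Regularity first: $t\mapsto \int_0^t \Veq(s)\mathds{1}\,\mathrm{d}s$ and $t\mapsto \Veq(t)\mathds{1}$ are $C^1$ with values in $L^2(\msolinit;\R^d)$, because $\Veq\in C^1([0,\infty);\R^d)$ and the map $v\mapsto v\mathds{1}$ from $\R^d$ to $L^2(\msolinit;\R^d)$ is linear and bounded. Hence $\tilde\charX,\tilde\charV\in C^1([0,\infty);L^2(\msolinit;\R^d))$. Equation \eqref{eq:excharsysX} for the tilde variables is then immediate:
\[
\dx{t}\tilde\charX_t=\charV_t-\Veq(t)\mathds{1}=\tilde\charV_t.
\]
By \eqref{eq:constfcts} and the fact that $\range(\cB)$ is a closed subspace, we have $\dx{t}\Veq(t)\mathds{1}\in\range(\cB)$: it is a limit of difference quotients of elements of $\range(\cB)$. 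So $R$ from Lemma \ref{lemma:rightinverse} can be applied to it, and $\cB R(\dx{t}\Veq(t)\mathds{1})=\dx{t}\Veq(t)\mathds{1}$.

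The key step uses two invariance properties of the nonlinear terms under adding constant functions. First, since $\Psiop(\charX)$ in \eqref{eq:defCSPsiop} depends only on the differences $\charX(x,v)-\charX(x',v')$ and acts on differences $\charV(x,v)-\charV(x',v')$, we have
\[
\Psiop(\charX+c\mathds{1})=\Psiop(\charX), \qquad \Psiop(\charX)(c\mathds{1})=0 \quad\text{for all } c\in\R^d.
\]
Second, the interaction part of $\nabla\Pot$ in \eqref{eq:defCSPot} also depends only on differences, so it is unchanged when a constant is added to $\charX$. With these, Example \ref{ex:CSmf} follows by differentiating $\tilde\charV$ and inserting \eqref{eq:excharsysV}:
\[
\dx{t}\tilde\charV_t=-\Psiop(\charX_t)\charV_t-\nabla\Pot(\charX_t)+\cB u(t)-\dx{t}\Veq(t)\mathds{1}.
\]
Here $\Psiop(\charX_t)\charV_t=\Psiop(\tilde\charX_t)\tilde\charV_t$, $\nabla\Pot(\charX_t)=\nabla\Pot(\tilde\charX_t)$, and $\cB u(t)-\dx{t}\Veq(t)\mathds{1}=\cB\tilde u(t)$.

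For Example \ref{ex:CsextAg}, $\nabla\Pot$ has an additional term $B\charX$ coming from the quadratic part of \eqref{eq:PotextAg}. Writing $\charX_t=\tilde\charX_t+w(t)\mathds{1}$ with $w(t)=\int_0^t\Veq(s)\,\mathrm{d}s$, the gradient splits as
\[
\nabla\Pot(\charX_t)=\nabla\Pot(\tilde\charX_t)+(Bw(t))\mathds{1}.
\]
The extra constant $(Bw(t))\mathds{1}$ equals $\cB w(t)$ by \eqref{eq:cbextAg}. It is absorbed by the additional shift $-\int_0^t\Veq(s)\,\mathrm{d}s$ in \eqref{eq:tildeuextAg}: indeed
\[
\cB\tilde u(t)=\cB u(t)-\dx{t}\Veq(t)\mathds{1}-(Bw(t))\mathds{1},
\]
which is exactly what is needed.

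I expect no real obstacle beyond careful bookkeeping. The points to check are the translation invariance of the interaction terms, that $\dx{t}\Veq(t)\mathds{1}$ lies in the domain of $R$, and that the quadratic term contributes exactly $(Bw(t))\mathds{1}$ in the gradient. The sign of this last contribution is what fixes the minus sign in \eqref{eq:tildeuextAg}.
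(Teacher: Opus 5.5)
Your proposal is correct and follows the paper's proof. For Example~\ref{ex:CSmf}, both arguments rest on the translation invariances $\Psiop(\charX+x\mathds{1})=\Psiop(\charX)$, $\Psiop(\charX)v\mathds{1}=0$ and $\nabla\Pot(\charX+x\mathds{1})=\nabla\Pot(\charX)$. For Example~\ref{ex:CsextAg}, both use $\nabla\Pot(\charX+x\mathds{1})=\nabla\Pot(\charX)+(Bx)\mathds{1}$.

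The one minor difference is how $\dx{t}\Veq(t)\mathds{1}$ is placed in $\range(\cB)$. You use difference quotients and the closedness of $\range(\cB)$. The paper instead notes that this derivative lies in the finite-dimensional, hence closed, span of $\{\Veq(t)\mathds{1}\}$, which is why its subsequent remark can drop the closed-range hypothesis.
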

   \begin{proof}
       Assumption \eqref{eq:constfcts} implies that 
       \begin{align}
           M \coloneq \mathrm{span} \{ \Veq (t) \mathds{1} \mid t \geq 0 \} \subseteq \range (\cB) \, . \label{eq:Mdef}
       \end{align}
       Moreover, $M$ is finite dimensional and therefore closed. In particular, $\dx{t} \Veq \mathds{1}$ takes values in $M$ and thus $\tilde{u}$ is well-defined. For the Cuker--Smale model in Example \ref{ex:CSmf}, the fact that $\tilde{\charX}$ and $ \tilde{\charV}$ satisfy \eqref{eq:excharsysX} and \eqref{eq:excharsysV} follows from the translational invariance of $\Psiop$ and $\nabla \Pot$, that is, we have
       \begin{align*}
           \nabla \Pot(\charX + x\mathds{1} ) = \nabla \Pot (\charX), \ \Psiop(\charX + x \mathds{1}) = \Psiop(\charX) , \ \Psiop(\charX) v \mathds{1} = 0 
       \end{align*}
       for all $\charX \in L^2(\msolinit; \R^d)$ and $x,v \in \R^d$. Similarly, if we consider Example \ref{ex:CsextAg} where $\nabla \Pot, \cB$ are given by \eqref{eq:PotextAg} and \eqref{eq:cbextAg}, we use the identity 
       \begin{align*}
           \nabla \Pot(\charX + x \mathds{1} ) = \nabla \Pot(\charX) + (B x) \mathds{1} 
       \end{align*}
       to conclude that $\tilde{\charX}, \tilde{\charV}$ and $\tilde{u}$ as in \eqref{eq:tildeuextAg} satisfy \eqref{eq:excharsysX} and \eqref{eq:excharsysV}.
   \end{proof}
   According to Proposition \ref{prop:redctv0}, the control problem \ref{Prob:exstab} reduces to design $\tilde{u}$ such that $\tilde{X}_t \stackrel{t \to \infty}{\to} \Xeq$ and $\tilde{\charV}_t \stackrel{t \to \infty}{\to} 0$ in $L^2(\msolinit)$.
   \begin{Rem}
       The reduction step to $\Veq = 0$ can also be done without the assumption that $\range(\cB)$ is closed. Indeed, it suffices to construct a right inverse $R$ of $\cB$ on the finite-dimensional space $M $ as in \eqref{eq:Mdef}.
   \end{Rem}

   \subsection{Abstract framework}\label{sec:abstractframe}
    Motivated by the previous examples, we study the following abstract interacting particle system:
    \begin{subequations}\label{eq:controlChar}
    \begin{align}
        \dx{t} \charX_{t} &= \charV_{t} \label{eq:controlCharX}, \\ 
        \dx{t} \charV_{t} &= - \Psiop(\charX_{t}) \charV_{t} - \nabla \Pot (\charX_{t}) + \cB u(t), \label{eq:controlCharV} \quad 
        \charX(0) = \charX_{0}, \quad \charV(0) = \charV_{0} \, ,
    \end{align}
    \end{subequations}
    where $\charX, \charV$ take values in a Hilbert space $\HS$. Subsequently, we always work under the following standing assumptions:
    \begin{Ass}\label{ass:control}
    \begin{enumerate}[leftmargin=2em]
        \item $\HS, U$ are real Hilbert spaces. By $\norm{\cdot}$, we denote the norm of the respective Hilbert space, while scalar products are denoted by $\langle \cdot, \cdot \rangle$.
       \item The potential $\Pot \in C^1(\HS ;\R)$ is bounded from below by a constant $\underline{\mathcal{V}} \in \R$ and there exists a constant $C > 0$ such that we have 
       \begin{align}
           \norm{\nabla \Pot (\charX)}^2 \leq C(1+ \Pot(\charX) - \underline{\mathcal{V}}) \text{ for all } \charX \in \HS \, . \label{eq:potgrwthbd}
       \end{align}
       \item $\Psiop \colon \HS \to \mathscr{L}(\HS)$ is bounded and $\Psi(\charX)$ is self-adjoint and positive semidefinite for all $\charX \in \HS$. 
       \item The control operator $\cB \in \mathscr{L}(U; \HS)$ has closed range.
    \end{enumerate}
   \end{Ass}
   \begin{Rem}
       In Section \ref{sec:aplications}, we will present additional conditions on $\psi$ and $\mathcal{V}$ in the Cucker--Smale model, which ensure that Assumption \ref{ass:control} is satisfied.
   \end{Rem}
    
    In the abstract setting, we consider the following control problem:
    \begin{Prob}\label{problem:stabilize}
        Suppose we are given a desired spatial particle configuration $\Xeq \in \HS$. We want to find a control input $u \colon [0,\infty) \to U$ that asymptotically steers the system \eqref{eq:controlChar} to $\Xeq$, that is, we want to ensure that 
    \begin{align}
        \charV_t \to 0 
        \text{ and } 
        \charX_t  \to \Xeq \text{ in } \HS \text{ as } t \to \infty \, . \label{eq:abstrconvaim}
    \end{align}
    \end{Prob} 
   We close this section with a brief review of the IDA-PBC control approach. 
   To tackle the control problem \ref{problem:stabilize}, the IDA-PBC approach exploits the port-Hamiltonian formulation
   \begin{align*}
            \dx{t} \begin{pmatrix}
                \charX_t \\ 
                \charV_t 
            \end{pmatrix}
             &= \left( 
            \begin{pmatrix}
                0 & I \\
                -I & 0
            \end{pmatrix}
            - \begin{pmatrix}
                0 & 0\\
                0&  \Psiop(\charX_t)  
            \end{pmatrix}
             \right) 
            \nabla H(\charX_t, \charV_t) 
            + \begin{pmatrix}
                0 \\
                \cB 
            \end{pmatrix} u(t), 
            \end{align*}
        with initial condition $\charX(0) = \charX_{0}$ and $ \charV(0) = \charV_{0}$ of the open loop system \eqref{eq:controlChar}. Here, the \textit{Hamiltonian} $H$ is given by
    \begin{align*}
        H \colon \HS \times \HS \to \R , \ 
        H(\charX, \charV) = \frac{1}{2} \norm{\charV}^2 + \Pot(\charX) \, .
    \end{align*}
    The aim of the IDA-PBC approach is to find a feedback law $u(t) = \alpha(\charX_t, \charV_t)$ such that the closed-loop system with initial condition $\charX(0)  = \charX_0$ and $\charV(0) = \charV_0 $ exhibits the asymptotic behavior \eqref{eq:abstrconvaim} and additionally admits a port-Hamiltonian representation
        \begin{align}
            \dx{t} \begin{pmatrix}
                \charX_t \\ 
                \charV_t 
            \end{pmatrix}
            &= (J_d(\charX_t, \charV_t) - R_d(\charX_t, \charV_t) ) \nabla H_{\text{cl}}(\charX_t, \charV_t)  
            \, .  \label{eq:abstclsys}
            \end{align}
        The key difficulty is to design the (skew-symmetric, respectively positive semidefinite) operator-valued maps $J_d, R_d$ and the closed-loop Hamiltonian $H_{\text{cl}}$ in a way that both requirements are fulfilled.

\section{Instantaneous rolling horizon control}\label{sec:instcontr}

    Having introduced the abstract Control problem \ref{problem:stabilize}, we will now turn to solution approaches. Instead of following the IDA-PBC approach directly, we introduce the instantaneous rolling horizon control method in this section. Let $\Xeq \in \HS$ be a fixed desired configuration and let $\Pot_{\text{des}} \in C^1(\HS;\R)$ have a unique global minimum at $\Xeq$. We define the desired Hamiltonian as 
    \begin{align*}
        H_{\text{des}} \colon \HS \times \HS \to \R , \, (\charX,\charV) \mapsto \frac{1}{2} \norm{\charV}^2 + \Pot_{\text{des}} (\charX) \, .
    \end{align*}
    In the following, we use $H_{\text{des}}$ as a cost function for a model predictive control approach. In continuous time, this yields: Given a current time point $t_l$, a prediction horizon $\Delta t > 0$ and the current states $\charX_{t_l}$ and $\charV_{t_l}$, we want to design a control $u\colon [t_l, t_l + \Delta t] \to U$ such that the cost 
    \begin{align}
        J_c(\charX,\charV,u) = \int_{t_l}^{t_l + \Delta t} H_{\text{des}}( \charX_s , \charV_s ) \, \mathrm{d}s \label{eq:Jcconttime}
    \end{align}
    is minimized subject to \eqref{eq:controlChar}. Then the process is repeated on the next time interval $[t_l + \Delta t, t_l + 2 \Delta t]$. We summarize the approach in the following algorithm:
    \begin{Alg}[Continuous time model predictive control]\label{Alg:MPC}
        Given initial conditions $\charX_0, \charV_0 \in \HS$ at time $t_0 = 0$, set $l = 0$ and iterate as follows:
        \begin{enumerate}[leftmargin=2em]
            \item\label{item:MPC1} Compute a minimizer $u^l \colon [t_l, t_l + \Delta t] \to U$ of the cost-functional \eqref{eq:Jcconttime} subject to the dynamics \eqref{eq:controlChar} (provided a minimizer exists in an appropriate space).
            \item Compute $\charX_{t_l + \Delta t}, \charV_{t_l + \Delta t}$ using a minimizer $u^l$, set $t_{l+1} = t_l + \Delta t, \, l = l + 1$ and go back to step \ref{item:MPC1}. 
        \end{enumerate}
    \end{Alg}
    However, for our purposes, the general model predictive control algorithm poses two main difficulties: First, since the system \eqref{eq:controlChar} is in general nonlinear and the cost-functional $J_c$ depends on $u$ only via the state variables $\charX$ and $\charV$, the minimization problem \eqref{eq:Jcconttime} lacks an apparent coercive or convex structure even if $\Pot_{\text{des}}$ is quadratic. Second, even if a minimizer $u$ exists, it is not clear whether it can be represented by a state feedback $u(t) = \alpha (\charX_t, \charV_t)$. However, such a state feedback form is necessary to interpret the closed-loop system in the IDA-PBC framework. For those prescribed reasons, we will use the instantaneous rolling horizon control method instead, which is an inexact version of Algorithm \ref{Alg:MPC} and works as follows \cite{Hinzeflcontr}: 
    On each time-interval $[t_l, t_l + \Delta t]$, the continuous time system \eqref{eq:controlChar} is discretized using some numerical time integration scheme with stepsize $\Delta t$, that is, only a single time step is performed on each subinterval. In short, we write $(\charX^{l+1}, \charV^{l+1} ) = \Phi (\Delta t, \charX^l , \charV^l, u^l)$, where the function $\Phi$ is defined in terms of the chosen numerical integration scheme. Moreover, we replace $J_c$ by a discrete analogue given by
    \begin{align}
        J_{c,d} \colon \HS \times \HS \times U \to \R, \ J_{c,d} (\charX^{l+1},\charV^{l+1},u^l) \coloneq  \Delta t H_{\text{des}}(\charX^{l+1},\charV^{l+1}) \,  . \label{eq:Jcd}
    \end{align}
    We define the reduced discrete cost-functional $f_d^l$ as 
    \begin{align*}
        f_d^l \colon U \to \R , \ f_d^l (u^l) \coloneq J_{c,d} (\Phi(\Delta t, \charX^l, \charV^l, u^l), u^l) \, .
    \end{align*}
     Instead of looking for a global minimum of $f_d^l$, only a single gradient step is computed in each instance of the algorithm. The resulting input is then used to compute $\charX^{l+1}$ and $\charV^{l+1}$. We summarize the approach in the following algorithm:
    \begin{Alg}[Instantaneous rolling horizon control]\label{ICA}
    Given initial conditions $\charX_0, \charV_0$, a numerical integration scheme $\Phi$ with a time step size $\Delta t > 0$, a desired potential $\Pot_{\text{des}} \in C^1(\HS; \R)$, a sequence of initial control guesses $(u_0^l)_{l \in \N_0} \subseteq U$, set $l = 0$ and proceed as follows:
    \begin{enumerate}[leftmargin=2em]\label{Alg:instcontr}
        \item\label{item:ICA1} Compute the updates $(\charX^{l+1}_0, \charV^{l+1}_0) = \Phi(\Delta t, \charX^{l}, \charV^l, u_0^l)$ using the initial guess $u_0^l$.
        \item Compute the gradient $\nabla f_d^l(u_0^l)$ of the reduced discrete cost-functional in terms of $\charX^{l+1}_0$ and $ \charV^{l+1}_0$.
        \item Choose a step-size $\rho^l > 0$ and do a single gradient step
            $u^l = u_0^l - \rho^l \nabla f_d^l (u_0^l)$ .
        \item Compute $(\charX^{l+1},  \charV^{l+1}) = \Phi(\Delta t, \charX^l, \charV^l, u^l)$, set $l = l+1$ and go back to step \ref{item:ICA1}.
    \end{enumerate}
    \end{Alg}
    Typically, we will choose the gradient step-size $\rho^l = \rho > 0$ independent of $l$ and the initial control guess $u_0^l= u_0(\Delta t, \rho, \charX^l, \charV^l)$ as a function of $\charX^{l}, \charV^{l}$ and the step-sizes $\Delta t, \rho $. In this case, the instantaneous control method can be interpreted as a discrete time approximation using the integration scheme $\Phi$ to a closed-loop system with a feedback-law $u^l = \alpha(\Delta t, \rho, \charX^l, \charV^l)$, that is
    \begin{align}
        (\charX^{l+1} , \charV^{l+1}) &= \Phi (\Delta t, \charX^l, \charV^l, \alpha (\Delta t, \rho, \charX^l, \charV^l) ), \label{eq:discrcl}\\
        \text{ where } \alpha (\Delta t, \rho, \charX^l, \charV^l) &=u_0(\Delta t, \rho, \charX^l, \charV^l) - \rho \nabla f_d^l( u_0(\Delta t, \rho, \charX^l, \charV^l)) \label{eq:abstrfeedb} \, .
    \end{align}
    Once the feedback \eqref{eq:abstrfeedb} has been derived, the following questions arise naturally:
    \begin{enumerate}[leftmargin=2em]
        \item If we see $\Delta t, \rho$ as parameters, does the continuous time closed-loop system \eqref{eq:controlChar} with feedback $u(t) = \alpha (\Delta t, \rho, \charX_t, \charV_t)$ admit a port-Hamiltonian formulation?
        \item Does the corresponding continuous time closed-loop system \eqref{eq:controlChar} converge to the desired equilibrium $(\Xeq, 0)$ as $t \to \infty$?
        \item Does the discrete time closed-loop system \eqref{eq:discrcl} converge to the desired configuration $(\Xeq, 0)$ as $l \to \infty$?
    \end{enumerate}
    A key strength of Algorithm \ref{Alg:instcontr} is that the updates $\charX^{l+1}, \charV^{l+1}$ and the feedback law $\alpha$ can be computed explicitly. We will demonstrate this for an Euler-type scheme.
    \begin{Prop}[Instantaneous control for the semi-explicit Euler scheme]\label{prop:semex}
        If the system \eqref{eq:controlChar} is discretized using the semi-explicit Euler scheme
        \begin{subequations}\label{eq:semi-explEuler0}
        \begin{align}
            \charX^{l+1} &= \charX^{l} + \Delta t \charV^{l+1}, \\ 
            \charV^{l+1} &= \charV^{l} - \Delta t \big(\Psiop(\charX^l) \charV^{l} + \nabla \Pot (\charX^l) \big) + \Delta t \cB u^l, \,
            \charX^0 = \charX_{0}, \quad \charV^0 = \charV_{0} \, ,
        \end{align}
        \end{subequations}
        and the cost is given by \eqref{eq:Jcd}, then Algorithm \ref{ICA} computes the following updates:
        \begin{subequations}
            \begin{align}
            \charX^{l+1}_0 &= \charX^{l} + \Delta t \charV_0^{l+1}, \label{eq:semexEulerX0lp} \\ 
            \charV^{l+1}_0 &=   \charV^{l} - \Delta t( \Psiop (\charX^l) \charV^l + \nabla \Pot (\charX^l))  + \Delta t \cB u^l_0 , \label{eq:semexEulerV0lp} \\
            \nabla f_d^l(u_0^l) &= (\Delta t)^2 \cB^* (\charV_0^{l+1} + \Delta t \nabla \Pot_{\text{des}}(\charX_0^{l+1} ) ) , \\
            u^l &= (I - \rho^l (\Delta t)^3 \cB^* \cB)u_0^l - \rho^l (\Delta t)^2 \cB^* V^l + \rho^l (\Delta t)^3 \cB^* (\Psiop(\charX^l) \charV^l + \nabla \Pot(\charX^l) ) \nonumber \\
            &\quad - \rho^l (\Delta t)^3 \cB^* \nabla \Pot_{\text{des}} (X_0^{l+1} )  \label{eq:sem1EuleruUpd} ,  \\
            \charX^{l+1} &= \charX^l + \Delta t \charV^{l+1},  \label{eq:semexEulerXlp} \\ 
            \charV^{l+1} &= \charV^l - \rho^l (\Delta t)^3 \cB \cB^* V^l - \Delta t (I - \rho^l (\Delta t)^3 \cB \cB^* ) (\Psiop(\charX^l) \charV^l + \nabla \Pot(\charX^l) ) \nonumber \\
            &\quad
            + \Delta t (I - \rho^l (\Delta t)^3 \cB \cB^* ) \cB u_0^l   
             - \rho^l (\Delta t)^4 \cB \cB^* \nabla \Pot_{\text{des}}(\charX_0^{l+1} )
            \label{eq:semexEulerVlp} \, .
            \end{align}
        \end{subequations}
    \end{Prop}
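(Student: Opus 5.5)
The proof is a direct computation following the steps of Algorithm \ref{ICA}. The first two updates \eqref{eq:semexEulerX0lp}--\eqref{eq:semexEulerV0lp} need no argument: they are the semi-explicit Euler scheme \eqref{eq:semi-explEuler0} evaluated at the initial guess $u_0^l$.

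The substantive step is the gradient of the reduced cost. Write
\begin{align*}
f_d^l(u) = \Delta t\, H_{\text{des}}(\charX^{l+1}(u), \charV^{l+1}(u)).
\end{align*}
By \eqref{eq:semi-explEuler0}, $u \mapsto \charV^{l+1}(u)$ is affine with derivative $\Delta t\, \cB$. Then $u\mapsto \charX^{l+1}(u) = \charX^l + \Delta t\, \charV^{l+1}(u)$ is affine with derivative $(\Delta t)^2 \cB$. The quantities $\charX^l$, $\charV^l$, $\Psiop(\charX^l)$ and $\nabla\Pot(\charX^l)$ do not depend on $u$, which is exactly what the semi-explicit structure provides.

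Since $\Pot_{\text{des}}\in C^1$, the chain rule gives the Fréchet derivative in a direction $h \in U$:
\begin{align*}
Df_d^l(u)h = \Delta t\big( \langle \charV^{l+1}(u), \Delta t\, \cB h\rangle + \langle \nabla \Pot_{\text{des}}(\charX^{l+1}(u)), (\Delta t)^2 \cB h\rangle\big).
\end{align*}
Moving $\cB$ to the other side via its adjoint $\cB^*$ and applying the Riesz representation in $U$ yields
\begin{align*}
\nabla f_d^l(u_0^l) = (\Delta t)^2 \cB^*\big(\charV_0^{l+1} + \Delta t \nabla\Pot_{\text{des}}(\charX_0^{l+1})\big).
\end{align*}

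Next, I substitute \eqref{eq:semexEulerV0lp} for $\charV_0^{l+1}$ into the gradient step $u^l = u_0^l - \rho^l \nabla f_d^l(u_0^l)$. Expanding gives \eqref{eq:sem1EuleruUpd}, with
\begin{itemize}
\item the term $-\rho^l(\Delta t)^3\cB^*\cB u_0^l$ combining with $u_0^l$ into $(I-\rho^l(\Delta t)^3\cB^*\cB)u_0^l$;
\item the terms $-\rho^l(\Delta t)^2\cB^*\charV^l$ and $+\rho^l(\Delta t)^3\cB^*(\Psiop(\charX^l)\charV^l+\nabla\Pot(\charX^l))$;
\item the potential term $-\rho^l(\Delta t)^3\cB^*\nabla\Pot_{\text{des}}(\charX_0^{l+1})$.
\end{itemize}

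Finally, I insert $u^l$ into the Euler step \eqref{eq:semi-explEuler0} to get \eqref{eq:semexEulerXlp}. Multiplying \eqref{eq:sem1EuleruUpd} by $\Delta t\,\cB$ and grouping terms gives \eqref{eq:semexEulerVlp}:
\begin{itemize}
\item the $\Psiop(\charX^l)\charV^l+\nabla\Pot(\charX^l)$ terms, $-\Delta t(\ldots)$ and $+\rho^l(\Delta t)^4\cB\cB^*(\ldots)$, combine into $-\Delta t(I-\rho^l(\Delta t)^3\cB\cB^*)(\ldots)$;
\item the control term is written as $\Delta t(I-\rho^l(\Delta t)^3\cB\cB^*)\cB u_0^l$, using $\cB(I-\rho\cB^*\cB)=(I-\rho\cB\cB^*)\cB$.
\end{itemize}

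I expect no real obstacle. The only step needing care is the gradient computation: the differentiability of $f_d^l$ must be justified, which follows from $\Pot_{\text{des}}\in C^1$ composed with an affine map. Beyond that, the powers of $\Delta t$ just need to be tracked correctly.
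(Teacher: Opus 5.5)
Your proposal is correct: the gradient, the expansion of $u^l$, and the regrouping via $\cB(I-\rho^l(\Delta t)^3\cB^*\cB)=(I-\rho^l(\Delta t)^3\cB\cB^*)\cB$ all check out. However, you compute the gradient by a different route than the paper.

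You differentiate the explicit, affine control-to-state map $u\mapsto(\charX^{l+1}(u),\charV^{l+1}(u))$ directly by the chain rule, then move $\Delta t\,\cB$ and $(\Delta t)^2\cB$ to the other side with $\cB^*$.

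The paper instead writes the Euler step as a constraint $e(\charX,\charV,u)=0$. It solves the adjoint equation $e_{(\charX,\charV)}^*p^l=\nabla_{(\charX,\charV)}J_{c,d}$, where the operator is the triangular $\begin{pmatrix} I & -\Delta t I\\ 0 & I\end{pmatrix}^*$. This gives $p_\charV^l=\Delta t(\charV_0^{l+1}+\Delta t\nabla\Pot_{\text{des}}(\charX_0^{l+1}))$, and then $\nabla f_d^l(u_0^l)=-e_u^*p^l=\Delta t\,\cB^*p_\charV^l$.

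Here the two computations are equivalent. The adjoint calculus is just the chain rule routed through the implicit-function formulation, and $p_\charV^l$ is exactly the vector you pair with $\Delta t\,\cB h$ in your directional derivative.

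Your version is more elementary and needs no adjoint state, but it depends on the scheme being explicit in $u$. The paper's Lagrangian formulation is the standard one in instantaneous control. It carries over unchanged to implicit or other integrators $\Phi$, where the control-to-state map is only defined implicitly and a direct chain-rule computation would be less convenient.
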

    \begin{proof}
        The semi-explicit Euler scheme \eqref{eq:semi-explEuler0} can be written in the form $e(X^{l+1}, V^{l+1}, u^{l}) = 0$, where $e \colon \HS \times \HS \times U \to \HS \times \HS$ is given by 
        \begin{align*}
            e(\charX,\charV,u) = \begin{pmatrix}
                \charX - \charX^l - \Delta t V \\ 
                \charV - \charV^l + \Delta t (\Psiop(\charX^l) \charV^l + \nabla \Pot(\charX^l)) - \Delta t \cB u
            \end{pmatrix}
            \, .
        \end{align*}
        Hence the defining equation for the adjoint state $p^l = (p_\charX^l, p_\charV^l) \in \HS \times \HS$ reads 
        \begin{align*}
            \begin{pmatrix}
                I & -\Delta tI \\
                0 & I 
            \end{pmatrix}^*
            \begin{pmatrix}
                p_X^l \\
                p_V^l
            \end{pmatrix}
            =
            \begin{pmatrix}
                \nabla_X J_{c,d}( \charX^{l+1}_0, \charV^{l+1}_0, u_0^l ) \\
                \nabla_V J_{c,d}( \charX^{l+1}_0, \charV^{l+1}_0, u_0^l )
            \end{pmatrix}
            =
            \Delta t 
            \begin{pmatrix}
                \nabla \Pot_{\text{des}}(\charX^{l+1}_0 ) \\
                \charV^{l+1}_0
            \end{pmatrix} \, ,
        \end{align*}
        where $\charX_0^{l+1}, \charV_0^{l+1}$ are defined by \eqref{eq:semexEulerX0lp} and \eqref{eq:semexEulerV0lp}. Explicitly, we obtain for $p^l$ 
        \begin{align*}
            \begin{pmatrix}
                p_X^l \\
                p_V^l
            \end{pmatrix}
            = \Delta t \begin{pmatrix}
                \nabla \Pot_{\text{des}}(\charX^{l+1}_0)  \\
                V^{l+1}_0 + \Delta t \nabla \Pot_{\text{des}}(\charX^{l+1}_0) 
            \end{pmatrix}  .
        \end{align*}
        Hence, we obtain the gradient identity 
        \begin{align*}
            \nabla f_d^l(u^l_0) = 
            - e_u (\charX_0^{l+1}, \charV_0^{l+1}, u_0^l )^* p^l 
             = (\Delta t)^2 \cB^* (\charV^{l+1}_0 + \Delta t \nabla \Pot_{\text{des}}(\charX^{l+1}_0) ) \, . 
        \end{align*}
        Therefore, a gradient step with stepsize $\rho^l > 0$ yields the control update \eqref{eq:sem1EuleruUpd} and the new position and velocity coordinates \eqref{eq:semexEulerXlp} and \eqref{eq:semexEulerVlp}. 
    \end{proof}

\section{IDA-PBC interpretation of the MPC feedback}\label{sec:IDAPBC}

    Now that we have computed the updates $\charX^{l+1}$ and $\charV^{l+1}$ for the semi-explicit Euler scheme, it remains to design the initial control guess $u_0^l$ in a suitable way to obtain a feedback law $\alpha = \alpha (\Delta t, \rho, \charX^l, \charV^l)$ such that the corresponding continuous time closed-loop system admits a port-Hamiltonian formulation. In the following lemma, we provide a sufficient condition which ensures that the resulting closed-loop system corresponding to a feedback $\alpha$ of the form
    \begin{align}
        \alpha (\charX, \charV) = - c_1 \cB^* \charV + c_2 \cB^* (\Psiop(\charX) \charV + \nabla \Pot(\charX) ) - F(\charX) \label{eq:genIDAPBCfeedback}
    \end{align}
    admits a port-Hamiltonian formulation.
     \begin{Def}
         Let $\Theta$ be a Hilbert space and $S_1,S_2 \in \mathscr{L}(\Theta)$. We write $S_1 \geq S_2$ if $S_1 - S_2$ is self-adjoint and $\langle w, (S_1 - S_2) w \rangle \geq 0$ for all $w \in \Theta$.
     \end{Def}
    \begin{Lemma}\label{lemma:IDAPBC}
        Let $c_1 \geq 0$ and $c_2 \in \R$ with $\abs{c_2} \norm{\cB \cB^*} < 1$, that is $T \coloneq I - c_2 \cB \cB^*$ is invertible, self-adjoint and $T \geq (1 - \abs{c_2} \norm{\cB \cB^*}) I$. Moreover, let $F \in C(\HS; U)$ and assume that there exists $\hat{\Pot} \in C^1(\HS ; \R )$ such that 
        \begin{align}
            \cB F(\charX) = T\nabla \hat{\Pot}(\charX) \text{ for all } \charX \in \HS \, . \label{eq:Gradmatching}
        \end{align}
         We apply the feedback law $u(t) = \alpha(\charX_t, \charV_t)$ to the system \eqref{eq:controlChar}, where $\alpha$ is given by \eqref{eq:genIDAPBCfeedback}.
        Then the closed-loop system reads
        \begin{align}\label{eq:nonPHclosedloop}
            \dx{t} \begin{pmatrix}
                \charX_t \\ 
                \charV_t 
            \end{pmatrix}
            &=
            \begin{pmatrix}
                \charV_t \\
                  - c_1 \cB \cB^* \charV_t - T ( \Psiop(\charX_t) \charV_t + \nabla \Pot(\charX_t) + \nabla \hat{\Pot}(\charX_t) )
            \end{pmatrix} \, , 
            \end{align}
        with initial conditions $ \charX(0) = \charX_{0}, \, \charV(0) = \charV_{0}$.
        Moreover, \eqref{eq:nonPHclosedloop} admits a port-Hamiltonian formulation. Concretely, we define the closed-loop Hamiltonian as 
        \begin{align}
            H_{\text{cl}} \colon \HS \times \HS \to \R , \ H_{\text{cl}}(\charX, \charV) = \frac{1}{2} \langle \charV, T^{-1} \charV \rangle + \Pot(\charX) + \hat{\Pot}(\charX)  \, . \label{eq:genHcl}
        \end{align}
        Then $\charX, \charV \in C^1([0,\infty) ; \HS)$ satisfy \eqref{eq:nonPHclosedloop} if and only if
        \begin{align}
            \dx{t} \begin{pmatrix}
                \charX_t \\ 
                \charV_t 
            \end{pmatrix}
             &= \left( 
            \begin{pmatrix}
                0 & T \\
                -T & 0
            \end{pmatrix}
            - \begin{pmatrix}
                0 & 0\\
                0& T \Psiop(\charX_t) T + c_1 T^{\frac{1}{2}} \cB \cB^* T^{\frac{1}{2}} 
            \end{pmatrix}
             \right) 
            \nabla H_{\text{cl}}(\charX_t, \charV_t) \, , \label{eq:PHclosedloop}
            \end{align}
        with initial conditions $\charX (0) = \charX_{0}$ and $\charV(0) = \charV_{0}$.
    \end{Lemma}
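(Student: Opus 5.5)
Two things have to be verified: the preliminary facts about $T$, and then the two claims (the explicit closed-loop system and the equivalence with the port-Hamiltonian form).

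\emph{Properties of $T$.} Since $\cB\cB^*$ is self-adjoint and positive semidefinite with $\langle w,\cB\cB^* w\rangle \le \norm{\cB\cB^*}\norm{w}^2$, we get for all $w$
\begin{align*}
\langle w, T w\rangle \ge (1-\abs{c_2}\norm{\cB\cB^*})\norm{w}^2 .
\end{align*}
So $T$ is self-adjoint and coercive, hence invertible by Lax--Milgram (or a Neumann series, since $\norm{c_2\cB\cB^*}<1$). Being self-adjoint and positive definite, $T$ has a self-adjoint positive square root $T^{1/2}$ and a bounded inverse $T^{-1}$, both self-adjoint. In particular $H_{\text{cl}}$ is $C^1$, with
\begin{align*}
\nabla H_{\text{cl}}(\charX,\charV) = \bigl(\nabla\Pot(\charX)+\nabla\hat\Pot(\charX),\; T^{-1}\charV\bigr).
\end{align*}
Here we use that the derivative of $\tfrac12\langle \charV,T^{-1}\charV\rangle$ is $T^{-1}\charV$ because $T^{-1}$ is self-adjoint.

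\emph{Closed-loop form \eqref{eq:nonPHclosedloop}.} Substitute $u=\alpha(\charX,\charV)$ from \eqref{eq:genIDAPBCfeedback} into \eqref{eq:controlCharV}. This gives
\begin{align*}
\dx{t}\charV = -c_1\cB\cB^*\charV - (I - c_2\cB\cB^*)\bigl(\Psiop(\charX)\charV+\nabla\Pot(\charX)\bigr) - \cB F(\charX).
\end{align*}
Applying the matching condition \eqref{eq:Gradmatching}, $\cB F(\charX)=T\nabla\hat\Pot(\charX)$, yields exactly the $\charV$-equation of \eqref{eq:nonPHclosedloop}. The $\charX$-equation is unchanged.

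\emph{Equivalence with \eqref{eq:PHclosedloop}.} Insert the gradient formula into the right-hand side of \eqref{eq:PHclosedloop}.
\begin{itemize}[leftmargin=2em]
\item First component: $T\,T^{-1}\charV = \charV$.
\item Second component:
\begin{align*}
-T(\nabla\Pot+\nabla\hat\Pot) - T\Psiop(\charX)T\,T^{-1}\charV - c_1T^{1/2}\cB\cB^*T^{1/2}T^{-1}\charV .
\end{align*}
The first two terms give $-T(\Psiop\charV+\nabla\Pot+\nabla\hat\Pot)$, as required. The last term is $-c_1 T^{1/2}\cB\cB^*T^{-1/2}\charV$.
\end{itemize}

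\emph{Main obstacle: the damping term.} For the equivalence to hold we need $-c_1T^{1/2}\cB\cB^*T^{-1/2}\charV = -c_1\cB\cB^*\charV$, i.e.\ $T^{1/2}$ must commute with $\cB\cB^*$. This is true because $T = I - c_2\cB\cB^*$ is a function of the self-adjoint operator $\cB\cB^*$. By the continuous functional calculus, $T^{1/2} = g(\cB\cB^*)$ with $g(s)=(1-c_2 s)^{1/2}$, which is continuous and positive on $\sigma(\cB\cB^*)\subseteq[0,\norm{\cB\cB^*}]$. Hence $T^{1/2}$ and $T^{-1/2}$ commute with $\cB\cB^*$. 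Alternatively, $T^{1/2}$ is the norm limit of polynomials in $T$, and every polynomial in $T$ commutes with $\cB\cB^*$.

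With this, the two right-hand sides agree pointwise for every $(\charX,\charV)$. Therefore a $C^1$ pair solves one system if and only if it solves the other, with the same initial data.

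\emph{Port-Hamiltonian structure.} It remains to check that the interconnection and damping operators have the required form.
\begin{itemize}[leftmargin=2em]
\item The block $\begin{pmatrix}0&T\\-T&0\end{pmatrix}$ is skew-adjoint on $\HS\times\HS$, because $T$ is self-adjoint.
\item $T\Psiop(\charX)T$ is self-adjoint and positive semidefinite, since $\langle w,T\Psiop T w\rangle=\langle Tw,\Psiop(Tw)\rangle\ge0$.
\item $c_1T^{1/2}\cB\cB^*T^{1/2}$ is self-adjoint with $\langle w, c_1T^{1/2}\cB\cB^*T^{1/2}w\rangle = c_1\norm{\cB^*T^{1/2}w}^2\ge0$, using $c_1\ge0$.
\end{itemize}
So the dissipation block is positive semidefinite, and \eqref{eq:PHclosedloop} is a genuine port-Hamiltonian formulation.
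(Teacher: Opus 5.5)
Your proof is correct and follows essentially the same route as the paper. You substitute the feedback and use the matching condition \eqref{eq:Gradmatching} to obtain \eqref{eq:nonPHclosedloop}, then reduce the equivalence with \eqref{eq:PHclosedloop} to the fact that $T^{1/2}$ commutes with $\cB\cB^*$, which holds by functional calculus. Your extra checks (coercivity and invertibility of $T$, the gradient of $H_{\text{cl}}$, skew-adjointness of the interconnection block, and positive semidefiniteness of the dissipation block) are details the paper leaves implicit.
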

    \begin{proof}
        It is a direct computation using \eqref{eq:Gradmatching} to show that the closed-loop system is given by \eqref{eq:nonPHclosedloop}. In order to obtain \eqref{eq:PHclosedloop} from \eqref{eq:nonPHclosedloop}, we just note that by definition of $T$, $\cB \cB^*$ and $T$ commute. Therefore, also $T^{\frac{1}{2}}$ and $\cB \cB^*$ commute (see \cite[Theorem $2.2$, item (c)]{Conway}), which implies that 
        \begin{align*}
            c_1  T^{\frac{1}{2}} \cB \cB^* T^{\frac{1}{2}} T^{-1} \charV = c_1 \cB \cB^* T T^{-1} \charV = c_1 \cB \cB^* \charV \text{ for all } \charV \in \HS \, . 
        \end{align*}
        Hence, the right-hand sides of \eqref{eq:nonPHclosedloop} and \eqref{eq:PHclosedloop} are equal.
    \end{proof}
    \begin{Rem}
        Condition \eqref{eq:Gradmatching} states that the function $\charX \mapsto T^{-1} \cB F(\charX)$ is a gradient field. Moreover, the stationary points of \eqref{eq:PHclosedloop} are all pairs $(\charX, 0) \in \HS \times \HS$ satisfying $\nabla \Pot(\charX) + \nabla \hat{\Pot}(\charX) = 0$. In particular, $(\Xeq, 0)$ is a stationary point of \eqref{eq:PHclosedloop} if and only if  
            $\nabla \Pot(\Xeq) + \nabla \hat{\Pot} (\Xeq)  = 0$.
    \end{Rem}
    Subsequently, we provide examples of functions $F$ and $\hat{\Pot}$ satisfying condition \eqref{eq:Gradmatching}.
    \begin{Ex}
        Let $c_1, c_2$ be as in Lemma \ref{lemma:IDAPBC}. Clearly, for $F = 0$, we can choose $\hat{\Pot} = 0$. If $F$ is of the form $F(\charX) = c_3 \cB^* (\charX - \Xeq) $ for $c_3 \in \R$, we define 
        \begin{align}
            \hat{\Pot}(\charX) = \frac{c_3}{2} \norm{ \cB^* T^{-\frac{1}{2}}(\charX - \Xeq) }^2 \, . \label{eq:FhatV}
        \end{align}
        Then \eqref{eq:Gradmatching} is satisfied since we have 
        \begin{align*}
            \cB F(\charX)  = c_3 T T^{-1} \cB \cB^* (\charX - \Xeq) 
            = c_3 T T^{-\frac{1}{2} } \cB \cB^* T^{-\frac{1}{2} } (\charX - \Xeq) = T \nabla \hat{\Pot}(\charX) \, .
        \end{align*}
        In the second step, we have used that $T^{-\frac{1}{2} }$ commutes with $\cB \cB^*$.
    \end{Ex}
    In the previous example, $(\Xeq, 0)$ is a stationary point of \eqref{eq:PHclosedloop} if and only if $\nabla \Pot(\Xeq) = 0$. However, if $\nabla \Pot(\Xeq) \neq 0$, then we can enforce stationarity of $(\Xeq, 0)$ as follows: 
    \begin{Ex}
        Let $c_1, c_2$ be as in Lemma \ref{lemma:IDAPBC} and $c_3 \in \R$. Moreover, we assume that $\cB \hat{\alpha} = \nabla \Pot(\Xeq)$ for some $\hat{\alpha} \in U$. Consider $F(\charX) = - \hat{\alpha} + c_2 \cB^* \nabla \Pot (\Xeq)$. Then \eqref{eq:Gradmatching} holds for $\hat{\Pot}(\charX) = - \langle \nabla \Pot(\Xeq) , \charX - \Xeq\rangle $, since 
        \begin{align*}
            \cB F(\charX) = - \nabla \Pot(\Xeq) + c_2 \cB \cB^* \nabla \Pot(\Xeq) = - T \nabla \Pot(\Xeq) = T \nabla \hat{\Pot}(\charX) \, .
        \end{align*}
        Similarly, if $F$ is given by 
             $F(\charX) = c_3 \cB^* (\charX - \Xeq) - \hat{\alpha} + c_2 \cB^* \nabla \Pot (\Xeq)$,
        then we can choose 
        \begin{align}
            \hat{\Pot}(\charX) = \frac{c_3}{2} \norm{ \cB^* T^{-\frac{1}{2}}(\charX - \Xeq) }^2 - \langle \nabla \Pot(\Xeq) , \charX - \Xeq \rangle \, . \label{eq:statadFhatV}
        \end{align}
        Indeed, this yields
        \begin{align*}
            \cB F(\charX) = c_3 T T^{-\frac{1}{2} } \cB \cB^* T^{-\frac{1}{2} } (\charX - \Xeq) 
            - T \nabla \Pot (\Xeq) 
            = T \nabla \hat{\Pot}(\charX) \, .
        \end{align*}
    \end{Ex}    
    Since we have verified the gradient condition \eqref{eq:Gradmatching} in the previous examples, combining Proposition \ref{prop:semex} with Lemma \ref{lemma:IDAPBC} yields the following IDA-PBC interpretations.
        
        \begin{Cor}[Instantaneous control for the semi-explicit Euler scheme and quadratic desired potential]\label{cor:semexquadvdes}
            Let the cost be given by \eqref{eq:Jcd} with a quadratic desired potential $\Pot_{\text{des}} (\charX) = c/{2} \lVert \charX - \Xeq \big\rVert^2$ for some $c \geq 0$. Consider the semi-explicit Euler scheme \eqref{eq:semi-explEuler0},
        fix $\Delta t > 0$ and a constant gradient step size $\rho^l = \rho > 0, \, l \in \N_0$.
        \thmpart Let $u_0^l = 0$ for all $l \in \N_0$. Then the feedback \eqref{eq:abstrfeedb} is of the form \eqref{eq:genIDAPBCfeedback} with
            \begin{align} 
                c_1 &\coloneq \rho (\Delta t)^2 \big(1 +c(\Delta t)^2 \big) , \ c_2 \coloneq \rho (\Delta t)^3 \big(1+c(\Delta t)^2 \big), \
                c_3 \coloneq c \rho (\Delta t)^3, \
                F(\charX) \coloneq c_3 \cB^* (\charX - \Xeq)  , \label{eq:c1c3semex}
            \end{align}
            and therefore Algorithm \ref{Alg:instcontr} computes the semi-explicit Euler iterates with initial condition $\charX (0) = \charX_{0}$, $ \charV (0) = \charV_{0}$ and step-size $\Delta t$ of the continuous time system 
            \begin{subequations}\label{eq:semiexpleulerconttime}
            \begin{align}
                \dx{t} \charX_t &= \charV_t \, , \\
                \dx{t}
                    \charV_t
                 &= 
                 - c_1 \cB \cB^* \charV_t - (I - c_2 \cB \cB^* ) (\Psiop(\charX_t) \charV_t + \nabla \Pot (\charX_t) ) 
                - c_3 \cB \cB^* (\charX_t - \Xeq)  
                .
                \end{align}
            \end{subequations}
            In particular, if $c_2 \norm{\cB \cB^*} < 1$, then Lemma \ref{lemma:IDAPBC} applies with
                \begin{align}
                \hat{\Pot}(\charX) =  \frac{c_3}{2} \norm{ \cB^* T^{-\frac{1}{2}}(\charX - \Xeq) }^2 \, . \label{eq:semiexnonstatad}
                \end{align}
            \thmpart Suppose that $\nabla \Pot (\Xeq) \in \range (\cB)$, that is $\cB \hat{\alpha } = \nabla \Pot (\Xeq)$ for some $\hat{\alpha} \in U$. For $l \in \N_0$, take $u_0^l = \hat{\alpha}$. Then the feedback \eqref{eq:abstrfeedb} is of the form \eqref{eq:genIDAPBCfeedback} with
            \begin{align}
                c_1 &\coloneq \rho (\Delta t)^2 \big(1+c(\Delta t)^2 \big) , \ c_2 \coloneq \rho (\Delta t)^3 \big(1+c(\Delta t)^2 \big), 
                c_3 \coloneq c \rho (\Delta t)^3, \nonumber \\
                F(\charX) &\coloneq c_3 \cB^* (\charX - \Xeq) - \hat{\alpha} + c_2 \cB^* \nabla \Pot (\Xeq) \label{eq:c1c2c3semex} \, , \nonumber
            \end{align}
            and therefore Algorithm \ref{Alg:instcontr} computes the semi-explicit Euler iterates with initial condition $\charX(0) = \charX_{0}, \ \charV(0) = \charV_{0}$ and step-size $\Delta t$ of the continuous time system             \begin{subequations}\label{eq:semiexpleulerconttimestatad}
            \begin{align}
                \dx{t} \charX_t &= \charV_t, \\
                \dx{t} \charV_t &= - c_1 \cB \cB^* \charV_t - (I - c_2 \cB \cB^* ) (\Psiop(\charX_t) \charV_t + \nabla \Pot (\charX_t) - \nabla \Pot(\Xeq) )  
                - c_3 \cB \cB^* (\charX_t - \Xeq) \, . 
                \end{align}
            \end{subequations}
            In particular, if $c_2 \norm{\cB \cB^*} < 1$, then Lemma \ref{lemma:IDAPBC} applies with 
            \begin{align}
                \hat{\Pot}(\charX) =  \frac{c_3}{2} \norm{ \cB^* T^{-\frac{1}{2}}(\charX - \Xeq) }^2
                - \langle \nabla \Pot(\Xeq), \charX - \Xeq \rangle \, . \label{eq:semiexstatad}
            \end{align}
        \end{Cor}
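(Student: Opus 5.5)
The proof is a direct substitution into the explicit updates of Proposition \ref{prop:semex}, followed by coefficient matching and an appeal to Lemma \ref{lemma:IDAPBC} and the two examples verifying \eqref{eq:Gradmatching}. Throughout, fix $\rho^l=\rho$ and write $G^l \coloneq \Psiop(\charX^l)\charV^l+\nabla\Pot(\charX^l)$. For the quadratic potential, $\nabla\Pot_{\text{des}}(\charX)=c(\charX-\Xeq)$. By \eqref{eq:semexEulerX0lp} and \eqref{eq:semexEulerV0lp},
\begin{align*}
\charX_0^{l+1}-\Xeq = (\charX^l-\Xeq) + \Delta t\,\charV^l - (\Delta t)^2 G^l + (\Delta t)^2\cB u_0^l .
\end{align*}
I will insert this into the control update \eqref{eq:sem1EuleruUpd} and collect the coefficients of $\cB^*\charV^l$, $\cB^*G^l$, $\cB^*(\charX^l-\Xeq)$ and of the $u_0^l$-terms.

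For part (i), with $u_0^l=0$, the update becomes
\begin{align*}
u^l = -\rho(\Delta t)^2\cB^*\charV^l + \rho(\Delta t)^3\cB^*G^l - c\rho(\Delta t)^3\cB^*\big((\charX^l-\Xeq)+\Delta t\,\charV^l-(\Delta t)^2G^l\big).
\end{align*}
Grouping terms gives the coefficients $-\rho(\Delta t)^2(1+c(\Delta t)^2)$ in front of $\cB^*\charV^l$, $\rho(\Delta t)^3(1+c(\Delta t)^2)$ in front of $\cB^*G^l$, and $-c\rho(\Delta t)^3$ in front of $\cB^*(\charX^l-\Xeq)$. These are exactly $c_1,c_2,c_3$ and $F$ from \eqref{eq:c1c3semex} in the form \eqref{eq:genIDAPBCfeedback}.

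Next I observe that the semi-explicit Euler step \eqref{eq:semi-explEuler0} with $u^l=\alpha(\charX^l,\charV^l)$ is, by definition, the semi-explicit Euler step for \eqref{eq:controlChar} with the feedback $u(t)=\alpha(\charX_t,\charV_t)$. Expanding $\cB\alpha$ gives \eqref{eq:semiexpleulerconttime}, which is consistent with \eqref{eq:semexEulerVlp}. The port-Hamiltonian claim then follows from Lemma \ref{lemma:IDAPBC}, once $c_2\norm{\cB\cB^*}<1$ (note $c_1,c_2\ge 0$), together with the first example after that lemma, which verifies \eqref{eq:Gradmatching} for $\hat{\Pot}$ as in \eqref{eq:semiexnonstatad}.

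For part (ii), linearity lets me add to the part (i) feedback the contributions of $u_0^l=\hat\alpha$. The direct term is $(I-\rho(\Delta t)^3\cB^*\cB)\hat\alpha$. The term coming through $\charX_0^{l+1}$ is $-c\rho(\Delta t)^5\cB^*\cB\hat\alpha$. Together they give
\begin{align*}
\hat\alpha - c_2\cB^*\cB\hat\alpha = \hat\alpha - c_2\cB^*\nabla\Pot(\Xeq),
\end{align*}
which equals $-F$ minus its linear part, so $F$ has the stated form. Applying $\cB$ produces $\cB\hat\alpha - c_2\cB\cB^*\nabla\Pot(\Xeq) = T\nabla\Pot(\Xeq)$, which yields \eqref{eq:semiexpleulerconttimestatad}. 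The IDA-PBC statement then follows from Lemma \ref{lemma:IDAPBC} combined with the second example, which verifies \eqref{eq:Gradmatching} for $\hat{\Pot}$ from \eqref{eq:statadFhatV}, that is \eqref{eq:semiexstatad}.

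The only real obstacle is bookkeeping. One must track the hidden dependence of $\charX_0^{l+1}$ on $\charV^l$, $G^l$ and $u_0^l$, which produces the $c(\Delta t)^2$ corrections inside $c_1$, $c_2$ and in the coefficient of $\hat\alpha$. One must also keep the ordering $\cB^*\cB$ on $U$ versus $\cB\cB^*$ on $\HS$ straight when passing from $u^l$ to $\cB u^l$.
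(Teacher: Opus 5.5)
Your proposal is correct and takes the paper's route. The paper obtains this corollary by substituting $\nabla \Pot_{\text{des}}(\charX) = c(\charX - \Xeq)$ and the respective choices of $u_0^l$ into the updates of Proposition \ref{prop:semex}, then invoking Lemma \ref{lemma:IDAPBC} together with the two examples that verify \eqref{eq:Gradmatching}. Your coefficient bookkeeping, including the $c(\Delta t)^2$ corrections coming through $\charX_0^{l+1}$ and the identity $\cB\hat{\alpha} - c_2 \cB\cB^*\nabla\Pot(\Xeq) = T\nabla\Pot(\Xeq)$, simply makes explicit the computation the paper leaves to the reader.
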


\section{Well-posedness and asymptotic behavior of the closed-loop system}\label{sec:WpandAs}

    Having established a port-Hamiltonian formulation of the closed-loop system obtained by the instantaneous rolling horizon controller, we turn our attention to the analytical properties of \eqref{eq:PHclosedloop}, namely well-posedness of the continuous time system and large-time asymptotic behavior both in discrete and continuous time. We start with well-posedness. For the Cucker--Smale model in Section \ref{sec:exCS}, the right-hand side of \eqref{eq:nonPHclosedloop} naturally satisfies a local Lipschitz condition with respect to the supremum norm rather than in the Hilbert space $L^2(\msolinit; \R^d)$ (see Proposition \ref{prop:PsiandPotprop} below). In the abstract setting, this motivates to formulate a well-posedness result on a continuously embedded subspace.   
    \begin{Thm}[Well-posedness of the system \eqref{eq:nonPHclosedloop} in a subspace]\label{Thm:wpcl}
        Let $c_1, c_2 \in \R $ and define $T \coloneq I - c_2 \cB \cB^*$. Let $(\wps, \norm{\cdot}_{\wps} )$ be a Banach space that embeds continuously into $\HS$ and suppose that $\wps$ is an invariant subspace of $\cB \cB^*$. For the potential $\hat{\Pot} \in C^1(\HS; \R ) $, we assume that: 
        \thmpart\label{ass:hatV} $T \nabla \hat{\Pot} (\charX) \in \wps$ for all $\charX \in \wps$ and the map 
                    $g \colon \wps \to \wps, \, \charX \mapsto T \nabla \hat{\Pot}(\charX)$
                is locally Lipschitz and satisfies a linear growth condition, that is, there exists $C > 0$ such that $\norm{g(\charX)}_{\wps} \leq C ( 1 + \norm{\charX }_{\wps} )$ for all $\charX \in \wps $.
            \thmpart $\Psiop (\charX)$ restricts to a bounded operator on $\wps$ for all $\charX \in \wps$ and $\Psiop(\cdot) \colon \wps \to \mathscr{L}(\wps)$ is locally Lipschitz and bounded. 
            \thmpart $\nabla \Pot(\charX) \in \wps$ for all $\charX \in \wps$ and $\nabla \Pot( \cdot) \colon \wps \to \wps$ is locally Lipschitz with respect to $\norm{\cdot}_{\wps}$ and there exists $C > 0$ such that $\norm{\nabla \Pot (\charX)}_{\wps} \leq C ( 1 + \norm{\charX }_{\wps} )$ for all $\charX \in \wps$. \par 
        Then for every $\charX_0, \charV_0 \in \wps$, there exists a unique solution $\charX, \charV \in C^1([0,\infty);\wps )$ to the system \eqref{eq:nonPHclosedloop}. Moreover, if $\abs{c_2} \norm{\cB \cB^*}_{\mathscr{L}(\HS )} < 1$, there exists a solution $\charX, \charV \in C^1([0,\infty) ; \HS) $ to the port-Hamiltonian system \eqref{eq:PHclosedloop}.
    \end{Thm}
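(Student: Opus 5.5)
We treat \eqref{eq:nonPHclosedloop} as an autonomous ODE $\dx{t} Z_t = G(Z_t)$ on the Banach space $\wps \times \wps$ with norm $\norm{(\charX,\charV)} \coloneq \norm{\charX}_{\wps} + \norm{\charV}_{\wps}$, where
\begin{align*}
    G(\charX,\charV) = \begin{pmatrix} \charV \\ - c_1 \cB\cB^* \charV - T\Psiop(\charX)\charV - T\nabla\Pot(\charX) - g(\charX) \end{pmatrix} .
\end{align*}
First we check that $G$ maps $\wps\times\wps$ into itself and is locally Lipschitz. Since $\wps$ is invariant under $\cB\cB^*$, it is invariant under $T = I - c_2\cB\cB^*$. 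The restriction of $\cB\cB^*$ to $\wps$ is bounded by the closed graph theorem: if $w_n \to w$ in $\wps$ and $\cB\cB^* w_n \to y$ in $\wps$, then both limits also hold in $\HS$ by the continuous embedding, so $y = \cB\cB^* w$. Hence $T|_{\wps} \in \mathscr{L}(\wps)$. The product $(\charX,\charV) \mapsto \Psiop(\charX)\charV$ is locally Lipschitz on $\wps\times\wps$, because
\begin{align*}
    \Psiop(\charX)\charV - \Psiop(\tilde\charX)\tilde\charV = (\Psiop(\charX)-\Psiop(\tilde\charX))\charV + \Psiop(\tilde\charX)(\charV-\tilde\charV)
\end{align*}
and $\Psiop$ is locally Lipschitz and bounded. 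Together with assumptions (i) and (iii), this shows $G$ is locally Lipschitz. The Picard--Lindelöf theorem in Banach spaces then gives a unique maximal solution $Z \in C^1([0,t^*);\wps\times\wps)$.

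The main step is to rule out blow-up, i.e.\ to show $t^* = \infty$, using linear growth. Set $M \coloneq \sup_{\charX \in \wps} \norm{\Psiop(\charX)}_{\mathscr{L}(\wps)} < \infty$. Then
\begin{align*}
    \norm{G(Z)} \leq \norm{\charV}_{\wps} + \big(|c_1|\norm{\cB\cB^*}_{\mathscr{L}(\wps)} + \norm{T}_{\mathscr{L}(\wps)} M\big)\norm{\charV}_{\wps} + \norm{T}_{\mathscr{L}(\wps)} C(1+\norm{\charX}_{\wps}) + C(1+\norm{\charX}_{\wps}),
\end{align*}
so $\norm{G(Z)} \leq K(1+\norm{Z})$ for some constant $K$. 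The integral form $Z_t = Z_0 + \int_0^t G(Z_s)\,\mathrm{d}s$ and Gronwall's inequality give $\norm{Z_t} \leq (1+\norm{Z_0})e^{Kt}$. This stays bounded on bounded intervals, which contradicts the standard blow-up alternative if $t^* < \infty$. Hence the solution is global, and uniqueness in $C^1([0,\infty);\wps)$ follows from the local Lipschitz property. This boundedness argument is where the real work is; the rest is bookkeeping. The only subtle points are that $\Psiop$ is required to be \emph{bounded}, not merely of linear growth, since it multiplies $\charV$, and that the boundedness of $\cB\cB^*$ on $\wps$ is needed for the estimate.

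Finally, suppose $\abs{c_2}\norm{\cB\cB^*}_{\mathscr{L}(\HS)} < 1$. Because $\wps$ embeds continuously into $\HS$, the solution also lies in $C^1([0,\infty);\HS)$, with the same derivative computed in $\HS$. Then $T$ is invertible, self-adjoint and positive on $\HS$, so Lemma \ref{lemma:IDAPBC} applies; note that its equivalence of \eqref{eq:nonPHclosedloop} and \eqref{eq:PHclosedloop} uses only the commutation of $T^{1/2}$ with $\cB\cB^*$, not the matching condition \eqref{eq:Gradmatching}. The solution therefore satisfies the port-Hamiltonian system \eqref{eq:PHclosedloop}.
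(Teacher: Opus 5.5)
Your proof is correct and follows the paper's argument step for step. The steps are: the closed graph theorem gives $\cB\cB^*|_{\wps}\in\mathscr{L}(\wps)$ and hence $T|_{\wps}\in\mathscr{L}(\wps)$; the right-hand side is locally Lipschitz with linear growth; Picard--Lindel\"of with Gronwall gives the global unique solution; and the continuous embedding combined with Lemma \ref{lemma:IDAPBC} gives \eqref{eq:PHclosedloop}. Your observation that this equivalence needs only the commutation of $T^{1/2}$ with $\cB\cB^*$, and not \eqref{eq:Gradmatching}, is a useful clarification that the paper leaves implicit; the same reasoning shows that the lemma's hypothesis $c_1\geq 0$ is not needed either.
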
    
    \begin{proof}
        Since $\wps $ is an invariant subspace of $\cB \cB^*$, the restriction $\cB \cB^* \mid_{\wps }$ is a linear map. Moreover, as $\wps $ is a Banach space and $\cB \cB^* \in \mathscr{L}(\HS )$, the closed graph theorem implies that $\cB \cB^* \mid_{\wps } \in \mathscr{L}(\wps)$. Thus, we have 
            $T \mid_{\wps } = I_{\wps } - c_2 \cB \cB^* \mid_{\wps } \in \mathscr{L}(\wps)$.
        Since $\Psiop$, $\nabla \Pot$ and $g$ are locally Lipschitz with respect to $\norm{\cdot}_{\wps}$, the right-hand side of \eqref{eq:nonPHclosedloop} is locally Lipschitz with respect to the norm on $\wps$. Similarly,  the right-hand side of \eqref{eq:nonPHclosedloop} satisfies a linear growth condition with respect to $\norm{\cdot}_{\wps}$. Hence, global existence and uniqueness of solutions $\charX, \charV \in C^1([0,\infty);\wps) $ follow from the Picard-Lindelöf theorem and Gronwall's inequality. To prove the addendum, we assume that $\abs{c_2} \norm{\cB \cB^*}_{\mathscr{L}(\HS )} < 1$. Then $T$ has a bounded inverse in $\mathscr{L}(\HS)$ and therefore the system \eqref{eq:PHclosedloop} is well-defined. Since the embedding $\wps \hookrightarrow \HS $ is continuous, we have $\charX, \charV \in C^1([0,\infty);\wps ) \subseteq C^1([0,\infty);\HS)$. According to Lemma \ref{lemma:IDAPBC}, $\charX, \charV$ are solutions to \eqref{eq:PHclosedloop}.
    \end{proof}
    \begin{Rem}
        Let the assumptions of Theorem \ref{Thm:wpcl} be satisfied. If the system \eqref{eq:PHclosedloop} is discretized using the semi-explicit Euler scheme, then the resulting iterates $\charX^l, \charV^l$ of the discrete time system lie in $\wps$ provided that $\charX_0, \charV_0 \in \wps$.
    \end{Rem}
    \begin{Ex}
        Let $c_2 \in \R$ with $\abs{c_2} \norm{\cB \cB^*}_{\mathscr{L}(\HS )} < 1$, that is $T \coloneq I - c_2 \cB \cB^*$ is invertible in $\mathscr{L}(\HS)$. Moreover, let $\wps$ be invariant under $\cB \cB^*$. Then Assumption \ref{ass:hatV} in Theorem \ref{Thm:wpcl} is satisfied if $\hat{\Pot}$ is given by \eqref{eq:FhatV} and $\Xeq \in \wps$. Indeed, as shown in the proof of Theorem \ref{Thm:wpcl}, $\cB \cB^*$ restricts to a bounded operator on $\wps$ and thus we have for
        \begin{align*}
            T \nabla \hat{\Pot}(\charX) = c_3 T T^{- \frac{1}{2}} \cB \cB^* T^{- \frac{1}{2}} (\charX - \Xeq) = c_3 \cB \cB^* (\charX - \Xeq), \, \charX \in \wps , 
        \end{align*}
        which is Lipschitz and thus has linear growth. If $T \nabla \Pot(\Xeq) \in \wps $ the same argument also applies if $\hat{\Pot}$ is given by \eqref{eq:statadFhatV}. 
    \end{Ex}
    Subsequently, we analyze the asymptotic behavior of solutions to the closed-loop system \eqref{eq:PHclosedloop}. Since the port-Hamiltonian formulation of \eqref{eq:PHclosedloop} uses the Hilbert space structure of $\HS$, we will restrict our attention to convergence in $\HS$ even if the system is well-posed in a continuously embedded subspace $\wps$. Fortunately, the following results only require the existence of a solution $\charX, \charV \in C^1([0,\infty) ;\HS )$. Our analysis is based on {B}arb\u alat's lemma, which we recall here. A proof can be found in \cite[Theorem $4$]{FarkasWegner}.
    \begin{Lemma}[{B}arb\u alat's lemma] \label{lemma:barbalat} 
    Let $\Theta$ be a Banach space and $g \in C^{1}([0, \infty ); \Theta)$ such that the limit $\lim\limits_{t \to \infty} g(t) = g_\infty$ exists in $\Theta$. If $g^\prime \in C([0,\infty) ; \Theta)$ is uniformly continuous, then $ g^\prime (t) \to  0 $ in $\Theta$ as $t \to \infty$.
    \end{Lemma}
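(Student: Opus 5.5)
The statement to prove is Barb\u{a}lat's lemma: if $g \in C^1([0,\infty);\Theta)$ converges to $g_\infty$ and $g'$ is uniformly continuous, then $g'(t) \to 0$. I will argue by contradiction, replacing the derivative by a difference quotient over a short window via the fundamental theorem of calculus for Banach-valued $C^1$ functions. Throughout, $\norm{\cdot}$ denotes the norm of $\Theta$.

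Suppose $g'(t) \not\to 0$. Then there exist $\varepsilon > 0$ and a sequence $t_n \to \infty$ with $\norm{g'(t_n)} \geq \varepsilon$ for all $n$. By uniform continuity of $g'$, choose $\delta > 0$ such that $\norm{g'(s) - g'(t)} \leq \varepsilon/2$ whenever $\abs{s-t} \leq \delta$.

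For each $n$, the fundamental theorem of calculus (valid for $C^1$ maps into a Banach space, with the Riemann integral) gives
\begin{align*}
g(t_n + \delta) - g(t_n) = \delta g'(t_n) + \int_{t_n}^{t_n+\delta} \big(g'(s) - g'(t_n)\big) \, \mathrm{d}s .
\end{align*}
The integral has norm at most $\delta \varepsilon/2$. By the reverse triangle inequality,
\begin{align*}
\norm{g(t_n+\delta) - g(t_n)} \geq \delta \varepsilon - \delta\varepsilon/2 = \delta\varepsilon/2 .
\end{align*}

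On the other hand, $g(t) \to g_\infty$ implies
\begin{align*}
\norm{g(t_n+\delta) - g(t_n)} \leq \norm{g(t_n+\delta) - g_\infty} + \norm{g_\infty - g(t_n)} \to 0 .
\end{align*}
This contradicts the uniform lower bound $\delta\varepsilon/2 > 0$, so $g'(t) \to 0$.

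The argument has no real obstacle. The only point needing care is the vector-valued fundamental theorem of calculus together with the estimate $\norm{\int h} \leq \int \norm{h}$. Both follow from the scalar case by applying bounded linear functionals and using Hahn--Banach, or directly from Riemann sums.
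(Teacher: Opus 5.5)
Your proof is correct and complete. The negation of $g'(t)\to 0$ and the choice of $\delta$ from uniform continuity are sound. So are the identity $g(t_n+\delta)-g(t_n)=\delta g'(t_n)+\int_{t_n}^{t_n+\delta}\bigl(g'(s)-g'(t_n)\bigr)\,\mathrm{d}s$ and the resulting lower bound $\delta\varepsilon/2$. The two Banach-space facts you use, the fundamental theorem of calculus for $C^1$ maps and $\norm{\int h}\leq\int\norm{h}$, are justified as you indicate.

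The paper gives no argument of its own here; it only cites \cite[Theorem $4$]{FarkasWegner}. Your proof therefore supplies what the paper outsources rather than competing with an in-paper route. It costs a few lines, and it makes explicit that only the vector-valued fundamental theorem of calculus is used, with no Hilbert structure.

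If you prefer to avoid the contradiction, the same estimate gives directly $\sup_{t\geq 0}\norm{h^{-1}\bigl(g(t+h)-g(t)\bigr)-g'(t)}\leq\omega(h)$, where $\omega$ is the modulus of continuity of $g'$. Each difference quotient tends to $0$ as $t\to\infty$, and $\omega(h)\to 0$ as $h\to 0$. Hence $g'$ is a uniform limit of functions vanishing at infinity, and so it vanishes at infinity itself.
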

    \begin{Thm} \label{thm:BarbConv}
        Let $c_1,c_2, F, \hat{\Pot}$ be as in Lemma \ref{lemma:IDAPBC} and $\Xeq \in \HS$. Let $\charX, \, \charV \in C^1([0,\infty) ; \HS )$ be solutions to the system \eqref{eq:PHclosedloop}. Consider the following assertions:
        \begin{enumerate}[label=(\roman*), leftmargin=2em]
            \item \label{ass:feedlowbound} The potential $ \Pot + \hat{\Pot} \in C^1(\HS ; \R) $ is bounded from below by a constant $\underline{\hat{\Pot}} \in \R$. 
            \item\label{ass:feedunifcont} $\nabla \Pot + \nabla \hat{\Pot} \in  C(\HS ; \HS )$ is uniformly continuous and the family $ (\nabla \Pot (\charX_t) + \nabla \hat{\Pot}(\charX_t) )_{t \geq 0} \subseteq \HS $ is bounded. 
            \item\label{ass:feedposdef} There exists $\nu > 0$ such that for all $Y\in \HS $, we have 
            \begin{align*}
                \Psiop(Y) + c_1 T^{- \frac{1}{2}} \cB \cB^* T^{- \frac{1}{2}} \geq \nu I \, .
            \end{align*}
            \item\label{ass:feedposconv} For every $\varepsilon > 0$, there exists $\delta > 0$ such that for all $\charX \in \HS$, we have 
            \begin{align}
                \norm{ \nabla \Pot(\charX) + \nabla \hat{\Pot}(\charX) } \leq \delta \ \Rightarrow \  \norm{\charX - \Xeq } \leq \varepsilon \, . \label{eq:Forceimpl}
            \end{align}
        \end{enumerate}
         If Assertions \ref{ass:feedlowbound}, \ref{ass:feedunifcont} and \ref{ass:feedposdef} are satisfied, then $\charV_t \to 0$ and $\nabla \Pot (\charX_t) + \nabla \hat{\Pot}(\charX_t) \to 0 $ in $\HS$ as $t \to \infty $. If additionally \ref{ass:feedposconv} is satisfied, then $\charX_t \to \Xeq$ in $\HS $ as $t \to \infty$. 
    \end{Thm}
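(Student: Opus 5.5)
The plan is to use the closed-loop Hamiltonian $H_{\text{cl}}$ from \eqref{eq:genHcl} as a Lyapunov function and then invoke Barb\u alat's Lemma \ref{lemma:barbalat} twice.

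\textbf{Energy dissipation.} Along solutions of \eqref{eq:PHclosedloop}, the skew part drops out, so
\begin{align*}
\dx{t} H_{\text{cl}}(\charX_t,\charV_t) = -\big\langle T^{-1}\charV_t, \big(T\Psiop(\charX_t)T + c_1 T^{\frac12}\cB\cB^* T^{\frac12}\big) T^{-1}\charV_t\big\rangle .
\end{align*}
Set $w_t = T^{-\frac12}\charV_t$. Since $T^{\frac12}$ commutes with $\cB\cB^*$, the right-hand side equals $-\langle T^{\frac12}w_t, (\Psiop(\charX_t) + c_1 T^{-\frac12}\cB\cB^*T^{-\frac12}) T^{\frac12} w_t\rangle$. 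By \ref{ass:feedposdef} this is at most $-\nu\norm{\charV_t}^2$.

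\textbf{Boundedness.} Because $T^{-1}\geq \norm{T}^{-1}I$ and \ref{ass:feedlowbound} holds, integrating the dissipation inequality gives two facts. First, $\charV$ is bounded. Second, $\nu\int_0^\infty\norm{\charV_t}^2\,\mathrm{d}t \leq H_{\text{cl}}(\charX_0,\charV_0)-\underline{\hat{\Pot}}<\infty$. Hence $H_{\text{cl}}(\charX_t,\charV_t)$ is nonincreasing and bounded below, so it converges.

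\textbf{Velocity convergence.} Let $h(t)=\norm{\charV_t}^2$. Then $h'(t) = 2\langle \charV_t, \dot\charV_t\rangle$. From \eqref{eq:nonPHclosedloop}, $\dot\charV_t$ is bounded, because $\Psiop$ is bounded, $\charV$ is bounded, and \ref{ass:feedunifcont} bounds the force term. So $h'$ is bounded, $h$ is uniformly continuous, and $h$ is integrable. Barb\u alat's lemma applied to $g(t)=\int_0^t h$ gives $\charV_t\to 0$.

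\textbf{Force convergence.} This step is the main obstacle. Apply Lemma \ref{lemma:barbalat} to $g=\charV$, whose limit $0$ now exists; we must show $\dot\charV$ is uniformly continuous. Write
\begin{align*}
\dot\charV_t = -c_1\cB\cB^*\charV_t - T\Psiop(\charX_t)\charV_t - T(\nabla\Pot+\nabla\hat\Pot)(\charX_t).
\end{align*}
\emph{Third term.} $\charX$ is Lipschitz in $t$ because $\dot\charX=\charV$ is bounded. Composing with the uniformly continuous map $\nabla\Pot+\nabla\hat\Pot$ from \ref{ass:feedunifcont} makes this term uniformly continuous.
\emph{First term.} It is Lipschitz in $t$, since $\dot\charV$ is bounded.
\emph{Middle term.} $\Psiop$ is only assumed bounded, not continuous, so uniform continuity is not directly available. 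The way around this is that $\Psiop(\charX_t)\charV_t\to 0$ anyway, because $\Psiop$ is bounded and $\charV_t\to0$. So I will not apply Barb\u alat to $\charV$ itself. Instead set
\begin{align*}
\tilde g(t)=\charV_t+\int_0^t \big(c_1\cB\cB^*\charV_s + T\Psiop(\charX_s)\charV_s\big)\,\mathrm{d}s .
\end{align*}
Then $\tilde g'(t)=-T(\nabla\Pot+\nabla\hat\Pot)(\charX_t)$, which is uniformly continuous. However, the integral need not converge, so $\tilde g$ may have no limit. The alternative is a direct $\varepsilon$-argument, which I will use. Suppose $\norm{f(t_n)}\geq\varepsilon$ along $t_n\to\infty$, where $f=T(\nabla\Pot+\nabla\hat\Pot)\circ\charX$. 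By uniform continuity of $f$ there is $\eta>0$ with $\norm{f(t)-f(t_n)}\leq\varepsilon/4$ on $[t_n,t_n+\eta]$. Integrating $\dot\charV$ over this interval gives
\begin{align*}
\norm{\charV_{t_n+\eta}-\charV_{t_n}} \geq \eta\,\tfrac{3\varepsilon}{4} - \eta\sup_{s\geq t_n}\norm{c_1\cB\cB^*\charV_s+T\Psiop(\charX_s)\charV_s}.
\end{align*}
The supremum tends to $0$ as $n\to\infty$, so for large $n$ the right-hand side stays bounded away from $0$. This contradicts $\charV_t\to0$. Hence $T(\nabla\Pot+\nabla\hat\Pot)(\charX_t)\to0$, and invertibility of $T$ gives the force convergence.

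\textbf{Position convergence.} Under \ref{ass:feedposconv}: given $\varepsilon>0$, take the corresponding $\delta$. For all large $t$ the force norm is at most $\delta$, hence $\norm{\charX_t-\Xeq}\leq\varepsilon$, so $\charX_t\to\Xeq$.
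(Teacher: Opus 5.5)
Your proof is correct and follows the paper's argument. It uses dissipation of $H_{\text{cl}}$ via (iii), boundedness and square-integrability of $\charV$, and Barb\u alat applied to $t\mapsto\int_0^t\norm{\charV_s}^2\,\mathrm{d}s$. It then has a second Barb\u alat-type step for the force and concludes with (iv).

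The only divergence is the force step, and there your worry about $\Psiop$ lacking continuity is unnecessary. The paper observes that $D(t) = -(T\Psiop(\charX_t) + c_1\cB\cB^*)\charV_t = \dx{t}\charV_t + T(\nabla\Pot(\charX_t) + \nabla\hat{\Pot}(\charX_t))$ is continuous, being a sum of continuous functions. Since $D(t)\to 0$, it is uniformly continuous, so Lemma~\ref{lemma:barbalat} applies directly to $g=\charV$. Your $\varepsilon$-contradiction argument is an inlined proof of Barb\u alat's lemma with a vanishing perturbation and is equally valid, just longer.
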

    \begin{proof}
        The strategy of the proof is similar to \cite{JaTo2024}. For the first part, let us assume that \ref{ass:feedlowbound}, \ref{ass:feedunifcont} and \ref{ass:feedposdef} are satisfied. From the port-Hamiltonian formulation of the closed-loop system \eqref{eq:PHclosedloop}, we see that the energy balance reads 
        \begin{align*}
            \dx{t} H_{\text{cl}} (\charX_t, \charV_t) = - \langle \charV_t, \big(\Psiop(\charX_t) + c_1 T^{-\frac{1}{2}} \cB \cB^* T^{-\frac{1}{2}} \big) \charV_t \rangle \leq - \nu \norm{\charV_t}^2 \overset{\ref{ass:feedposdef}}{\leq} 0 \, .
        \end{align*}
        Since $\Pot + \hat{\Pot}$ is bounded from below by a constant $\underline{\hat{\Pot}}$, we see that $H_{\text{cl}}(\charX, \charV) \geq \underline{\hat{\Pot}}$ for all $\charX, \charV \in \HS$. Hence, we conclude that the monotone limit 
           $ H_\infty \coloneq \lim_{t \to \infty} H_{\text{cl}}(\charX_t, \charV_t)$ 
        exists and 
        \begin{align}
            \int_0^\infty \norm{\charV_t}^2 \, \mathrm{d} t \leq \frac{1}{\nu} \big( H_{\text{cl}}(\charX_0, \charV_0) - H_{\infty} ) < \infty \, ,
        \end{align}
        that is, $\charV \in L^2([0,\infty) ; \HS )$. Moreover, since $H_{\text{cl}}$ is decreasing, we have for all $t \geq 0$
        \begin{align*}
            \norm{\charV_t}^2 \leq \norm{ T^\frac{1}{2}}^2 \langle \charV_t , T^{-1} \charV_t \rangle \leq 2 \norm{T} \big( H_{\text{cl}}(\charX_t, \charV_t) - \underline{\hat{\Pot}} \big) 
            \leq 2\norm{T} \big( H_{\text{cl}}(\charX_0, \charV_0) - \underline{\hat{\Pot}} \big) \, ,
        \end{align*}
        which shows that $(\charV_t)_{t \geq 0} \subseteq \HS$ is bounded. Therefore, $(\charX_t)_{t \geq 0}$ is Lipschitz. Moreover, since we have assumed that $(\nabla \Pot (\charX_t) + \nabla \hat{\Pot}(\charX_t) )_{t \geq 0} \subseteq \HS$ is bounded, we obtain that $(\norm{\charV_t}^2)_{t \geq 0}$ is Lipschitz. Indeed, we have 
        \begin{align*}
            \dx{t} \norm{\charV_t}^2 = 2 \langle \charV_t, - T(\nabla \Pot (\charX_t) + \nabla \hat{\Pot}(\charX_t) ) - (T \Psiop(\charX_t) + c_1 \cB \cB^* ) \charV_t \rangle 
        \end{align*}
        and the right-hand side is bounded since $\charV$ and $\Psiop$ are bounded. {B}arb\u alat's lemma~\ref{lemma:barbalat} implies that $\charV_t \stackrel{t \to \infty}{\longrightarrow} 0$ in $\HS$. As a next step, we claim that $\dx{t}\charV \in C([0,\infty ) ; \HS )$ is uniformly continuous. Indeed, we have 
        \begin{align}
            \dx{t} \charV_t &= - (T \Psiop(\charX_t) + c_1 \cB \cB^* ) \charV_t - T (\nabla \Pot(\charX_t) + \nabla \hat{\Pot}(\charX_t) ) 
            \eqcolon D(t) - T (\nabla \Pot(\charX_t) + \nabla \hat{\Pot}(\charX_t) ) \label{eq:dxvt} \, .
        \end{align}
        Note that continuity of $\charX $ and $\dx{t} \charV$ imply $D$ is continuous. Moreover, convergence of $\charV$ implies that $D$ converges to zero as $t \to \infty$ and thus $D$ is uniformly continuous. For the second term on the right-hand side of $\eqref{eq:dxvt}$, we use that $\nabla \Pot + \nabla \hat{\Pot}$ is uniformly continuous. Thus, for $\varepsilon > 0$, there exists $\delta > 0$ such that 
        \begin{align*}
            \norm{\nabla \Pot (Y) + \nabla \hat{\Pot}(Y) - (\nabla \Pot (Z) + \nabla \hat{\Pot}(Z) ) } < \varepsilon \, \text{whenever} \, Y, Z \in \HS \text{ with } \norm{Y - Z } < \delta .
        \end{align*}
        Now, for $s,t \geq 0$ with $\abs{t-s} < \frac{\delta}{\norm{\charV}_{\infty } } $, Lipschitz continuity of $\charX$ yields 
        \begin{align*}
            \norm{\charX_t - \charX_s} \leq \norm{\charV}_{\infty } \abs{t-s} < \delta , \, 
            \text{thus}  \norm{\nabla \Pot (\charX_t ) + \nabla \hat{\Pot}(\charX_t) - (\nabla \Pot (\charX_s) + \nabla \hat{\Pot}(\charX_s) ) } < \varepsilon  .
        \end{align*}
        Thus, we have shown that $\dx{t} \charV$ is uniformly continuous. Since $\charV_t \to 0$ as $t \to \infty$, another application of {B}arb\u alat's lemma \ref{lemma:barbalat} implies that 
        \begin{align*}
            0 = \lim\limits_{t \to \infty} \dx{t} \charV_t = - \lim\limits_{t \to \infty}  T (\nabla \Pot(\charX_t) + \nabla \hat{\Pot}(\charX_t) ) \, ,
        \end{align*}
        where the above limit is understood in $\HS$. Since $T$ has a bounded inverse, we conclude that $ \nabla \Pot(\charX_t) + \nabla \hat{\Pot}(\charX_t)  \stackrel{t \to \infty}{\to} 0$.
        To complete the proof, let us additionally assume that Assertion \ref{ass:feedposconv} is satisfied and fix $\varepsilon > 0$. Then there exists $\delta > 0$ such that the implication \eqref{eq:Forceimpl} holds. Moreover, we can find $t_0 > 0$ such that for all $t \geq t_0$, we have 
        \begin{align*}
            \norm{\nabla \Pot(\charX_t) + \nabla \hat{\Pot}(\charX_t)} \leq \delta \text{ and therefore } \norm{\charX_t - \Xeq} \leq \varepsilon \, . \qquad \qedhere
        \end{align*}
    \end{proof}
    \begin{Notation}
        Let $\Theta$ be a Hilbert space and $S \in \mathscr{L}(\Theta)$ self-adjoint. For a Borel-measurable subset $A \subseteq \R$, we write $\mathds{1}_A(S) \in \mathscr{L}(\Theta)$ for the spectral measure of $A$ associated to $S$, see \cite[Theorem $2.2$]{Conway}. In particular, for $\lambda \in \R$, $\mathds{1}_{\{ \lambda \} } (S)$ is the orthogonal projection onto the closed subspace $\ker(\lambda I -  S) \subseteq \Theta$, see \cite[Theorem $12.29$]{RudinFA}. 
    \end{Notation}
    \begin{Prop}\label{Prop:lowbound}
        Let $\Theta_1, \Theta_2, \Theta_3$ be Hilbert spaces, $M \in \mathscr{L}(\Theta_1; \Theta_3)$ with $M \neq 0$ and $ D \in \mathscr{L}(\Theta_2; \Theta_1) $. Let $S \in \mathscr{L}(\Theta_1)$ be an isomorphism that commutes with $D D^*$. Moreover, assume that $\range(D) \subseteq \Theta_1$ is closed and $\range(M^*) \subseteq \range(D)$. Then there exists a constant $\kappa > 0$ depending only on $D$ such that 
        \begin{align}
            \norm{D^* S w}_{\Theta_2} \geq \frac{\kappa}{\norm{S^{-1}} \norm{M}} \norm{M w}_{\Theta_3} \text{ for all } w \in \Theta_1 \, . \label{eq:DSlowbound}
        \end{align}
    \end{Prop}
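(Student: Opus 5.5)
We reduce the estimate to a lower bound for $D^*$ on $\range(D)$, transport $S$ through the orthogonal decomposition $\Theta_1 = \range(D) \oplus \ker(D^*)$, and use $\range(M^*) \subseteq \range(D)$ to discard the kernel component.

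\textbf{Step 1: the constant $\kappa$.} Since $\range(D)$ is closed, the closed range theorem gives that $\range(D^*)$ is closed and $\Theta_1 = \range(D) \oplus \ker(D^*)$ orthogonally. The restriction $D^*|_{\range(D)} \colon \range(D) \to \range(D^*)$ is injective, because $\range(D) \cap \ker(D^*) = \{0\}$. It is also surjective onto the closed space $\range(D^*)$. By the open mapping theorem (as in Lemma \ref{lemma:rightinverse}), it therefore has a bounded inverse. Hence there is $\kappa > 0$, depending only on $D$, with
\begin{align*}
\norm{D^* y}_{\Theta_2} \geq \kappa \norm{y}_{\Theta_1} \text{ for all } y \in \range(D) \, .
\end{align*}

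\textbf{Step 2: $S$ preserves the decomposition.} Let $P$ denote the orthogonal projection onto $\range(D)$. Since $\range(D)$ is closed, $\range(D) = \range(DD^*)$; indeed $\range(DD^*)$ is closed, since $DD^*$ restricted to $\range(D)$ is bounded below by $\kappa^2$. Also $\ker(D^*) = \ker(DD^*)$. As $S$ commutes with $DD^*$, it maps $\range(DD^*)$ into itself and $\ker(DD^*)$ into itself, so $SP = PS$. The same holds for $S^{-1}$, which also commutes with $DD^*$.

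\textbf{Step 3: the main chain of inequalities.} For $w \in \Theta_1$ write $w = Pw + (I-P)w$. Since $D^*(I-P)=0$, we have
\begin{align*}
D^* S w = D^* P S w = D^* S P w, \qquad S P w \in \range(D) \, .
\end{align*}
Step 1 gives $\norm{D^* S w} \geq \kappa \norm{SPw} \geq \kappa \norm{S^{-1}}^{-1} \norm{Pw}$. Next, $\range(M^*) \subseteq \range(D)$ yields $\ker(M) \supseteq \range(M^*)^\perp \supseteq \range(D)^\perp = \ker(D^*)$. Hence $M(I-P) = 0$, and
\begin{align*}
\norm{Mw} = \norm{MPw} \leq \norm{M} \norm{Pw} \, .
\end{align*}
Combining the two bounds gives \eqref{eq:DSlowbound}; the hypothesis $M \neq 0$ is only needed to divide by $\norm{M}$.

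\textbf{Main obstacle.} The delicate point is Step 2: one needs $S$ to leave $\range(D)$ invariant, while the hypothesis only concerns $DD^*$. This requires identifying $\range(D)$ with $\range(DD^*)$ under the closed-range assumption. If that identification is awkward, the fallback is spectral. The projection onto $\ker(DD^*)$ is $\mathds{1}_{\{0\}}(DD^*)$, and operators commuting with $DD^*$ commute with its spectral projections (\cite[Theorem $2.2$]{Conway}). So $S$ commutes with $I - \mathds{1}_{\{0\}}(DD^*) = P$.
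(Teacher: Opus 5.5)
Your proof is correct and follows the paper's argument essentially step for step. It uses the same constant $\kappa = \norm{(D^*|_{\range(D)})^{-1}}^{-1}$ from the open mapping theorem, the same commutation $SP = PS$ with the orthogonal projection $P$ onto $\range(D)$, the same chain $\norm{D^*Sw} = \norm{D^*SPw} \geq \kappa \norm{SPw} \geq \kappa \norm{S^{-1}}^{-1}\norm{Pw}$, and the inclusion $\ker(D^*) \subseteq \ker(M)$ to get $Mw = MPw$.

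The only variation is your Step 2. You obtain $SP = PS$ from $S$-invariance of $\ker(DD^*)$ and of $\range(DD^*) = \range(D)$, which avoids the functional calculus. For that identification you also need $\overline{\range(DD^*)} = \ker(DD^*)^\perp = \range(D)$, not just closedness of $\range(DD^*)$. The paper instead uses exactly your fallback, $P = \mathds{1}_{(0,\infty)}(DD^*)$.
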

    \begin{Rem}
        In the setting of Proposition \ref{Prop:lowbound}, Douglas range inclusion theorem \cite[Theorem $1$]{Douglasrangeincl} implies that there exists a constant $\kappa > 0$ depending on $M$ and $D$ such that 
            $\norm{D^* w}_{\Theta_2} \geq \kappa \norm{Mw}_{\Theta_3}$.
        The additional assumption that $S$ commutes with $DD^*$ ensures that we can replace $w$ by $Sw$ on the left hand side and that we get a more explicit dependence of $\kappa$ on $S$ and $M$.
    \end{Rem}
    \begin{proof}[Proof of Proposition \ref{Prop:lowbound}]
        We start by noting that the Hilbert space $\Theta_1$ splits as 
        \begin{align*}
            \Theta_1 = \ker(D^*) \oplus \ker(D^*)^\perp = \ker(D^* ) \oplus \overline{\range(D)} = \ker (D^*) \oplus \range (D) \, .
        \end{align*}
        Therefore, $D^* \! \mid_{\range(D)}$ is injective and $\range (D^* \! \mid_{\range(D)}) = \range(D^*)$ is closed by the closed range theorem. The open mapping theorem implies that $Q  \coloneq (D^* \! \mid_{\range(D)})^{-1}$ is a bounded operator from $\range(D^* )$ to $\range(D)$. Let $P \in \mathscr{L}(\Theta_1)$ denote the orthogonal projection onto the closed subspace $\range (D) \subseteq \Theta_1$. Then $I - P$ is the orthogonal projection onto $\ker(D^*) = \ker(DD^*)$ and therefore, we have 
            $I - P = \mathds{1}_{ \{ 0 \} } (D D^* )$,
        where the right-hand side is defined via the functional calculus of the bounded self-adjoint operator $DD^*$. In particular,
          $  P = I -\mathds{1}_{ \{ 0 \} } (D D^* ) = \mathds{1}_{(0,\infty)}(DD^*) $. 
        Since $S$ commutes with $DD^*$, $S$ also commutes with $P$. Therefore, for all $w \in \Theta_1$, we have
        \begin{align*}
            D^* S (I - P) w = D^* (I-P) S w = 0 \, ,
        \end{align*}
        where we have used that $I-P$ is the orthogonal projection onto $\ker(D^*)$. Hence,
        \begin{align*}
            \norm{D^* S w }_{\Theta_2} &= \norm{D^* S P w + D^* S (I - P) w}_{\Theta_2} 
            = \norm{D^* S P w}_{\Theta_2} \geq \frac{1}{\norm{Q}} \norm{Q D^*  P S w}_{\Theta_1} \\ 
            &= \frac{1}{\norm{Q}} \norm{P Sw}_{\Theta_1}  
            = \frac{1}{\norm{Q}} \norm{ S P w}_{\Theta_1} 
            \geq \frac{1}{\norm{Q} \norm{S^{-1}}} \norm{Pw}_{\Theta_1} \\
            &\geq \frac{1}{\norm{Q} \norm{S^{-1}} \norm{M}} \norm{M Pw}_{\Theta_3}
            = \frac{1}{\norm{Q} \norm{S^{-1}} \norm{M}} \norm{Mw}_{\Theta_3} \, .
        \end{align*}
        In the last step, we have used that $\ker (D^*) = \range(D)^\perp \subseteq \range(M^*)^\perp = \ker(M)$ and therefore $(I - P)w \in \ker(D^*)\subseteq \ker(M)$ implies that $M w = M P w$. In summary, we have shown that \eqref{eq:DSlowbound} holds with $\kappa \coloneq \norm{Q}^{-1}$.
    \end{proof}
    \begin{Cor}\label{Cor:Blowbound}
        Let $c_2 \in \R $ with $\abs{c_2} \norm{\cB \cB^*} < 1$ and define $T \coloneq I - c_2 \cB \cB^*$. Moreover, let $\ks \subseteq \HS$ be a closed subspace satisfying $E_0 \subseteq \range (\cB)$ and let $P_{\ks} \in \mathscr{L}(\HS)$ be the orthogonal projection onto $\ks$. Then there exists a constant $\kappa > 0$ depending only on $\cB$ such that 
        \begin{align}
            \norm{\cB^* T^{- \frac{1}{2}} W } \geq \frac{\kappa}{\lVert T^{ \frac{1}{2}}\rVert}\norm{ P_{\ks} W} 
            \geq \frac{\kappa}{\sqrt{2} } \norm{P_{\ks} W} \text{ for all } W \in \HS \, .  \label{eq:BsPlowbound}
        \end{align}
        Thus, if $\cB$ is surjective, there exists a constant $\sigma > 0$ depending only on $\cB$ such that 
        \begin{align}
             \norm{\cB^* T^{- \frac{1}{2}} W } \geq \frac{\sigma }{\lVert T^{ \frac{1}{2}}\rVert } \norm{W} 
            \geq \frac{\sigma }{\sqrt{2} } \norm{W} \text{ for all } W \in \HS \, . \label{eq:Blowbound}
        \end{align}
    \end{Cor}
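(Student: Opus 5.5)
We apply Proposition \ref{Prop:lowbound} with $\Theta_1 = \HS$, $\Theta_2 = U$, $\Theta_3 = \HS$, $D = \cB$, $M = P_{\ks}$ and $S = T^{-\frac{1}{2}}$. If $\ks = \{0\}$ the claim is trivial, so assume $P_{\ks} \neq 0$, in which case $\norm{P_{\ks}} = 1$.

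First we verify the hypotheses. By Assumption \ref{ass:control}, $\range(\cB)$ is closed. Since $P_{\ks}$ is an orthogonal projection, $\range(P_{\ks}^*) = \range(P_{\ks}) = \ks \subseteq \range(\cB)$. Next, $T = I - c_2\cB\cB^*$ is self-adjoint and satisfies $T \geq (1 - \abs{c_2}\norm{\cB\cB^*}) I > 0$, so $T^{\frac{1}{2}}$ and $T^{-\frac{1}{2}}$ are well-defined bounded isomorphisms. As $T$ commutes with $\cB\cB^*$, so do $T^{\pm\frac{1}{2}}$, by the functional calculus \cite[Theorem $2.2$]{Conway}, as already used in the proof of Lemma \ref{lemma:IDAPBC}. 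Proposition \ref{Prop:lowbound} then gives a constant $\kappa = \norm{Q}^{-1} > 0$ depending only on $\cB$ with
\begin{align*}
\norm{\cB^* T^{-\frac{1}{2}} W} \geq \frac{\kappa}{\norm{T^{\frac{1}{2}}}}\norm{P_{\ks} W},
\end{align*}
using $\norm{S^{-1}} = \norm{T^{\frac{1}{2}}}$. This is the first inequality in \eqref{eq:BsPlowbound}.

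For the second inequality we bound $\norm{T^{\frac{1}{2}}}$. Since $\cB\cB^* \geq 0$, the spectrum of $T$ lies in $[1 - \abs{c_2}\norm{\cB\cB^*},\, 1 + \abs{c_2}\norm{\cB\cB^*}] \subseteq (0,2)$. Hence $\norm{T} \leq 2$ and $\norm{T^{\frac{1}{2}}} = \norm{T}^{\frac{1}{2}} \leq \sqrt{2}$.

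For the surjective case we take $\ks = \HS = \range(\cB)$, so that $P_{\ks} = I$, and set $\sigma \coloneq \kappa$. This gives \eqref{eq:Blowbound}.

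The only delicate point is the commutation of $T^{-\frac{1}{2}}$ with $\cB\cB^*$, together with the observation that $\kappa = \norm{Q}^{-1}$ depends only on $D = \cB$ and not on $c_2$. Both follow directly from the functional calculus and from the construction in the proof of Proposition \ref{Prop:lowbound}.
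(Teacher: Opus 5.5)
Your proof is correct and follows the paper's route: apply Proposition~\ref{Prop:lowbound} with $S = T^{-\frac{1}{2}}$, $D = \cB$ and $M = P_{\ks}$, handle the trivial case $\ks = \{0\}$ separately, use $\norm{T} \leq 2$ for the second inequality, and take $\ks = \HS$ in the surjective case. You also spell out two points the paper leaves implicit: the spectral argument for $\norm{T^{\frac{1}{2}}} \leq \sqrt{2}$, and the observation that $\kappa = \norm{Q}^{-1}$ depends neither on $c_2$ nor on $\ks$.
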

    \begin{proof}
        Note that if $\ks = \{ 0 \}$, then \eqref{eq:BsPlowbound} is trivially satisfied. Thus, we can assume that $\ks$ is nonzero. We define the operator $M \in \mathscr{L}( \HS )$ by $M = P_{\ks}$. Then $\norm{M} = 1$ and $M^* = M$. Therefore $\range(M^*) = \ks   \subseteq \range (B)$. Moreover, we note that $T$ commutes with $\cB \cB^*$ and therefore $T^{-\frac{1}{2}} $ also commutes with $\cB \cB^*$. Hence we can apply Proposition \ref{Prop:lowbound} to the operators $S = T^{-\frac{1}{2}}, D = \cB$ and $M = P_{\ks}$ to obtain the existence of $\kappa > 0$ depending only on $\cB$ such that 
        \begin{align*}
            \norm{\cB^* T^{-\frac{1}{2}} W } \geq \frac{\kappa}{\norm{M} \norm{T^{\frac{1}{2}}}} \norm{P_{\ks} W} \geq \frac{\kappa}{\sqrt{2}} \norm{P_{\ks} W}  \text{ for all } W \in \HS \, . 
        \end{align*}
        In the last step, we have used that $\norm{T} \leq 2$. If $\cB$ is surjective, then we can choose $\ks = \HS = \range (\cB)$ to obtain the lower bound \eqref{eq:Blowbound}.  
    \end{proof}
    \begin{Lemma}\label{lemma:grandnormlowbound}
        Let $\cB$ be surjective, $c_2 \in \R$ with $\abs{c_2} \norm{\cB \cB^*} < 1$ and $c_3 \geq 0$. Consider the potential 
             $\hat{\Pot}(\charX) = {c_3}/{2} \norm{ \cB^* T^{-\frac{1}{2}}(\charX - \Xeq) }^2 - \langle \nabla \Pot(\Xeq), \charX - \Xeq \rangle $
        and assume that $\nabla \Pot$ is Lipschitz. Then there exists a constant $\sigma$ depending on $\cB$ and a constant $K > 0$ depending on $\cB$ and a Lipschitz constant of $\nabla \Pot$ such that for $c_3 > K$, we have 
        \begin{align}
            \norm{\nabla \Pot (Y) + \nabla \hat{\Pot} (Y)} \geq \frac{\sigma^2}{2} (c_3 - K) \norm{ Y - \Xeq } \, , \, Y \in \HS \, . \label{eq:gradnormlowbound}
        \end{align}
    \end{Lemma}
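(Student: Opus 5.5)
We compute the gradient of $\hat{\Pot}$ explicitly and combine the coercive quadratic part with the Lipschitz bound on $\nabla \Pot$. Since $T^{-\frac{1}{2}}$ is self-adjoint, the gradient is
\begin{align*}
    \nabla \hat{\Pot}(Y) = c_3 T^{-\frac{1}{2}} \cB \cB^* T^{-\frac{1}{2}} (Y - \Xeq) - \nabla \Pot(\Xeq) \, .
\end{align*}
Setting $W \coloneq Y - \Xeq$, we obtain
\begin{align*}
    \nabla \Pot(Y) + \nabla \hat{\Pot}(Y) = c_3 A W + \big(\nabla \Pot(Y) - \nabla \Pot(\Xeq)\big), \qquad A \coloneq T^{-\frac{1}{2}} \cB \cB^* T^{-\frac{1}{2}} \, .
\end{align*}
The strategy is to bound $\norm{AW}$ from below by a multiple of $\norm{W}$, and to bound the remainder from above by $L \norm{W}$, where $L$ is a Lipschitz constant of $\nabla \Pot$.

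For the lower bound we use Corollary \ref{Cor:Blowbound}. Since $\cB$ is surjective, \eqref{eq:Blowbound} gives $\norm{\cB^* T^{-\frac{1}{2}} W} \geq \frac{\sigma}{\sqrt{2}} \norm{W}$, with $\sigma$ depending only on $\cB$. The operator $A$ is self-adjoint and positive semidefinite. By Cauchy--Schwarz,
\begin{align*}
    \norm{AW}\norm{W} \geq \langle W, AW \rangle = \norm{\cB^* T^{-\frac{1}{2}} W}^2 \geq \frac{\sigma^2}{2} \norm{W}^2 ,
\end{align*}
and hence $\norm{AW} \geq \frac{\sigma^2}{2}\norm{W}$. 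This is the step that produces the factor $\sigma^2/2$ in \eqref{eq:gradnormlowbound}. The only delicate point is to route the estimate through the quadratic form, rather than through operator norms of $T^{\pm\frac{1}{2}}$, so that no $c_2$-dependent constant enters.

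For the upper bound, Lipschitz continuity gives $\norm{\nabla \Pot(Y) - \nabla \Pot(\Xeq)} \leq L \norm{W}$. The reverse triangle inequality then yields
\begin{align*}
    \norm{\nabla \Pot(Y) + \nabla \hat{\Pot}(Y)} \geq \Big(c_3 \frac{\sigma^2}{2} - L\Big) \norm{W} = \frac{\sigma^2}{2}\big(c_3 - K\big) \norm{Y - \Xeq} ,
\end{align*}
where $K \coloneq 2L/\sigma^2$. This $K$ depends only on $\cB$ (through $\sigma$) and on $L$, and for $c_3 > K$ the bound is \eqref{eq:gradnormlowbound}.

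I expect no serious obstacle here. The main care needed is in the lower bound for $\norm{AW}$, and Corollary \ref{Cor:Blowbound} together with the quadratic-form argument handles it.
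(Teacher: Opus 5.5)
Your proof is correct and essentially the paper's argument: both use $\sigma$ from Corollary~\ref{Cor:Blowbound} via the quadratic form $\langle W, T^{-\frac{1}{2}} \cB \cB^* T^{-\frac{1}{2}} W\rangle = \norm{\cB^* T^{-\frac{1}{2}} W}^2$ and take $K = 2L/\sigma^2$. The only cosmetic difference is the order of steps. The paper pairs the whole expression $\nabla \Pot(Y) + \nabla \hat{\Pot}(Y)$ with $Y - \Xeq$ and applies Cauchy--Schwarz once at the end, whereas you first lower-bound $\norm{AW}$ and then finish with the reverse triangle inequality.
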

    \begin{proof}
        Since $\cB$ is surjective, Corollary \ref{Cor:Blowbound} implies that there exists $\sigma > 0$ such that \eqref{eq:Blowbound} holds. We define 
            $K \coloneq 2 L /{\sigma^2}$,
        where $L$ is a Lipschitz constant of $\nabla \Pot$. Let us assume that $c_3 > K$. Then we have for $Y \in \HS$
        \begin{align*}
            \langle Y - \Xeq &, \nabla \Pot (Y) + \nabla \hat{\Pot} (Y) \rangle = \langle Y - \Xeq , \nabla \Pot (Y) - \nabla \Pot(\Xeq)  +c_3 T^{-\frac{1}{2}} \cB \cB^* T^{-\frac{1}{2}} (Y - \Xeq) \rangle \\
            &\geq -  L \norm{ Y - \Xeq }^2 + \frac{c_3 \sigma^2}{2} \norm{ Y - \Xeq }^2 
            = \frac{\sigma^2}{2} (c_3 - K) \norm{ Y - \Xeq }^2 \, .
        \end{align*}
         In particular, dividing by $\norm{ Y - \Xeq }$ yields the bound \eqref{eq:gradnormlowbound}.
    \end{proof}
    \begin{Cor}[Convergence properties for different IDA-PBC feedback controllers]\label{cor:IDAPBCconv}
        Let $0 < c_1 $, $c_2 \in \R$ with $\abs{c_2} \norm{\cB \cB^*} < 1$ and $\Xeq \in \HS$. We consider the feedback controller \eqref{eq:genIDAPBCfeedback} and the corresponding closed-loop system \eqref{eq:PHclosedloop} as in Lemma \ref{lemma:IDAPBC} for different functions $F$ and $\hat{\Pot}$. Let $\charX, \, \charV \in C^1([0,\infty) ; \HS )$ be solutions to \eqref{eq:PHclosedloop}.
        \thmpart \label{cor:feedbvelconv}(Convergence for $\hat{\Pot}, \, F$ as in \eqref{eq:FhatV}) We suppose that $\nabla \Pot $ is uniformly continuous and that there exists $\underline{\psi} > 0$ and a closed subspace $\ks \subseteq \range (\cB)$ such that 
            \begin{align}
                \langle \charV, \Psiop(\charX) \charV \rangle \geq \underline{\psi} \norm{ (I - P_{\ks}) \charV }^2 \text{ for all } \charV \in \HS , \label{eq:psikolowbd}
            \end{align}
            where $P_{\ks} $ is the orthogonal projection onto $\ks$.
            Let $c_3 \geq 0$ and assume that $F$ and $\hat{\Pot}$ are given by \eqref{eq:FhatV}. Then $\charV_t \to 0$ and $\nabla \Pot (\charX_t) + \nabla \hat{\Pot}(\charX_t) \to 0 $ in $\HS$ as $t \to \infty$.
            \thmpart \label{cor:feedbvelconvstatad}(Convergence for $\hat{\Pot}, \, F$ as in \eqref{eq:statadFhatV}) 
             Let $\cB$ be surjective, $\nabla \Pot$ be Lipschitz and let $c_3 > 0$. If $F$ and $\hat{\Pot}$ are given by \eqref{eq:statadFhatV}, then $\charV_t \to 0$ and $\nabla \Pot (\charX_t) + \nabla \hat{\Pot}(\charX_t) \to 0 $ in $\HS$ as $t \to \infty $. Moreover, there exists a constant $K > 0$ only depending on $\cB$ and a Lipschitz-constant of $\nabla \Pot$ such that if $c_3 > K$, we also have $\charX_t \to \Xeq$ as $t \to \infty$. 
    \end{Cor}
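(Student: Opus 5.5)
Both parts will be obtained by verifying the hypotheses of Theorem \ref{thm:BarbConv}. The work lies in showing that \ref{ass:feedlowbound}, \ref{ass:feedunifcont} and \ref{ass:feedposdef} hold for the given choices of $F$ and $\hat{\Pot}$, and, for the final claim in (ii), also \ref{ass:feedposconv}.

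\emph{Part (i).} We have $\hat{\Pot}\geq 0$, and $\Pot$ is bounded below by Assumption \ref{ass:control}, so \ref{ass:feedlowbound} holds.

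For \ref{ass:feedunifcont}, note that $\nabla\hat{\Pot}(\charX)=c_3T^{-\frac12}\cB\cB^*T^{-\frac12}(\charX-\Xeq)$ is affine and bounded, hence uniformly continuous. Since $\nabla\Pot$ is uniformly continuous by hypothesis, the sum is uniformly continuous.

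Boundedness of $(\nabla\Pot(\charX_t)+\nabla\hat{\Pot}(\charX_t))_t$ is the delicate point. I would use the energy: $H_{\text{cl}}$ is nonincreasing along solutions, because the dissipation term $T\Psiop T+c_1T^{\frac12}\cB\cB^*T^{\frac12}$ is positive semidefinite. Together with $\hat{\Pot}\geq0$ this gives $\Pot(\charX_t)\leq H_{\text{cl}}(\charX_0,\charV_0)$. The growth bound \eqref{eq:potgrwthbd} then bounds $\norm{\nabla\Pot(\charX_t)}$ uniformly in $t$. The $\hat{\Pot}$ part is handled the same way: $\hat{\Pot}(\charX_t)=\frac{c_3}{2}\norm{\cB^*T^{-\frac12}(\charX_t-\Xeq)}^2$ is bounded by the energy, and
\[
\norm{\nabla\hat{\Pot}(\charX_t)}\leq c_3\norm{T^{-\frac12}\cB}\,\norm{\cB^*T^{-\frac12}(\charX_t-\Xeq)}
\]
is therefore bounded.

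For \ref{ass:feedposdef}, write $W=T^{-\frac12}\charV$. By commutativity, $\langle\charV,c_1T^{-\frac12}\cB\cB^*T^{-\frac12}\charV\rangle=c_1\norm{\cB^*T^{-\frac12}\charV}^2$. Corollary \ref{Cor:Blowbound} bounds this from below by $c_1\frac{\kappa^2}{2}\norm{P_{\ks}\charV}^2$. Adding \eqref{eq:psikolowbd} and using $\norm{P_{\ks}\charV}^2+\norm{(I-P_{\ks})\charV}^2=\norm{\charV}^2$ yields \ref{ass:feedposdef} with $\nu=\min(\underline{\psi},c_1\kappa^2/2)$. Theorem \ref{thm:BarbConv} then gives the convergence claims of part (i).

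\emph{Part (ii).} Since $\cB$ is surjective, Corollary \ref{Cor:Blowbound} gives $c_1\norm{\cB^*T^{-\frac12}\charV}^2\geq c_1\sigma^2\norm{\charV}^2/2$. As $\Psiop\geq0$, \ref{ass:feedposdef} holds regardless of $\Psiop$.

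Uniform continuity in \ref{ass:feedunifcont} follows because $\nabla\Pot$ is Lipschitz and $\nabla\hat{\Pot}$ is affine and bounded. Boundedness along the trajectory I would again derive from the energy. Since $\hat{\Pot}$ now contains the linear term $-\langle\nabla\Pot(\Xeq),\charX-\Xeq\rangle$, I would bound $\Pot+\hat{\Pot}$ via the Lipschitz property. The descent lemma gives
\[
\Pot(Y)\geq \Pot(\Xeq)+\langle\nabla\Pot(\Xeq),Y-\Xeq\rangle-\tfrac L2\norm{Y-\Xeq}^2,
\]
so $\Pot+\hat{\Pot}\geq \Pot(\Xeq)+\frac{\sigma^2c_3-L}{2}\norm{Y-\Xeq}^2$. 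When $c_3\sigma^2>L$ this gives \ref{ass:feedlowbound} and also bounds $\norm{\charX_t-\Xeq}$ by the energy. Then Lipschitz continuity of $\nabla\Pot$ and boundedness of $\nabla\hat{\Pot}$ as an affine map give boundedness of the forces.

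For small $c_3>0$ this argument fails, and it is the main obstacle. Here I would combine the lower bound $\Pot\geq\underline{\mathcal{V}}$ with the quadratic term $\frac{c_3\sigma^2}{4}\norm{\charX-\Xeq}^2$ from Corollary \ref{Cor:Blowbound}, which dominates the linear term. This gives \ref{ass:feedlowbound} and, via the energy, a bound on $\norm{\charX_t-\Xeq}$, hence bounded forces as before.

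Finally, for $c_3>K$ with $K$ from Lemma \ref{lemma:grandnormlowbound}, inequality \eqref{eq:gradnormlowbound} gives \ref{ass:feedposconv} directly with $\delta=\frac{\sigma^2}{2}(c_3-K)\varepsilon$, so Theorem \ref{thm:BarbConv} yields $\charX_t\to\Xeq$.
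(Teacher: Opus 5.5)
Your proposal is correct and takes essentially the same route as the paper. You verify (i)--(iii) of Theorem \ref{thm:BarbConv} via monotonicity of $H_{\text{cl}}$, the growth bound \eqref{eq:potgrwthbd} and Corollary \ref{Cor:Blowbound} (with $\ks=\HS$ in the surjective case), and you obtain (iv) from Lemma \ref{lemma:grandnormlowbound}. The descent-lemma detour in part (ii) is unnecessary, and its quadratic coefficient should read $\frac{c_3\sigma^2/2-L}{2}$, since squaring \eqref{eq:Blowbound} only gives $\frac{\sigma^2}{2}$. Your fallback bound $\underline{\mathcal{V}}-\norm{\nabla\Pot(\Xeq)}\norm{Y-\Xeq}+\frac{c_3\sigma^2}{4}\norm{Y-\Xeq}^2$ is exactly the paper's and already covers every $c_3>0$. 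Using it to bound $\norm{\charX_t-\Xeq}$ and then invoking Lipschitz continuity of $\nabla\Pot$ is a clean way to fill in the paper's ``similarly'' for boundedness of the forces.
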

    \begin{proof}
        We have to verify the assumptions of Theorem \ref{thm:BarbConv} for each feedback controller.
        We start by considering $\hat{\Pot}$ given in \eqref{eq:FhatV}: Then we have 
        \begin{align*}
            \Pot(\charX) + \hat{\Pot}(\charX) = \Pot(\charX) + \frac{c_3}{2} \norm{B^* T^{-\frac{1}{2}} (\charX - \Xeq) }^2 \, .
        \end{align*}
        In particular, since $\Pot$ is bounded from below (see Assumption \ref{ass:control}), we see that $\Pot + \hat{\Pot}$ is also bounded from below. In order to verify Assumption \ref{ass:feedunifcont}, note that 
        \begin{align*}
            \nabla \Pot(\charX) + \nabla \hat{\Pot} ( \charX) =  \nabla \Pot(\charX) + c_3 T^{-\frac{1}{2} } \cB \cB^* T^{-\frac{1}{2}} (\charX - \Xeq) \, .
        \end{align*}
        Since $\nabla \Pot$ is uniformly continuous and $\nabla \hat{\Pot}$ is a bounded linear map, we find that $\nabla \Pot + \nabla \hat{\Pot}$ is also uniformly continuous. Moreover, using the upper bound \eqref{eq:potgrwthbd}, we see that
        \begin{align*}
            \lVert \nabla \Pot(\charX_t) + \nabla \hat{\Pot} ( \charX_t) \rVert 
            &\leq \frac{1}{2} \big(1 + \norm{\nabla \Pot (\charX_t)}^2 \big)  
             + c_3 \norm{T^{-\frac{1}{2} } \cB \cB^* T^{-\frac{1}{2}} (\charX_t - \Xeq)} \\
            &\leq \frac{1}{2} +  \frac{C}{2} (1 + \Pot(\charX_t) -\underline{\mathcal{V}}  ) + c_3 \norm{T^{-\frac{1}{2}} } \norm{\cB} \norm{\cB^* T^{-\frac{1}{2}} (\charX_t - \Xeq)} \\
            &\leq \frac{1}{2} +  \frac{C}{2}(1 + \Pot(\charX_t) -\underline{\mathcal{V}} ) + \frac{c_3}{2} \norm{T^{-\frac{1}{2}} }^2 \norm{\cB}^2 
            + \frac{c_3}{2} \norm{\cB^* T^{-\frac{1}{2}} (\charX_t - \Xeq) }^2\\
            &\leq \frac{1}{2} +  \frac{C}{2} + \frac{c_3}{2} \norm{T^{-\frac{1}{2}} }^2 \norm{\cB}^2 + \max \{C/{2}, 1 \} (\Pot(\charX_t) - \underline{\mathcal{V}} + \hat{\Pot}(\charX_t) ) \\
            &\leq \frac{1}{2} +  \frac{C}{2}  + \frac{c_3}{2} \norm{T^{-\frac{1}{2}} }^2 \norm{\cB}^2 + \max \{C/{2}, 1 \} (H_{\text{cl}}(\charX_0, \charV_0) - \underline{\mathcal{V}})   \, .
        \end{align*}
        It remains to show the positive definiteness Assumption \ref{ass:feedposdef}: Let $\kappa > 0$ be as in Corollary \ref{Cor:Blowbound}. By using \eqref{eq:psikolowbd}, we obtain for $Y, W \in \HS$
        \begin{align}
            \langle W &, (\Psiop(Y) + c_1 T^{- \frac{1}{2}} \cB \cB^* T^{- \frac{1}{2}}) W \rangle
            \geq \underline{\psi } \norm{(I - P_{\ks} )W }^2 + c_1 \norm{\cB^* T^{-\frac{1}{2}} W }^2 \nonumber \\
            &\geq \underline{\psi } \norm{(I - P_{\ks} )W}^2 + c_1 \frac{\kappa^2}{2} \norm{ P_{\ks} W }^2
            \geq \frac{1}{2} \min\{2\underline{\psi}, c_1 \kappa^2 \} \norm{W}^2 \, . \label{eq:verifyposdef}
        \end{align}
        Now, let us consider the Feedback \eqref{eq:statadFhatV} and let $\cB$ be surjective. Let $\sigma > 0$ be as in \eqref{eq:Blowbound}. Thus, we obtain for $Y \in \HS$ and $\varepsilon > 0$
        \begin{align*}
            \Pot(Y) + \hat{\Pot}(Y) &= \Pot(Y) - \langle \nabla \Pot (\Xeq ) , Y - \Xeq \rangle + \frac{c_3}{2} \norm{B^* T^{-\frac{1}{2}} (Y - \Xeq)}^2 \\
            &\geq \underline{\mathcal{V}} - \norm{ \nabla \Pot (\Xeq) } 
            \norm{ Y - \Xeq } + \frac{c_3 \sigma^2 }{4}  \norm{ Y - \Xeq }^2 \\
            &\geq \underline{\mathcal{V}} - \frac{1}{2\varepsilon} \norm{ \nabla \Pot (\Xeq) } + \Big(\frac{c_3 \sigma^2 }{4} - \varepsilon \norm{ \nabla \Pot (\Xeq) }  \Big)  \norm{ Y - \Xeq }^2 \, .
        \end{align*}
        Therefore, we can take $\varepsilon < c_3 \sigma^2 (4 \norm{\nabla \Pot (\Xeq)} )^{-1} $ to see that $\Pot + \hat{\Pot}$ is bounded from below. The uniform continuity and boundedness Assumption \ref{ass:feedunifcont} can be proven similarly as for the previous feedback controller \eqref{eq:FhatV}. Positive definiteness can be seen as in \eqref{eq:verifyposdef} by taking $\ks = \HS$. 
        Now it remains to show that Assumption \ref{ass:feedposconv} is satisfied if $c_3$ is sufficiently large. More precisely, if we choose $K$ as in Lemma \ref{lemma:grandnormlowbound} and if $c_3 > K$, then the lower bound \eqref{eq:gradnormlowbound} implies that Assumption \ref{ass:feedposconv} holds.
    \end{proof}
    By applying the previous corollary, we can now derive convergence results for the continuous time closed-loop systems obtained in Corollary \ref{cor:semexquadvdes}. Recall that in the continuous time setting, we regard $\Delta t$ and $\rho$ as fixed parameters. 
        \begin{Thm}[Convergence of the continuous time systems \eqref{eq:semiexpleulerconttime} and \eqref{eq:semiexpleulerconttimestatad}]\label{thm:semexeulerconttimeconv} 
            We consider the setting from Corollary \ref{cor:semexquadvdes} and let $T \coloneq I - c_2 \cB \cB^*$. 
            \thmpart\label{it:semexctcnst} We suppose that $\nabla \Pot$ is uniformly continuous and that there exists $\underline{\psi} > 0$ and a closed subspace $\ks \subseteq \range (\cB)$ such that 
            \begin{align}
                \langle \charV, \Psiop(\charX) \charV \rangle \geq \underline{\psi} \norm{ (I - P_{\ks}) \charV }^2 \text{ for all } \charV \in \HS \, .
            \end{align}
                Let the parameters $\Delta t > 0, c \geq 0$ and let $\rho$ satisfy 
            \begin{align*}
                0 < \rho < \frac{1}{(\Delta t)^3 (1+ c(\Delta t)^2) \norm{\cB \cB^*}} \, .
            \end{align*}
            Let $\charX, \, \charV \in C^1([0,\infty) ; \HS )$ be solutions to the closed-loop system \eqref{eq:semiexpleulerconttime}. Then $\charV_t \to 0$ and $\nabla \Pot (\charX_t) + c_3 T^{-1} \cB \cB^*(\charX_t - \Xeq) \to 0 $ in $\HS$ as $t \to \infty$.
                \thmpart\label{it:semexctsurj} Let $\cB$ be surjective and $\nabla \Pot$ be Lipschitz. Then there exists a constant $K > 0$ depending on $\cB$ and a Lipschitz constant of $\nabla \Pot$ such that if the parameters $\Delta t, \rho$ and $c$ satisfy 
                \begin{align}
                    0 < \Delta t &< \frac{1}{\sqrt{ K \norm{\cB \cB^*}}}, \ c > \frac{K \norm{\cB \cB^*}}{1 - (\Delta t)^2 K \norm{\cB \cB^*} } , \  
                    \frac{K}{c (\Delta t)^3} &< \rho < \frac{1}{(\Delta t)^3(1+c(\Delta t)^2 ) \norm{\cB \cB^*}} \, , \label{eq:parrestrsemsex}
                \end{align}
                then any solution $\charX, \, \charV \in C^1([0,\infty) ; \HS )$ to the continuous time-system \eqref{eq:semiexpleulerconttimestatad} satisfies $\charV_t \to 0$ and $\charX_t \to \Xeq $ in $\HS$ as $t \to \infty$.
        \end{Thm}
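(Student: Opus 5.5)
The plan is to reduce both parts to Corollary \ref{cor:IDAPBCconv}, using the identification of the closed-loop systems given in Corollary \ref{cor:semexquadvdes}. Recall from \eqref{eq:c1c3semex} that
\[
c_1 = \rho(\Delta t)^2(1+c(\Delta t)^2), \qquad c_2 = \rho(\Delta t)^3(1+c(\Delta t)^2), \qquad c_3 = c\rho(\Delta t)^3 .
\]
The upper bound on $\rho$ in both parts is exactly $c_2\norm{\cB\cB^*}<1$. Since $c_2>0$, this gives $\abs{c_2}\norm{\cB\cB^*}<1$. Moreover $c_1>0$ because $\rho,\Delta t>0$, and $c_3\ge 0$. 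Hence Lemma \ref{lemma:IDAPBC} applies with $\hat{\Pot}$ as in \eqref{eq:semiexnonstatad}, respectively \eqref{eq:semiexstatad}. Consequently any $C^1$ solution of \eqref{eq:semiexpleulerconttime}, respectively \eqref{eq:semiexpleulerconttimestatad}, is a solution of the port-Hamiltonian system \eqref{eq:PHclosedloop}.

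For part \ref{it:semexctcnst}, I apply Corollary \ref{cor:IDAPBCconv}\ref{cor:feedbvelconv} directly; its hypotheses are literally those assumed here. This yields $\charV_t\to0$ and $\nabla\Pot(\charX_t)+\nabla\hat{\Pot}(\charX_t)\to0$. It remains to identify the gradient. We have
\[
\nabla\hat{\Pot}(\charX)=c_3T^{-\frac12}\cB\cB^*T^{-\frac12}(\charX-\Xeq)=c_3T^{-1}\cB\cB^*(\charX-\Xeq),
\]
where the second equality uses that $T^{-\frac12}$ commutes with $\cB\cB^*$, as in the proof of Lemma \ref{lemma:IDAPBC}.

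For part \ref{it:semexctsurj}, I use Corollary \ref{cor:IDAPBCconv}\ref{cor:feedbvelconvstatad} with the constant $K$ from there, that is $K=2L/\sigma^2$ from Lemma \ref{lemma:grandnormlowbound}, which depends only on $\cB$ and $L$. Surjectivity and the Lipschitz property are assumed, and $c_3=c\rho(\Delta t)^3>0$ holds since $c>0$. That corollary gives $\charV_t\to0$, and gives $\charX_t\to\Xeq$ provided $c_3>K$. The lower bound $\rho>K/(c(\Delta t)^3)$ in \eqref{eq:parrestrsemsex} is equivalent to $c_3>K$.

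The only genuine work, and the main obstacle, is checking that the interval for $\rho$ in \eqref{eq:parrestrsemsex} is nonempty, so the conditions are consistent. This requires
\[
\frac{K}{c(\Delta t)^3}<\frac{1}{(\Delta t)^3(1+c(\Delta t)^2)\norm{\cB\cB^*}}
\quad\Longleftrightarrow\quad
K\norm{\cB\cB^*}(1+c(\Delta t)^2)<c
\quad\Longleftrightarrow\quad
c\bigl(1-(\Delta t)^2K\norm{\cB\cB^*}\bigr)>K\norm{\cB\cB^*}.
\]
The condition $\Delta t<(K\norm{\cB\cB^*})^{-1/2}$ makes the bracket positive, so the last inequality is exactly the stated lower bound on $c$. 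With the constants verified, the conclusions follow directly from the cited corollary; no further analysis is needed.
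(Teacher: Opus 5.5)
Your proposal is correct and matches the paper's proof: both parts are reduced to Corollary \ref{cor:IDAPBCconv}, and via \eqref{eq:c1c3semex} the conditions $c_2\norm{\cB\cB^*}<1$ and $c_3>K$ become exactly the stated bounds on $\rho$. Your extra checks are all accurate: $c_1>0$, the identity $\nabla\hat{\Pot}(\charX)=c_3T^{-1}\cB\cB^*(\charX-\Xeq)$, and nonemptiness of the $\rho$-interval. The last one is only a consistency remark, not a step the proof logically needs.
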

        \begin{proof}
            The convergence result for the system \eqref{eq:semiexpleulerconttime} is a consequence of Corollary \ref{cor:feedbvelconv}. Now let us assume that $\cB$ is surjective. According to Corollary \ref{cor:feedbvelconvstatad}, there exists a constant $K > 0$ depending on a Lipschitz constant of $\nabla \Pot$ and $\cB$ such that if $c_3 > K$ and $c_2 \norm{\cB \cB^*} < 1$, we obtain $\charV_t \stackrel{t \to \infty}{\to} 0$ and $\charX_t \stackrel{t \to \infty}{\to } \Xeq $ in $\HS$. Since $c_2$ and $c_3$ are given by \eqref{eq:c1c3semex}, we obtain the conditions 
            \begin{align*}
                \rho < \frac{1}{(\Delta t)^3(1+c (\Delta t)^2) \norm{\cB \cB^*}} \text{ and } \rho > \frac{K}{c (\Delta t)^3} \, ,
            \end{align*}
            which are satisfied due to \eqref{eq:parrestrsemsex}.
        \end{proof}

        To complete the convergence analysis, we analyze convergence of the discrete time system. The main ingredient is the energy balance of a modified closed-loop Hamiltonian.
        \begin{Thm}[Discrete time energy balance for a modified Hamiltonian using the semi-explicit Euler scheme\label{thm:semexdiscrtimeHepsbal}]
            We consider the setting from Corollary \ref{cor:semexquadvdes} and suppose that $\nabla \Pot$ is Lipschitz. Thus for $\Delta t , \rho > 0$ and $c \geq 0$, the parameters $c_1,c_2,c_3$ are given by \eqref{eq:c1c3semex}. 
             We consider the semi-explicit Euler-discretization \eqref{eq:semi-explEuler0} with step-size $\Delta t > 0$ of the closed-loop systems \eqref{eq:semiexpleulerconttime} and \eqref{eq:semiexpleulerconttimestatad}, that is $\charX^0 = \charX_0$, $\charV^0 = \charV_0$ and
             \begin{subequations}\label{eq:semexeulerits}
             \begin{align}
                 \charX^{l+1} &= \charX^l + \Delta t \charV^{l+1}, \\
                 \charV^{l+1} &= \charV^l \hspace{-1pt} - \Delta t T \Big(\Psiop(\charX^l) \hspace{-1pt} + c_1 T^{-\frac{1}{2}} \cB \cB^* T^{-\frac{1}{2}}  \Big) \charV^l 
                 \hspace{-1pt} - \Delta t T (\nabla \Pot(\charX^l) + \nabla \hat{\Pot}(\charX^l) ) \label{eq:semexeuleritsVup}  .
                 \end{align}
             \end{subequations}
        Moreover, recall that the operator $T$ is defined by $T = I - c_2 \cB \cB^*$ and $H_{\text{cl}}$ is given by \eqref{eq:genHcl}. For $\varepsilon > 0$, we define the modified closed-loop Hamiltonian $H_{\text{cl},\varepsilon}$ by 
            \begin{align}
                H_{\text{cl},\varepsilon} (\charX, \charV) &= H_{\text{cl}}(\charX, \charV) + \varepsilon \langle \charV , T (  \nabla \Pot (\charX) + \nabla \hat{\Pot} (\charX) ) \rangle\label{eq:Hepsdef}  .
            \end{align}
            We consider the following cases: 
            \thmpart\label{ass:Hepsdecr} Suppose there exists $\underline{\psi} > 0$ and a closed subspace $\ks \subseteq \range (\cB)$ such that 
            \begin{align}
                    \langle W, \Psiop(\charX) W \rangle \geq \underline{\psi} \norm{(I- P_{\ks}) W}^2 , \ W, \, \charX \in \HS \, , \label{eq:Psioplowbd}
                \end{align}
            where $P_{\ks}$ is the orthogonal projection onto $\ks$. 
                Let $\hat{\Pot}$ be given by \eqref{eq:semiexnonstatad}. 
                Then there exists a constant $ 0< \alpha < 1$ (which can be chosen as $\alpha = 1/{66}$), such that for every $\gamma \in (0,1] $ and for all $\overline{c} \geq 0$, there exists $h > 0$ (depending on $\overline{c}, \gamma, \underline{\psi}, \norm{\Psiop}_\infty$ a Lipschitz constant of $\nabla \Pot$, $C > 0$ as in \eqref{eq:potgrwthbd} and the control operator $\cB$) such that if
                \begin{align*}
                    0 \leq c \leq \overline{c}  , \,  \frac{\gamma \alpha}{(\Delta t)^3 (1+c(\Delta t)^2) \norm{\cB \cB^*} } &\leq \rho \leq \frac{\alpha }{(\Delta t)^3 (1+c(\Delta t)^2) \norm{\cB \cB^*}} \text{ and } 0 < \Delta t \leq h, 
                \end{align*}
                then $H_{\text{cl}, 3 \Delta t}$ is bounded from below and it holds that 
                \begin{align}
                    H_{\text{cl}, 3 \Delta t} (\charX^{l+1} , \charV^{l+1}) 
                    &\leq  H_{\text{cl}, 3 \Delta t} (\charX^{l} , \charV^{l}) 
                    - \frac{\Delta t}{2} \underline{\psi} \norm{ (I- P_{\ks})\charV^l }^2
                     - \frac{c_1 \Delta t}{4} \norm{\cB^*T^{-\frac{1}{2}} \charV^l}^2 \nonumber \\
                    &\quad- \frac{(\Delta t)^2}{2} \norm{T \big(\nabla \Pot(\charX^l) + \nabla \hat{\Pot}(\charX^l) \big) }^2 \text{ for all } l \in \N_0 \, . \label{eq:semexHepsbound}
                \end{align}
                \thmpart\label{ass:HepsdecrBsurj} Suppose that $\cB$ is surjective and let $\hat{\Pot}$ be given by \eqref{eq:semiexstatad}.
                Then there exists a constant $0 < \alpha < 1$ (which can be chosen as $\alpha = 1/{66}$) and a constant $\nu > 0$ (only depending on $\cB$) such that for every $\gamma \in (0,1]$ and for all $\overline{c} > 0$, there exists $h > 0$ (depending on $\overline{c}, \gamma, \norm{\Psiop}_\infty, \norm{ \nabla \Pot (\Xeq) }$ a Lipschitz constant of $\nabla \Pot$, $C > 0$ as in \eqref{eq:potgrwthbd} and the control operator $\cB$) such that if 
                \begin{align*}
                    0 < c \leq \overline{c}, \, \frac{\gamma \alpha}{(\Delta t)^3 (1+c(\Delta t)^2) \norm{\cB \cB^*} } \leq \rho \leq \frac{\alpha }{(\Delta t)^3 (1+c(\Delta t)^2) \norm{\cB \cB^*}} \text{ and } 0 < \Delta t \leq h  , 
                \end{align*}
                then $H_{\text{cl}, 3 \Delta t}$ is bounded from below and for $l \in \N_0$, we have 
                \begin{align}
                    H_{\text{cl}, 3 \Delta t} (\charX^{l+1} , \charV^{l+1}) 
                    &\leq  H_{\text{cl}, 3 \Delta t} (\charX^{l} , \charV^{l}) 
                     - \nu c_1 \Delta t \norm{\charV^l}^2 
                     - \frac{(\Delta t)^2}{2} \norm{T \big(\nabla \Pot(\charX^l) + \nabla \hat{\Pot}(\charX^l) \big) }^2 \label{eq:semexHepsboundsurj} \, .
                \end{align}
        \end{Thm}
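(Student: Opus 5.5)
We expand $H_{\text{cl},\varepsilon}$ with $\varepsilon = 3\Delta t$ along one step of \eqref{eq:semexeulerits}. Write $G^l \coloneq \nabla \Pot(\charX^l) + \nabla \hat{\Pot}(\charX^l)$ and $A^l \coloneq \Psiop(\charX^l) + c_1 T^{-\frac{1}{2}}\cB\cB^* T^{-\frac{1}{2}}$. Then $\charV^{l+1} - \charV^l = -\Delta t\, T(A^l \charV^l + G^l)$ and $\charX^{l+1}-\charX^l = \Delta t \charV^{l+1}$.

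\textbf{Step 1: kinetic part.} Using the identity
\[
\tfrac12\langle a,T^{-1}a\rangle - \tfrac12\langle b,T^{-1}b\rangle = \langle b, T^{-1}(a-b)\rangle + \tfrac12\langle a-b,T^{-1}(a-b)\rangle,
\]
the kinetic part of $H_{\text{cl}}$ changes by
\[
-\Delta t\langle \charV^l, A^l\charV^l\rangle - \Delta t\langle \charV^l, G^l\rangle + \tfrac{(\Delta t)^2}{2}\langle A^l\charV^l+G^l, T(A^l\charV^l+G^l)\rangle .
\]

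\textbf{Step 2: potential part.} Since $\nabla\Pot$ is Lipschitz and $\nabla\hat\Pot$ is affine with norm $\lesssim c_3\norm{\cB}^2\norm{T^{-1}}$, the gradient $\Pot+\hat\Pot$ is $L'$-Lipschitz. Hence the potential part changes by at most
\[
\Delta t\langle G^l,\charV^{l+1}\rangle + \tfrac{L'}{2}(\Delta t)^2\norm{\charV^{l+1}}^2 .
\]
Substituting $\charV^{l+1}$, the term $\Delta t\langle G^l,\charV^l\rangle$ cancels the cross term from Step 1. What remains is $-(\Delta t)^2\langle G^l, T A^l\charV^l\rangle - (\Delta t)^2\langle G^l,TG^l\rangle$. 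This is where the negative term in $(\Delta t)^2\norm{TG^l}^2$ first appears, since $\langle G,TG\rangle \ge \norm{T}^{-1}\norm{TG}^2$ with $\norm{T}\le 2$ and $T$ near $I$.

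\textbf{Step 3: the correction term $\varepsilon\langle \charV, TG\rangle$.} This term supplies the main $-(\Delta t)^2\norm{TG}^2$ dissipation. Its increment is
\[
\varepsilon\langle \charV^{l+1}-\charV^l, TG^l\rangle + \varepsilon\langle \charV^{l+1}, T(G^{l+1}-G^l)\rangle .
\]
The first piece equals $-3(\Delta t)^2\norm{TG^l}^2 - 3(\Delta t)^2\langle TA^l\charV^l, TG^l\rangle$. The second piece is bounded by $3(\Delta t)^2 L'\norm{T}\norm{\charV^{l+1}}^2$.

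\textbf{Step 4: absorbing the cross and higher-order terms.} All cross terms $(\Delta t)^2\langle A\charV, G\rangle$ are handled by Young's inequality: part goes into $-(\Delta t)^2\norm{TG}^2$ and part into $(\Delta t)^2\norm{A^l\charV^l}^2$. The latter is $\lesssim (\Delta t)^2(\norm{\Psiop}_\infty\langle \charV,\Psiop \charV\rangle + c_1^2\norm{\cB}^2\norm{T^{-1/2}}^2\norm{\cB^*T^{-1/2}\charV}^2)$.

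The delicate point is the $c_1$-part. With $\rho$ of order $\alpha/((\Delta t)^3(1+c\Delta t^2)\norm{\cB\cB^*})$ we have $c_1=\rho(\Delta t)^2(1+c\Delta t^2)\approx \alpha/(\Delta t\norm{\cB\cB^*})$, so $c_1\Delta t\approx \alpha/\norm{\cB\cB^*}$ is \emph{not} small. The term $(\Delta t)^2c_1^2\norm{\cB}^2\norm{\cB^*T^{-1/2}\charV}^2$ is therefore comparable to $c_1\Delta t\,\alpha\norm{\cB^*T^{-1/2}\charV}^2$. This is why $\alpha$ must be a small explicit constant: all such contributions (from $\tfrac12 T$, from the $\varepsilon=3\Delta t$ cross terms, from Young's inequality) are collected and required to total at most $\tfrac34$ of $c_1\Delta t\norm{\cB^*T^{-1/2}\charV}^2$, leaving $-\tfrac{c_1\Delta t}{4}$. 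Doing this bookkeeping is how one arrives at $\alpha=1/66$; note also $c_2\norm{\cB\cB^*}\le\alpha$, so $\norm{T^{\pm1}}$ is close to $1$.

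The terms $\norm{\charV^{l+1}}^2$ are bounded by $2\norm{\charV^l}^2 + 2(\Delta t)^2\norm{T(A\charV+G)}^2$. The factor $\norm{\charV^l}^2$ is split into $(I-P_{\ks})$ and $P_{\ks}$ parts. The $P_{\ks}$ part is controlled by $\norm{\cB^*T^{-1/2}\charV}^2$ via Corollary \ref{Cor:Blowbound}, which costs the factor $\kappa^{-2}$. With $c_1\gtrsim\gamma\alpha/(\Delta t\norm{\cB\cB^*})$, that cost is absorbed once $\Delta t\le h$ is small; this is where $h$ depends on $\gamma$, $\bar c$, $\underline\psi$, $\norm{\Psiop}_\infty$, $L$ and $\cB$. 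The remaining $O((\Delta t)^2\underline\psi^{-1})$ terms are absorbed into $-\tfrac{\Delta t}{2}\underline\psi\norm{(I-P_{\ks})\charV}^2$.

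\textbf{Step 5: lower bound on $H_{\text{cl},3\Delta t}$.} We have $|3\Delta t\langle \charV,TG\rangle| \le \tfrac14\langle \charV,T^{-1}\charV\rangle + C(\Delta t)^2\norm{G}^2$. Using \eqref{eq:potgrwthbd}, $\norm{G}^2\lesssim 1+\Pot-\underline{\mathcal V}+\hat\Pot$, where the quadratic $\hat\Pot$ dominates $\norm{\nabla\hat\Pot}^2$ up to a factor $c_3\norm{\cB}^2$. For small $\Delta t$ this gives $H_{\text{cl},3\Delta t}\ge \tfrac12 H_{\text{cl}}-C$, which is bounded below.

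\textbf{Part (ii).} Surjectivity lets us take $\ks=\HS$ and use \eqref{eq:Blowbound} throughout. The bound $-\tfrac{c_1\Delta t}{4}\norm{\cB^*T^{-1/2}\charV}^2\le -\tfrac{c_1\Delta t\sigma^2}{8}\norm{\charV}^2$ gives $\nu=\sigma^2/8$, with no need for $\underline\psi$. The lower bound on $\Pot+\hat\Pot$ uses the argument of Corollary \ref{cor:IDAPBCconv}, which is why $h$ now depends on $\norm{\nabla\Pot(\Xeq)}$ and requires $c>0$.

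I expect the main obstacle to be the bookkeeping in Step 4: the non-small quantity $c_1\Delta t$ must be controlled uniformly, and the resulting constraints are what fix $\alpha$.
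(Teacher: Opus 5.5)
Your proposal is correct and follows essentially the same argument as the paper's appendix proof. You expand the one-step change of $H_{\text{cl},3\Delta t}$, control the second-order terms via the Lipschitz constant of $\nabla \Pot + \nabla \hat{\Pot}$, Young's inequality and the splitting $\charV = (I-P_{\ks})\charV + P_{\ks}\charV$ with Corollary~\ref{Cor:Blowbound}, and use $c_2 \norm{\cB\cB^*} \leq \alpha$ for the non-small $c_1 \Delta t$ contributions, $\Delta t \leq h$ for the rest, and Young's inequality with \eqref{eq:potgrwthbd} (plus the coercivity argument of Corollary~\ref{cor:IDAPBCconv} in the surjective case) for lower boundedness. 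Your grouping (descent lemma, and the correction increment written as $\varepsilon\langle \charV^{l+1}-\charV^l, TG^l\rangle + \varepsilon\langle \charV^{l+1}, T(G^{l+1}-G^l)\rangle$) only reorganizes the paper's terms \RN{1}--\RN{8}, but it does show that the second-order kinetic and linearized potential parts combine exactly to $\tfrac{(\Delta t)^2}{2}\langle A^l\charV^l, TA^l\charV^l\rangle - \tfrac{(\Delta t)^2}{2}\langle G^l, TG^l\rangle$, a negative term the paper gives away by bounding \RN{5} and \RN{6} separately.
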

        \begin{proof}[Proof outline]
            The proof of Theorem \ref{thm:semexdiscrtimeHepsbal} is rather technical, so we only summarize its main steps here. The full proof is given in the Appendix \ref{sec:appendix}.
            We start by computing the discrete time energy balance of the modified Hamiltonian, that is, computing 
            $H_{\text{cl}, \varepsilon}(\charX^{l+1}, \charV^{l+1}) - H_{\text{cl}, \varepsilon}(\charX^l, \charV^l)$. 
            The result consists of eight terms, each of which is bounded above separately. We conclude by choosing the parameters $\rho, \Delta t, c$ and $\varepsilon = 3 \Delta t$ such that \eqref{eq:semexHepsbound} respectively \eqref{eq:semexHepsboundsurj} holds and $H_{\text{cl}, 3 \Delta t}$ is bounded from below. 
        \end{proof}
        \begin{Rem}
            If $\nabla \Pot + \nabla \hat{\Pot}$ is Lipschitz, the modified Hamiltonian can also be used as a Lyapunov function for the continuous time system \eqref{eq:PHclosedloop}. To do so, one either has to require that $\Pot + \hat{\Pot}$ is $C^2$, or, to avoid this assumption, define $H_{\text{cl}, \varepsilon}$ as 
            \begin{align*}
                H_{\text{cl}, \varepsilon}(\charX, \charV) = \frac{1}{2} \langle \charV, T^{-1} \charV \rangle + \Pot(\charX + \varepsilon \charV) + \hat{\Pot}(\charX + \varepsilon \charV) \, .
            \end{align*}
            But since it is not apparent how to formulate the closed-loop system \eqref{eq:PHclosedloop} as a port-Hamiltonian system using $\nabla H_{\text{cl}, \varepsilon}$, we have avoided this approach on the continuous time level. Moreover, we have seen in Corollary \ref{cor:feedbvelconv} that it is sufficient to require uniform continuity of $\nabla \Pot$ instead of Lipschitz continuity. 
        \end{Rem}
        \begin{Cor}[Convergence of the discrete time system \eqref{eq:semexeulerits}]\label{thm:semexeulerdiscrtimeconv}
            We consider the setting given in Theorem \ref{thm:semexdiscrtimeHepsbal}.
            \thmpart\label{it:pslowb} We suppose that \eqref{eq:Psioplowbd} holds and let $\hat{\Pot}$ be given by \eqref{eq:semiexnonstatad}. Let \par \noindent
            $(\charX^l)_{l \in \N_0 }, \, (\charV^l)_{l \in \N_0} \subseteq \HS $ be iterates of the discrete time closed-loop system \eqref{eq:semexeulerits}.
                Then there exists a constant $0 < \alpha < 1$ such that for every $\gamma \in (0,1]$ and all $\overline{c} \geq 0$, there exists $0 < h$ such that if $\Delta t, c $ and $\rho$ satisfy 
            \begin{align}
                0 < \Delta t \leq h, \, \frac{\gamma \alpha}{(\Delta t)^3 (1+ c(\Delta t)^2) \norm{\cB \cB^*}}  \leq \rho \leq \frac{\alpha}{(\Delta t)^3 (1+ c(\Delta t)^2) \norm{\cB \cB^*}} \text{ and } 0 \leq c \leq \overline{c} , \label{eq:parrestr}
            \end{align}
             then $\charV^l \to 0$ and $\nabla \Pot (\charX^l) + c \rho (\Delta t)^3 T^{-1} \cB \cB^*(\charX^l - \Xeq) \to 0 $ in $\HS$ as $l \to \infty$.
                \thmpart\label{it:Bsurj} Let $\cB$ be surjective and $\hat{\Pot}$ be given by \eqref{eq:semiexstatad}. Let $(\charX^l)_{l \in \N_0 }, \, (\charV^l)_{l \in \N_0} \subseteq \HS $ be iterates of the closed-loop system \eqref{eq:semexeulerits}.
                Then there exist constants $0 < \alpha < 1$ and $K > 0$ such that for all $\gamma \in (0,1]$ and all $\overline{c}, \underline{c}$ satisfying
                \begin{align}
                    \overline{c} \geq \underline{c} > \frac{K \norm{\cB \cB^*}}{\gamma \alpha} > 0 \, , \label{eq:barccond}
                \end{align}
                there exists $0 < h$ such that if $\Delta t, \rho$ and $c$ satisfy 
                \begin{align}
                    0 < \Delta t \leq h, \,
                       \frac{\gamma \alpha}{(\Delta t)^3 (1+ c(\Delta t)^2) \norm{\cB \cB^*}}  &\leq \rho 
                       \leq \frac{\alpha}{(\Delta t)^3 (1+ c(\Delta t)^2) \norm{\cB \cB^*}}
                      \text{ and } 
                      \underline{c} \leq c \leq \overline{c} , \label{eq:parrestrBsurj}
                \end{align}
                then the iterates $(\charX^l)_{l \in \N_0 }, (\charV^l)_{l \in \N_0} \subseteq \HS$ of the discrete time closed-loop system \eqref{eq:semexeulerits} satisfy $\charV^l \to  0$ and $\charX^l \to \Xeq $ in $\HS$ as $l \to \infty$.
        \end{Cor}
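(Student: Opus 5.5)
The plan is to take the discrete energy inequality of Theorem \ref{thm:semexdiscrtimeHepsbal}, telescope it, and then transfer the resulting summability to convergence of the iterates.

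\textbf{Part (i).} Choose $\alpha$, $\gamma$, $\overline{c}$ and $h$ as in Theorem \ref{thm:semexdiscrtimeHepsbal}\ref{ass:Hepsdecr}. Under \eqref{eq:parrestr} the sequence $H_{\text{cl},3\Delta t}(\charX^l,\charV^l)$ is nonincreasing and bounded from below. Summing \eqref{eq:semexHepsbound} over $l$ then gives
\begin{align*}
\sum_{l=0}^\infty \Big( \tfrac{\underline{\psi}}{2}\norm{(I-P_{\ks})\charV^l}^2 + \tfrac{c_1}{4}\norm{\cB^*T^{-\frac12}\charV^l}^2 + \tfrac{\Delta t}{2}\norm{T(\nabla\Pot(\charX^l)+\nabla\hat{\Pot}(\charX^l))}^2\Big) < \infty .
\end{align*}
Each summand therefore tends to zero.

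We next recover the full norm of $\charV^l$. Corollary \ref{Cor:Blowbound} gives $\norm{\cB^*T^{-\frac12}\charV^l}\ge \frac{\kappa}{\sqrt2}\norm{P_{\ks}\charV^l}$. Since $c_1>0$ (because $\rho>0$), this shows $\norm{P_{\ks}\charV^l}\to0$. Combined with $(I-P_{\ks})\charV^l\to0$, we get $\charV^l\to0$; this is the same splitting argument as in \eqref{eq:verifyposdef}.

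For the force term, $T$ has a bounded inverse with $\norm{T^{-1}}\le (1-\alpha)^{-1}$, because $c_2\norm{\cB\cB^*}=\rho(\Delta t)^3(1+c(\Delta t)^2)\norm{\cB\cB^*}\le\alpha<1$. Hence $\nabla\Pot(\charX^l)+\nabla\hat{\Pot}(\charX^l)\to0$. It remains to identify $\nabla\hat{\Pot}$ with the form in the statement. For $\hat{\Pot}$ in \eqref{eq:semiexnonstatad} we have
\begin{align*}
\nabla\hat{\Pot}(\charX)=c_3T^{-\frac12}\cB\cB^*T^{-\frac12}(\charX-\Xeq)=c_3T^{-1}\cB\cB^*(\charX-\Xeq),
\end{align*}
where the second equality uses that $T^{-\frac12}$ commutes with $\cB\cB^*$. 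Finally $c_3=c\rho(\Delta t)^3$, which is exactly the expression in the statement.

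\textbf{Part (ii).} Use Theorem \ref{thm:semexdiscrtimeHepsbal}\ref{ass:HepsdecrBsurj} in the same way. Telescoping \eqref{eq:semexHepsboundsurj} gives $\sum_l\norm{\charV^l}^2<\infty$ and $\sum_l\norm{T(\nabla\Pot+\nabla\hat{\Pot})(\charX^l)}^2<\infty$. As in part (i), this yields $\charV^l\to0$ and $\nabla\Pot(\charX^l)+\nabla\hat{\Pot}(\charX^l)\to0$.

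To get $\charX^l\to\Xeq$ we apply Lemma \ref{lemma:grandnormlowbound}: if $c_3>K$, then
\begin{align*}
\norm{\charX^l-\Xeq}\le \frac{2}{\sigma^2(c_3-K)}\norm{\nabla\Pot(\charX^l)+\nabla\hat{\Pot}(\charX^l)}\to0 .
\end{align*}

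\textbf{Main obstacle.} The step needing care is to make $c_3>K$ hold uniformly for all admissible parameters, with $K$ from Lemma \ref{lemma:grandnormlowbound}. Using the lower bound on $\rho$ in \eqref{eq:parrestrBsurj},
\begin{align*}
c_3=c\rho(\Delta t)^3\ge \frac{c\,\gamma\alpha}{(1+c(\Delta t)^2)\norm{\cB\cB^*}}\ge \frac{\underline{c}\,\gamma\alpha}{(1+\overline{c}h^2)\norm{\cB\cB^*}} .
\end{align*}
By \eqref{eq:barccond}, $\underline{c}\gamma\alpha/\norm{\cB\cB^*}>K$ strictly. Hence we can shrink $h$, beyond what Theorem \ref{thm:semexdiscrtimeHepsbal} requires, until $1+\overline{c}h^2<\underline{c}\gamma\alpha/(K\norm{\cB\cB^*})$. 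This guarantees $c_3>K$ for every admissible choice. The constant $K$ depends only on $\cB$ and the Lipschitz constant of $\nabla\Pot$, so this choice of $h$ is legitimate.
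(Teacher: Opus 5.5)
Your proposal is correct and follows essentially the same route as the paper: you telescope the modified-energy inequalities of Theorem~\ref{thm:semexdiscrtimeHepsbal}, recover $P_{\ks}\charV^l\to 0$ via Corollary~\ref{Cor:Blowbound}, and in part (ii) shrink $h$ so that $c_3>K$ uniformly before invoking Lemma~\ref{lemma:grandnormlowbound}. Your explicit identification $\nabla\hat{\Pot}(\charX)=c\rho(\Delta t)^3T^{-1}\cB\cB^*(\charX-\Xeq)$ and your direct use of \eqref{eq:gradnormlowbound} only spell out steps the paper leaves implicit.
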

        \begin{proof}
            We start by proving Assertion \ref{it:pslowb}. Fix $\gamma \in (0,1]$ and $\overline{c} \geq 0$. According to Theorem \ref{thm:semexdiscrtimeHepsbal}, there exists $0 < h$ such that if the parameters $c, \rho, \Delta t$ satisfy \eqref{eq:parrestr}, then \eqref{eq:semexHepsbound} holds and $H_{\text{cl}, 3 \Delta t}$ is bounded from below. Hence, for $P \in \N_0$, we have 
            \begin{align*}
                  &H_{\text{cl}, 3 \Delta t} (\charX^0, \charV^0) - H_{\text{cl}, 3 \Delta t} (\charX^P, \charV^P)
                = \sum_{l=0}^{P-1} \big[ H_{\text{cl}, 3 \Delta t} (\charX^l, \charV^l) - H_{\text{cl}, 3 \Delta t} (\charX^{l+1}, \charV^{l+1} ) \big] \\
                &\geq  \Delta t \sum_{l=0}^{P-1} \Big[ 
                \frac{1}{2} \underline{\psi} \norm{(I-P_{\ks} )\charV^l}^2
                     + \frac{c_1}{4} \norm{\cB^*T^{-\frac{1}{2}} \charV^l}^2  
                     +
                     \frac{\Delta t}{2} \norm{T (\nabla \Pot(\charX^l) + \nabla \hat{\Pot}(\charX^l) )}^2 \Big] \, .
            \end{align*}
            Since $H_{\text{cl}, 3 \Delta t}$ is bounded from below, it follows that 
            \begin{align*}
                \sum_{l=0}^{\infty} \Big[ 
                \frac{1}{2} \underline{\psi} \norm{(I-P_{\ks} )\charV^l}^2
                     + \frac{c_1}{4} \norm{\cB^*T^{-\frac{1}{2}} \charV^l}^2   +
                     \frac{\Delta t}{2} \norm{T (\nabla \Pot(\charX^l) + \nabla \hat{\Pot}(\charX^l) ) }^2 \Big] 
                     < \infty .
            \end{align*}
            In particular, we obtain that 
            \begin{align*}
                (I-P_{\ks})\charV^l , \ \cB^*T^{-\frac{1}{2}} \charV^l , \ \nabla \Pot(\charX^l) + \nabla \hat{\Pot}(\charX^l) \to 0 \text{ in } \HS \text{ as } l \to \infty \, .
            \end{align*}
            According to Corollary \ref{Cor:Blowbound}, convergence of $\cB^*T^{-\frac{1}{2}} \charV^l$ implies that $P_{\ks} \charV^l \to 0$ and therefore we obtain $\charV^l \to 0$ as $l \to \infty$.
            Now, let us prove Assertion \ref{it:Bsurj}. Fix $\gamma \in (0,1]$ and $\overline{c} \geq 0$. By Lemma \ref{lemma:grandnormlowbound}, there exists $\sigma, \, K > 0$ such that \eqref{eq:gradnormlowbound} holds for $c_3 > K$. Theorem \ref{thm:semexdiscrtimeHepsbal} implies that, there exists $0 < h$ such that if $c, \rho, \Delta t$ satisfy \eqref{eq:parrestrBsurj}, then \eqref{eq:semexHepsboundsurj} holds and $H_{\text{cl}, 3 \Delta t}$ is bounded from below. Moreover, we have 
            \begin{align*}
                c_3 = c \rho (\Delta t)^3 \geq c \frac{\gamma \alpha }{\norm{\cB \cB^*} (1 + c (\Delta t)^2) } 
                \geq \underline{c} \frac{\gamma \alpha }{\norm{\cB \cB^*} (1 + \overline{c} h^2) } \, .
            \end{align*}
            By possibly taking a smaller value of $h$, we can therefore assume that for $0 < \Delta t \leq h$, we have $c_3 > K$. 
            By using a similar argument as in the first assertion, we obtain that 
            \begin{align*}
                \sum_{l=0}^{\infty} \Big[ \nu c_1 \norm{\charV^l}^2
                    + \frac{\Delta t}{2} \norm{T (\nabla \Pot(\charX^l) + \nabla \hat{\Pot}(\charX^l) ) }^2 \Big] < \infty \, .
            \end{align*}
            Since $T$ has a bounded inverse and due to the bound \eqref{eq:gradnormlowbound}, it follows that 
            \begin{align*}
                 \sum_{l=0}^{\infty} \Big[ \nu c_1 \norm{\charV^l}^2
                    + \frac{\Delta t}{8\norm{T}^{-1}} \sigma^4 (c_3 - K)^2 \norm{ \charX^l - \Xeq }^2 \Big] < \infty 
            \end{align*}
            and therefore $\charV^l \to 0$ and $\charX^l \to \Xeq$ in $\HS$ as $l \to \infty$. 
        \end{proof}

\section{Application to the Cucker--Smale model}\label{sec:aplications}

    In this final section, we apply our results to the Cucker--Smale model. 
            To do so, we impose conditions on the alignment function $\psi$ and the potential $\mathcal{V}$ which ensure that Assumption \ref{ass:control} and the additional assumptions in the previous section are satisfied.
            \begin{Notation}
        Let $\mu_0$ be a Borel probability measure on $\R^d \times \R^d$. For $\charV \in \HS \coloneq L^2(\msolinit; \R^d)$, we define the mean-velocity $\overline{\charV} \in \R^d$ via 
        \begin{align*}
            \overline{\charV} \coloneq \int \charV(x,v) \, \mathrm{d}\msolinit(x,v) \, .
        \end{align*}
        We denote the space of constant functions $\{v \mathds{1} \mid v \in \R^d \}$ by $\cnst$. Then the orthogonal projection in $L^2(\msolinit; \R^d)$ onto $\cnst$ reads $P_{\cnst} \charV = \overline{\charV} \mathds{1}$.
    \end{Notation}
    \begin{Prop}\label{prop:PsiandPotprop}
        Let $\psi \in C(\R_+; \R_+)$ be bounded and $\mathcal{V} \in C^1(\R^d; \R)$ with $\nabla \mathcal{V}$ antisymmetric. Let $\B \in \R^{d \times d}$ be symmetric and positive semidefinite. Suppose that $\nabla \mathcal{V}$ has at most linear growth, that is, there exists $K \geq 0$ such that 
        \begin{align}
            \abs{\nabla \mathcal{V} (x) } \leq K (1+ \abs{x}) \text{ for all } x \in \R^d \, . \label{eq:lingrwthV}
        \end{align}
        Let $\msolinit$ be a compactly supported Borel probability measure on $\R^d \times \R^d$ and let $\Psiop$ and $\Pot$ be defined by \eqref{eq:defCSPsiop} and \eqref{eq:PotextAg}. Let $\wps $ denote either $L^\infty(\msolinit; \R^d) $ or $ C(\supp(\msolinit) ; \R^d) $ endowed with the (essential) supremum norm. Then the following statements hold: 
        \thmpart\label{it:psisa} For every $\charX \in L^2(\msolinit; \R^d)$, $\Psiop(\charX) \in \mathscr{L}(L^2(\msolinit; \R^d))$ is self-adjoint, positive semidefinite and satisfies $\norm{\Psiop(\charX)} \leq \norm{\psi}_\infty$. Moreover,  $\cnst \subseteq \ker(\Psiop(\charX))$ and it holds that 
            \begin{align}
                \underline{\psi} \norm{\charV - \overline{\charV} \mathds{1}}^2 \leq \langle \charV, \Psiop(\charX) \charV \rangle \leq \norm{\psi}_\infty \norm{\charV - \overline{\charV} \mathds{1}}^2 \, \text{for} \, \charV \in L^2(\msolinit; \R^d), \,  \underline{\psi} \coloneq \inf_{x \in \R_+} \psi(x) \, . \nonumber
            \end{align}
            In particular, if $\underline{\psi} > 0$, then \eqref{eq:Psioplowbd} holds for $\ks = \cnst$.
            \thmpart\label{it:psirestr} For $X \in \wps$, $\Psiop(\charX)$ restricts to a bounded linear operator on $\wps$ and the map $\Psiop(\cdot) \colon \wps \to \mathscr{L}(\wps)$ is bounded. If $\psi$ is locally Lipschitz, then the map $\charX \mapsto \Psiop(\charX)$ is also locally Lipschitz from $\wps$ to $\mathscr{L}(\wps )$.
            \thmpart\label{it:VC1} The potential $\Pot$ is well-defined, $\Pot \in C^1(L^2(\msolinit;\R^d);\R)$ and $\nabla \Pot$ is given by 
            \begin{align}
                \nabla \Pot(\charX)(x,v) = \int \nabla \mathcal{V}(\charX(x,v) - \charX(x^\prime, v^\prime) ) \, \mathrm{d}\msolinit(x^\prime,v^\prime) 
                + \B \charX(x,v)  \, . \label{eq:nablaPot}
            \end{align} 
            \thmpart\label{it:Vgrwth} If $\mathcal{V}$ is bounded from below by a constant $\underline{\mathcal{V}}$ and there exists $C \geq 0$ such that 
            \begin{align}
                \abs{\nabla \mathcal{V}(x)}^2 \leq C (1 + \mathcal{V}(x) - \underline{\mathcal{V}} )  \text{ for all } x \in \R^d , \label{eq:calVgrwth}
            \end{align}
            then \eqref{eq:potgrwthbd} is satisfied (with constant $C$ replaced by $ 4 \max \{ C, \norm{B} \} $).
            \thmpart\label{it:VlocLip} For $\charX \in \wps$, we have $\nabla \Pot(\charX) \in \wps$ and $\nabla \Pot$ satisfies the $\wps$-linear growth bound 
            \begin{align}
                \norm{\nabla \Pot( \charX) }_{\wps} \leq \max \{ 2 K, \norm{B} \} (1 + \norm{\charX}_{\wps} ) \label{eq:wpslingrwth} \, .
            \end{align}
            Moreover, if $\nabla \mathcal{V}$ is locally Lipschitz, then $\nabla \Pot \colon \wps \to \wps$ is locally Lipschitz.
            \thmpart\label{it:VLip} If $\nabla \mathcal{V}$ is Lipschitz with Lipschitz constant $L$, then \par \noindent
            $\nabla \Pot  \in C(L^2(\msolinit; \R^d) ; L^2(\msolinit; \R^d) )$ is Lipschitz with Lipschitz constant $2L + \norm{\B}$.
            \thmpart\label{it:Vunifcont} If $\nabla \mathcal{V} \in C(\R^d; \R^d)$ is uniformly continuous and bounded, then \par \noindent
            $\nabla \Pot  \in C(L^2(\msolinit; \R^d) ; L^2(\msolinit; \R^d) )$ is uniformly continuous.
            \thmpart\label{it:Bextag} If the control operator $\cB$ is given by \eqref{eq:cbextAg}, $\wps$ is an invariant subspace of $\cB \cB^*$. 
    \end{Prop}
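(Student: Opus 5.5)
The plan is to verify the eight items one at a time, each by a direct estimate on the integral kernels. Throughout, write $D_\charX(x,v,x',v') \coloneq \charX(x,v)-\charX(x',v')$ and use that $\msolinit$ is a probability measure.

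For \ref{it:psisa}, boundedness follows from Jensen/Cauchy--Schwarz: $\abs{(\Psiop(\charX)\charV)(x,v)} \le \norm{\psi}_\infty\int\abs{\charV(x,v)-\charV(x',v')}\,\mathrm d\msolinit$. Squaring and integrating gives a bound on $\norm{\Psiop(\charX)\charV}^2$ by $\norm{\psi}_\infty^2\iint\abs{\charV-\charV'}^2$. I would rather get the sharp bound from the symmetrized quadratic form. Since the kernel $\psi(\abs{D_\charX})$ is symmetric, swapping variables gives
\[\langle W,\Psiop(\charX)\charV\rangle=\tfrac12\iint\psi(\abs{D_\charX})\,(W-W')\cdot(\charV-\charV')\,\mathrm d\msolinit\,\mathrm d\msolinit .\]
This yields self-adjointness and positive semidefiniteness at once. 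The two-sided bounds then follow from $\underline\psi\le\psi\le\norm{\psi}_\infty$ together with the identity $\tfrac12\iint\abs{\charV-\charV'}^2=\norm{\charV-\overline\charV\mathds 1}^2$. The operator norm bound $\norm{\Psiop(\charX)}\le\norm{\psi}_\infty$ follows because a positive self-adjoint operator has norm equal to the supremum of its form, and $\norm{\charV-\overline\charV\mathds1}\le\norm{\charV}$. Constants lie in the kernel trivially. Since $P_{\cnst}\charV=\overline\charV\mathds1$, \eqref{eq:Psioplowbd} holds with $\ks=\cnst$. Note that $\cnst\subseteq\range(\cB)$ is not part of this item.

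For \ref{it:psirestr}, take $\charX\in\wps$. The sup-norm bound is $\norm{\Psiop(\charX)\charV}_\wps\le2\norm{\psi}_\infty\norm{\charV}_\wps$. When $\wps=C(\supp\msolinit)$, continuity of $\Psiop(\charX)\charV$ follows by dominated convergence, using continuity of $\psi$ and compactness of the support. For local Lipschitz continuity, on a ball $\norm{\charX}_\wps,\norm{Y}_\wps\le r$ the arguments of $\psi$ stay in $[0,2r]$. Hence $\abs{\psi(\abs{D_\charX})-\psi(\abs{D_Y})}\le L_r\,2\norm{\charX-Y}_\wps$, which gives an operator-norm bound $4L_r\norm{\charX-Y}_\wps$.

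For \ref{it:VC1}, well-definedness uses the linear growth \eqref{eq:lingrwthV}, which implies $\abs{\mathcal V(z)}\lesssim1+\abs z^2$, so $\Pot$ is finite on $L^2$. The candidate gradient is $G(\charX)(x,v)=\int\nabla\mathcal V(D_\charX)\,\mathrm d\msolinit'+\B\charX$; its two halves coincide by antisymmetry of $\nabla\mathcal V$. I expect this to be the main technical step, because $\nabla\mathcal V$ is only continuous and not Lipschitz. The plan is as follows.
\begin{enumerate}
\item Write $\Pot(\charX+h)-\Pot(\charX)-\langle G(\charX),h\rangle$ as $\tfrac12\iint\int_0^1[\nabla\mathcal V(D_\charX+sD_h)-\nabla\mathcal V(D_\charX)]\cdot D_h\,\mathrm ds$.
\item Apply Cauchy--Schwarz to bound this by $\norm{D_h}_{L^2(\msolinit\otimes\msolinit)}$ times the $L^2$ norm of the bracket.
\item Show the bracket tends to $0$ in $L^2$ as $h\to0$. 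I would argue this along subsequences converging a.e.: continuity of $\nabla\mathcal V$ gives pointwise convergence, and linear growth gives $L^2$-domination, so Vitali/dominated convergence applies.
\end{enumerate}
The same argument shows that $G$ is continuous $L^2\to L^2$, so $\Pot\in C^1$. The quadratic $\B$-term is routine.

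For \ref{it:Vgrwth}, lower boundedness of $\Pot$ is immediate since $\B\ge0$. For the gradient bound, Jensen gives $\abs{\int\nabla\mathcal V\,\mathrm d\msolinit'}^2\le\int\abs{\nabla\mathcal V}^2\le C(1+\int\mathcal V-\underline{\mathcal V})$. After integrating, this is controlled by $C(1+2\Pot_{\mathcal V}-\underline{\mathcal V})$. The $\B$-term satisfies $\abs{\B\charX}^2\le\norm{\B}\,\charX\cdot\B\charX$. Combining with $(a+b)^2\le2a^2+2b^2$ gives the constant $4\max\{C,\norm{\B}\}$.

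For \ref{it:VlocLip}, linear growth of $\nabla\mathcal V$ gives $\abs{\nabla\Pot(\charX)(x,v)}\le K(1+2\norm{\charX}_\wps)+\norm{\B}\norm{\charX}_\wps$, from which \eqref{eq:wpslingrwth} follows. Continuity in the $C$-case is again dominated convergence, and local Lipschitz continuity is pointwise on balls, as before.

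Items \ref{it:VLip} and \ref{it:Vunifcont} are pointwise kernel estimates followed by Jensen:
\[\abs{\nabla\mathcal V(D_\charX)-\nabla\mathcal V(D_Y)}\le L\big(\abs{(\charX-Y)(x,v)}+\abs{(\charX-Y)(x',v')}\big),\]
which gives Lipschitz constant $2L+\norm{\B}$. For \ref{it:Vunifcont}, given $\varepsilon$, split the domain into the set where $\abs{D_\charX-D_Y}<\delta$ and its complement. The complement has small measure by Chebyshev, and there boundedness of $\nabla\mathcal V$ controls the contribution.

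Finally, for \ref{it:Bextag}, $\cB^*\charV=\B\overline\charV$. Hence $\cB\cB^*\charV=(\B^2\overline\charV)\mathds1$, a constant function, which lies in $\wps$.
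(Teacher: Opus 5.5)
Your item-by-item direct computation is what the paper means by ``follows by direct computation'' (it gives no proof). Items (i)--(iv) and (vi)--(viii) go through as you describe. In particular, the symmetrized quadratic form in (i) and the subsequence/Vitali argument for the merely continuous, linearly growing $\nabla\mathcal V$ in (iii) are the right tools.

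The step that fails is the last sentence of item (v). From $\abs{\nabla\Pot(\charX)(x,v)}\le K(1+2\norm{\charX}_\wps)+\norm{\B}\norm{\charX}_\wps$ you cannot conclude \eqref{eq:wpslingrwth}. If $K>0$ and $\norm{\B}>0$, the slope $2K+\norm{\B}$ is strictly larger than $\max\{2K,\norm{\B}\}$, so $K+(2K+\norm{\B})r\le\max\{2K,\norm{\B}\}(1+r)$ fails for large $r$. A sharper argument cannot save it either, because the stated constant is false. Take $d=1$ and $\mathcal V(x)=\tfrac K2x^2$, so $\nabla\mathcal V(x)=Kx$ is antisymmetric and satisfies \eqref{eq:lingrwthV}. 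Let $\B=b>0$, $\msolinit=\tfrac12(\delta_{p_1}+\delta_{p_2})$ with $p_1\neq p_2$, and $\charX(p_1)=a=-\charX(p_2)$ with $a>0$. Then $\nabla\Pot(\charX)(p_1)=\tfrac12K\cdot 2a+ba=(K+b)a=-\nabla\Pot(\charX)(p_2)$. For $K=1$, $b=2$ this gives $\norm{\nabla\Pot(\charX)}_\wps=3a$ and $\norm{\charX}_\wps=a$, whereas \eqref{eq:wpslingrwth} asserts $3a\le 2(1+a)$, which is false for $a>2$.

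What your pointwise estimate does prove is $\norm{\nabla\Pot(\charX)}_\wps\le(2K+\norm{\B})(1+\norm{\charX}_\wps)$, and you should state item (v) with that constant. Nothing downstream is affected, since Theorem~\ref{Thm:wpcl} only needs some linear-growth constant.

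A related precision point concerns (iv). Your estimate $C(1+2\Pot_{\mathcal V}-\underline{\mathcal V})$, with $\Pot_{\mathcal V}$ the interaction part of $\Pot$, leads to $\norm{\nabla\Pot(\charX)}^2\le 4\max\{C,\norm{\B}\}(1+\Pot(\charX)-\tfrac12\underline{\mathcal V})$. So the lower bound of $\Pot$ that enters \eqref{eq:potgrwthbd} is $\tfrac12\underline{\mathcal V}$, and you should say so explicitly. For $\underline{\mathcal V}>0$ the number $\underline{\mathcal V}$ itself need not bound $\Pot$ from below: for example, $\mathcal V\equiv\underline{\mathcal V}$ and $\B=0$ give $\Pot\equiv\tfrac12\underline{\mathcal V}$.
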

     Proposition \ref{prop:PsiandPotprop} follows by direct computation, so we omit its proof. The following theorem summarizes the conditions under which the abstract results apply to the Cucker--Smale model.
    \begin{Thm}[Application to the Cucker--Smale model] \label{Thm:appMFCS}
        Consider the Cucker--Smale model introduced in Examples \ref{ex:CSmf} and \ref{ex:CsextAg}. Let $\Xeq \in L^2(\msolinit; \R^d)$, $\psi \in C(\R_+; \R_+)$ be bounded and let $\mathcal{V} \in C^1(\R^d; \R)$ satisfy \eqref{eq:lingrwthV}, \eqref{eq:calVgrwth} with $\nabla \mathcal{V}$ antisymmetric. 
        \thmpart Let $\wps$ be either $L^\infty(\msolinit; \R^d)$ or $C(\supp(\mu_0);\R^d) $. Assume that $\Xeq, \charX_0, \charV_0 \in \wps$ and that $\psi$ and $\nabla \mathcal{V}$ are locally Lipschitz. In the case of Example \ref{ex:CSmf}, we additionally assume that $\wps$ is invariant under $\cB \cB^*$.
        Then for every $\rho, \Delta t, c \geq 0$, there exists a unique solution $\charX, \charV \in C^1([0,\infty); \wps)$ to each of the continuous time systems \eqref{eq:semiexpleulerconttime} and \eqref{eq:semiexpleulerconttimestatad}.
            \thmpart 
            Suppose that $\psi(x) \geq \underline{\psi} > 0, \, x \in \R_+$ and let $\nabla \mathcal{V}$ be either Lipschitz or bounded and uniformly continuous. In the case of Example \ref{ex:CSmf}, assume that $\cnst \subseteq \range(\cB)$ and that $\range (\cB)$ is closed. In the case of Example \ref{ex:CsextAg}, let $\B$ be surjective.
            Then the conclusions
            of Theorem \ref{it:semexctcnst} hold. Moreover, if $\nabla \mathcal{V}$ is Lipschitz, then the conclusions of Corollary \ref{it:pslowb} hold.
            \thmpart\label{thm:applicationCSsurjective} In the case of Example \ref{ex:CSmf}, suppose that $\cB$ is surjective and that $\nabla \mathcal{V}$ is Lipschitz. Then the conclusions of Theorem \ref{it:semexctsurj} and Corollary \ref{it:Bsurj} hold. 
    \end{Thm}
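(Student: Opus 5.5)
The proof is a verification that the hypotheses of the abstract results hold, using Proposition \ref{prop:PsiandPotprop}, and we go through the three parts in order.

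\emph{Part (i).} We apply Theorem \ref{Thm:wpcl} with $\wps = L^\infty(\msolinit;\R^d)$ or $C(\supp(\msolinit);\R^d)$. Both embed continuously into $L^2(\msolinit;\R^d)$ because $\msolinit$ is a probability measure.
\begin{itemize}
\item Invariance of $\wps$ under $\cB\cB^*$ is assumed in Example \ref{ex:CSmf}. In Example \ref{ex:CsextAg} it is Proposition \ref{prop:PsiandPotprop}\ref{it:Bextag}; explicitly, $\cB^*\charV = B\overline{\charV}$, so $\cB\cB^*\charV = (B^2\overline{\charV})\mathds{1}$ is constant.
\item Assumption (ii) of Theorem \ref{Thm:wpcl} follows from Proposition \ref{prop:PsiandPotprop}\ref{it:psirestr}, using that $\psi$ is locally Lipschitz.
\item Assumption (iii) follows from Proposition \ref{prop:PsiandPotprop}\ref{it:VlocLip}, using that $\nabla\mathcal{V}$ is locally Lipschitz.
\item Assumption (i) concerns $g(\charX) = T\nabla\hat{\Pot}(\charX)$. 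By the computation in the example after Theorem \ref{Thm:wpcl}, this equals $c_3\cB\cB^*(\charX-\Xeq)$, plus the constant $-T\nabla\Pot(\Xeq)$ in the case \eqref{eq:semiexstatad}. Since $\Xeq\in\wps$, the invariance together with Proposition \ref{prop:PsiandPotprop}\ref{it:VlocLip} gives $\nabla\Pot(\Xeq)\in\wps$ and hence $T\nabla\Pot(\Xeq)\in\wps$. So $g$ is affine and bounded on $\wps$.
\end{itemize}
Theorem \ref{Thm:wpcl} is stated for \eqref{eq:nonPHclosedloop}. To cover all $\rho,\Delta t,c\ge 0$, including those with $|c_2|\norm{\cB\cB^*}\ge 1$, we apply its Picard--Lindelöf/Gronwall argument directly to \eqref{eq:semiexpleulerconttime} and \eqref{eq:semiexpleulerconttimestatad}. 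Their right-hand sides are built from $\cB\cB^*|_{\wps}$, $T|_{\wps}$, $\Psiop$ and $\nabla\Pot$, so they are locally Lipschitz with linear growth on $\wps$. The bound $\norm{\Psiop(\charX)\charV}_{\wps}\le \sup_{\charX}\norm{\Psiop(\charX)}_{\mathscr{L}(\wps)}\norm{\charV}_{\wps}$ holds because $\Psiop$ is bounded in operator norm. This gives global unique solutions in $C^1([0,\infty);\wps)$.

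\emph{Part (ii).} We check the hypotheses of Theorem \ref{thm:semexeulerconttimeconv}\ref{it:semexctcnst} and Corollary \ref{thm:semexeulerdiscrtimeconv}\ref{it:pslowb}, and first Assumption \ref{ass:control}.
\begin{itemize}
\item Lower boundedness of $\Pot$: the interaction part of $\Pot$ is bounded below by $\underline{\mathcal V}/2$ and the $B$-part is nonnegative. Condition \eqref{eq:potgrwthbd} is Proposition \ref{prop:PsiandPotprop}\ref{it:Vgrwth}.
\item $\Psiop$ is bounded, self-adjoint and positive semidefinite by Proposition \ref{prop:PsiandPotprop}\ref{it:psisa}.
\item Closed range: it is assumed in Example \ref{ex:CSmf}. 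In Example \ref{ex:CsextAg}, $\range(\cB)=\cnst$ when $B$ is surjective, which is finite dimensional and hence closed.
\item The coercivity \eqref{eq:Psioplowbd} holds with $\ks=\cnst$ by Proposition \ref{prop:PsiandPotprop}\ref{it:psisa}, and $\ks\subseteq\range(\cB)$ by assumption, or by surjectivity of $B$ in Example \ref{ex:CsextAg}.
\item Uniform continuity of $\nabla\Pot$ comes from Proposition \ref{prop:PsiandPotprop}\ref{it:VLip} in the Lipschitz case and from \ref{it:Vunifcont} in the bounded, uniformly continuous case. In the latter case the linear part $BX$ is Lipschitz, so $\nabla\Pot$ is still uniformly continuous.
\end{itemize}
This yields the conclusions of Theorem \ref{thm:semexeulerconttimeconv}\ref{it:semexctcnst}. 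When $\nabla\mathcal V$ is Lipschitz, $\nabla\Pot$ is Lipschitz, which is the extra requirement of Theorem \ref{thm:semexdiscrtimeHepsbal}, and therefore of Corollary \ref{thm:semexeulerdiscrtimeconv}\ref{it:pslowb}.

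\emph{Part (iii).} Assumption \ref{ass:control} is verified as in part (ii); a surjective $\cB$ has closed range. $\nabla\Pot$ is Lipschitz with constant $2L$ by Proposition \ref{prop:PsiandPotprop}\ref{it:VLip}, so Theorem \ref{thm:semexeulerconttimeconv}\ref{it:semexctsurj} and Corollary \ref{thm:semexeulerdiscrtimeconv}\ref{it:Bsurj} apply directly.

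The main obstacle is part (i), and specifically the parameter ranges with $|c_2|\norm{\cB\cB^*}\ge1$. There the Theorem \ref{Thm:wpcl} formulation via $\hat{\Pot}$ and $T^{-1/2}$ is unavailable, so we must work with the explicit right-hand sides of \eqref{eq:semiexpleulerconttime} and \eqref{eq:semiexpleulerconttimestatad} as described.
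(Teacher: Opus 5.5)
Your proposal is correct and takes the same route the paper does. The paper gives no separate proof; it treats the theorem as the combination of Proposition~\ref{prop:PsiandPotprop} with Theorem~\ref{Thm:wpcl}, Theorem~\ref{thm:semexeulerconttimeconv} and Corollary~\ref{thm:semexeulerdiscrtimeconv}, and that hypothesis check is exactly what you carry out. Your extra remark is a worthwhile refinement of the paper's citation of Theorem~\ref{Thm:wpcl}: when $c_2\norm{\cB\cB^*}\ge 1$, no $\hat{\Pot}$ with $T\nabla\hat{\Pot} = c_3\cB\cB^*(\cdot-\Xeq)$ need exist, so the Picard--Lindel\"of/Gronwall argument has to be run directly on the explicit right-hand sides, as you do.
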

    \begin{Rem}
        We note that there is no analogue of Theorem \ref{thm:applicationCSsurjective} for Example \ref{ex:CsextAg}. Indeed, if $\cB$ is given by \eqref{eq:cbextAg}, then $\cB$ is surjective only if $\B$ is surjective and $\msolinit$ is a Dirac measure.
    \end{Rem}

    \appendix
\section{Proof of Theorem \ref{thm:semexdiscrtimeHepsbal}}\label{sec:appendix}

\begin{proof}[Proof of Theorem \ref{thm:semexdiscrtimeHepsbal}]
     We proceed by following the steps outlined in Section \ref{sec:WpandAs}. \par \noindent
     \textbf{Computation of the energy balance:} Fix $\varepsilon > 0$. Moreover, we will always assume that $0 \leq \Delta t, \rho, c$ and that $c_2$ satisfies $c_2 \norm{\cB \cB^*} < 1$. The latter assumption will be justified later. Consequently, we have $\norm{T} \leq 2$, $\norm{T^{1/{2} }} \leq \sqrt{2}$ and $T$ has a bounded inverse.
            Moreover, we can always assume that \eqref{eq:Psioplowbd} holds. Indeed, in the second part of the proof where we assume $\cB$ to be surjective, we can take $\ks = \HS$ and arbitrary $\underline{\psi} > 0$.
            For $l \in \N_0$, we introduce the shorthand notation 
            \begin{align}
                D^l \coloneq \Psi(\charX^l) + c_1 T^{- \frac{1}{2}} \cB \cB^* T^{- \frac{1}{2}} \text{ and } F^l \coloneq - (\nabla \Pot(\charX^l) + \nabla \hat{\Pot}(\charX^l) ) \, . \label{eq:defDlFl}
            \end{align}
            Then the velocity update \eqref{eq:semexeuleritsVup} reads 
                $\charV^{l+1} = \charV^l - \Delta t T (D^l \charV^l - F^l)$ .
            Since $T^{-1}$ is self-adjoint, we obtain 
            \begin{align}
                \frac{1}{2} \langle \charV^{l+1} , T^{-1} \charV^{l+1} \rangle &= \frac{1}{2} \langle \charV^{l} , T^{-1} \charV^{l} \rangle 
                - \Delta t \langle \charV^{l}, D^l \charV^l \rangle 
                + \Delta t \langle \charV^l, F^l \rangle 
                + \frac{1}{2} (\Delta t)^2 \norm{T^{\frac{1}{2}} (D^l \charV^l - F^l)  }^2 \, , \label{eq:kinupd} \\ 
                \Pot(\charX^{l+1}) + \hat{\Pot}(\charX^{l+1}) &= \Pot(\charX^{l}) + \hat{\Pot}(\charX^{l}) - \Delta t \langle F^l, \charV^{l} \rangle + (\Delta t)^2 \langle F^l, T(D^l \charV^l - F^l) \rangle \nonumber  \\
                &\quad + \big( \Pot(\charX^{l+1}) + \hat{\Pot}(\charX^{l+1}) - \Pot(\charX^{l}) - \hat{\Pot}(\charX^{l}) + \Delta t \langle F^l, \charV^{l+1} \rangle \big) \, , \label{eq:potup} \\
                - \varepsilon \langle \charV^{l+1}, T F^{l+1} \rangle &= - \varepsilon \langle \charV^l - \Delta t T (D^l \charV^l - F^l) , T (F^l + (F^{l+1} - F^l) ) \rangle \nonumber  \\ 
                &= - \varepsilon \langle \charV^l, T F^l \rangle - \varepsilon \langle \charV^l, T (F^{l+1} - F^l) \rangle 
                + \varepsilon \Delta t \langle TD^l \charV^l , T F^l \rangle \nonumber \\
                &\quad - \varepsilon \Delta t \norm{T F^l}^2 
                + \varepsilon \Delta t \langle T(D^l \charV^l - F^l) , T (F^{l+1} - F^l) \rangle \, . \label{eq:VFupd}
            \end{align}
            Taking the sum of \eqref{eq:kinupd}, \eqref{eq:potup} and \eqref{eq:VFupd} yields the energy balance of $H_{\text{cl}, \varepsilon}$
            \begin{align}
                H_{\text{cl}, \varepsilon}(\charX^{l+1}, \charV^{l+1})   
                &= H_{\text{cl}, \varepsilon}(\charX^{l}, \charV^{l}) - \Delta t \langle \charV^l , D^l \charV^l \rangle 
                - \varepsilon \Delta t \norm{TF^l}^2 
                + \varepsilon \Delta t \langle T D^l V^l , T F^l \rangle \nonumber \\
                &\quad - \varepsilon \langle \charV^l, T(F^{l+1} - F^l) \rangle 
                + \frac{1}{2} (\Delta t)^2 \norm{T^{\frac{1}{2}} (D^l \charV^l - F^l)  }^2 
                + (\Delta t)^2 \langle F^l, T(D^l \charV^l - F^l) \rangle \nonumber \\
                &\quad + \big( \Pot(\charX^{l+1}) + \hat{\Pot}(\charX^{l+1}) - \Pot(\charX^{l}) - \hat{\Pot}(\charX^{l}) + \Delta t \langle F^l, \charV^{l+1} \rangle \big) \nonumber \\
                &\quad + \varepsilon \Delta t \langle T(D^l \charV^l - F^l) , T (F^{l+1} - F^l) \rangle \nonumber  \\
                &\eqqcolon H_{\text{cl}, \varepsilon}(\charX^{l}, \charV^{l}) + \RN{1} + \RN{2} + \ldots + \RN{8} \, . \label{eq:semexHepsenbalance}
            \end{align}
            We will now derive upper bounds for each of the above terms $\RN{1}, \ldots, \RN{8}$.\\
            \textbf{Derivation of upper bounds:}  Recall that according to Corollary \ref{Cor:Blowbound}, there exists a constant $\kappa > 0$ only depending on $\cB$ such that 
            \begin{align}
                \norm{\cB^* T^{-\frac{1}{2}} \charV^l } \geq \frac{\kappa}{\norm{T^{\frac{1}{2}}}} \norm{P_{\ks} \charV^l } \geq \frac{\kappa}{\sqrt{2}} \norm{P_{\ks} \charV^l } \, . \label{eq:hBmeanlowbound}
            \end{align}
            By Assumption \ref{ass:control}, we have $\norm{\Psi(Y)}_{\mathscr{L}(\HS) } \leq \norm{\Psiop}_\infty$ for $Y \in \HS$. Thus, for $l \in \N_0$,
            \begin{align}
                \norm{T^{\frac{1}{2}} D^l \charV^l } &\leq \sqrt{2} \norm{\Psiop}_\infty \big( \norm{ (I - P_{\ks} ) \charV^l } + \norm{P_{\ks} \charV^l } \big) + c_1 \norm{\cB} \norm{\cB^*T^{-\frac{1}{2}} \charV^l}  \nonumber \\ 
                &\leq \sqrt{2} \norm{\Psiop}_\infty \norm{ (I - P_{\ks} ) \charV^l } + \Big(c_1 \norm{\cB} + \frac{2}{\kappa} \norm{\Psiop}_\infty \Big) \norm{\cB^*T^{-\frac{1}{2}} \charV^l} \, ,
                \\
                  \norm{T D^l \charV^l } &\leq 2 \norm{\Psiop}_\infty \norm{(I - P_{\ks} )\charV^l } + \sqrt{2} \Big(c_1 \norm{\cB} + \frac{2}{\kappa} \norm{\Psiop}_\infty \Big) \norm{\cB^*T^{-\frac{1}{2}} \charV^l} \, . \label{eq:TDVbound}
            \end{align}
            The first line follows from the definition of $D^l$ in \eqref{eq:defDlFl}, while in the second line, we used \eqref{eq:hBmeanlowbound}. Moreover, we note that $\nabla \Pot + \nabla \hat{\Pot}$ is Lipschitz with a Lipschitz constant
            \begin{align}
                L = \begin{cases}
                     L_{\nabla \Pot} + c_3 \norm{T^{-1}} \norm{\cB \cB^*}, \, &\hat{\Pot} \text{ is given by } \eqref{eq:semiexnonstatad}, \\ 
                    L_{\nabla \Pot}+ c_3 \norm{T^{-1}} \norm{\cB \cB^*} +  \norm{ \nabla \Pot (\Xeq) }, \, &\hat{\Pot} \text{ is given by } \eqref{eq:semiexstatad} \, ,
                \end{cases} \label{eq:Lipc}
            \end{align}
            where $L_{\nabla \Pot}$ is a Lipschitz constant of $\nabla \Pot$.
            Consequently, we deduce the following norm bounds for the updated velocity $\charV^{l+1}$ and the force difference $F^{l+1} - F^l$:
            \begin{align}
                \big\lVert \charV^{l+1} \big\rVert 
                &= \norm{ \charV^l -\Delta t T D^l\charV^l + \Delta t T F^l } \nonumber \\
                &\leq (1 + 2\Delta t \norm{\Psiop}_\infty ) \norm{(I-P_{\ks} )\charV^l} + \norm{ P_{\ks} \charV^l }
                + \Delta t \norm{T F^l} \nonumber  \\
                &\quad + \sqrt{2} \Delta t  \Big(c_1 \norm{\cB} + \frac{2}{\kappa} \norm{\Psiop}_\infty \Big) \norm{\cB^*T^{-\frac{1}{2}} \charV^l}  
                  \nonumber \\
                &\leq (1 + 2\Delta t \norm{\Psiop}_\infty ) \norm{(I-P_{\ks} )\charV^l} \nonumber \\ 
                &\quad 
                + \sqrt{2}\Big(
                \frac{1}{\kappa} +  \Delta t \Big(c_1 \norm{\cB} + \frac{2}{\kappa} \norm{\Psiop}_\infty \Big) \Big) \norm{\cB^*T^{-\frac{1}{2}} \charV^l} 
                + \Delta t \norm{TF^l} \nonumber \\
                &\eqqcolon K_{\Psiop} \norm{(I-P_{\ks} )\charV^l} + K_{\cB} \norm{\cB^*T^{-\frac{1}{2}} \charV^l} 
                +  \Delta t \norm{TF^l}
                 \, , \label{eq:semexVlpbound} \\
                \big\lVert F^{l+1} - F^l \big\rVert &= \norm{\nabla \Pot(\charX^l) + \nabla \hat{\Pot}(\charX^l) -  ( \nabla \Pot(\charX^{l+1}) + \nabla \hat{\Pot}(\charX^{l+1} ) ) } 
                \leq  \Delta t L \norm{\charV^{l+1}} \nonumber \\
                &\leq \Delta t L \Big( K_{\Psiop} \norm{(I-P_{\ks} )\charV^l} + K_{\cB} \norm{\cB^*T^{-\frac{1}{2}} \charV^l} + \Delta t \norm{TF^l} \Big) \, . \label{eq:semexFldiff}
            \end{align}
            The second line follows from \eqref{eq:TDVbound}, while in \eqref{eq:semexFldiff}, we have used that $\nabla \Pot + \nabla \hat{\Pot} $ is Lipschitz with Lipschitz constant $L$. The last step follows from the velocity bound \eqref{eq:semexVlpbound}.
            We are now ready to find bounds for the eight terms in the energy balance \eqref{eq:semexHepsenbalance}. 
            The terms of order $\Delta t$ in the energy balance satisfy the upper bounds
            \begin{align}
                \RN{1} &= - \Delta t \langle \charV^l , D^l \charV^l \rangle \leq - \Delta t \big(\underline{\psi} \norm{(I-P_{\ks} )\charV^l}^2 + c_1 \norm{\cB^*T^{-\frac{1}{2}} \charV^l}^2 \big) \, ,\label{eq:termest1} \\
                \RN{3} &= \varepsilon \Delta t \langle T D^l \charV^l , T F^l \rangle 
                \leq  \varepsilon \Delta t \norm{T D^l\charV^l} \norm{T F^l} \nonumber \\
                &\leq \varepsilon \Delta t \Big( 2 \norm{\Psiop}_\infty \norm{(I-P_{\ks} )\charV^l} + \sqrt{2} \Big(c_1 \norm{\cB} + \frac{2}{\kappa} \norm{\Psiop}_\infty \Big) \norm{\cB^*T^{-\frac{1}{2}} \charV^l} \Big) \norm{TF^l} \nonumber \\
                &\leq \frac{\varepsilon \Delta t \tau }{2} \norm{T F^l}^2 + \frac{2\varepsilon \Delta t}{\tau}  \Big( 2 \norm{\Psiop}_\infty^2 \norm{(I-P_{\ks} )\charV^l}^2 
                +  2 \Big( c_1^2 \norm{\cB}^2 + \frac{4}{\kappa^2} \norm{\Psiop}_\infty^2 \Big) \norm{\cB^*T^{-\frac{1}{2}} \charV^l}^2 \Big)\, ,   \label{eq:termest3} 
            \end{align}
            for all  $\tau > 0$. The estimate in the first line is due to \eqref{eq:Psioplowbd}, while to obtain the estimate \eqref{eq:termest3}, we have used \eqref{eq:TDVbound} and, in the last step, Young's inequality in combination with the Cauchy Schwarz inequality $(a+b)^2 \leq 2 (a^2 + b^2)$ for $a,b \in \R$.
            Subsequently, we use \eqref{eq:hBmeanlowbound} and \eqref{eq:semexFldiff}  to obtain    
            \begin{align}
                \RN{4} &= - \varepsilon \langle \charV^l, T(F^{l+1} - F^l) \rangle \leq 2 \varepsilon \norm{\charV^l} \norm{F^{l+1} - F^l} 
                \leq 2 \varepsilon  \Big(\norm{(I-P_{\ks} )\charV^l} + \norm{P_{\ks} \charV^l} \Big)
                \norm{F^{l+1} - F^l} \nonumber \\
                &\leq 2 \varepsilon \Delta t L \Big(\norm{(I-P_{\ks} )\charV^l} + \frac{\sqrt{2}}{\kappa} \norm{\cB^*T^{-\frac{1}{2}} \charV^l} \Big)  \Big( K_{\Psiop} \norm{(I-P_{\ks} )\charV^l} + K_{\cB} \norm{\cB^*T^{-\frac{1}{2}} \charV^l} + \Delta t \norm{TF^l} \Big)
                \nonumber \\
                &=  2\varepsilon \Delta t L \Big(  K_{\Psiop} \norm{(I-P_{\ks} )\charV^l}^2 
                +  \Big( K_{\cB} \norm{\cB^*T^{-\frac{1}{2}} \charV^l} + \Delta t \norm{TF^l} \Big) \norm{(I-P_{\ks} )\charV^l} \nonumber \\
                &\quad 
                + \frac{\sqrt{2}}{\kappa}  K_{\cB} \norm{\cB^*T^{-\frac{1}{2}} \charV^l}^2 
                + \frac{\sqrt{2}}{\kappa}  \Big(
                K_{\Psiop} \norm{(I-P_{\ks} )\charV^l} 
                + \Delta t \norm{TF^l}
                \Big) \norm{\cB^*T^{-\frac{1}{2}} \charV^l} \Big) \nonumber \\
                &\leq 2\varepsilon \Delta t L \Big( 
                     \Big[ K_{\Psiop} + \frac{K_{\cB}}{2}
                + \frac{\sqrt{2}}{2 \kappa }  K_{\Psiop}
                +\frac{\Delta t}{2}
                \Big] \norm{(I-P_{\ks} )\charV^l}^2 \nonumber \\
                     &\quad+ 
                     \Big[ \frac{\sqrt{2}}{\kappa} K_{\cB} 
                + \frac{K_{\cB}}{2} 
                + \frac{\sqrt{2}}{2 \kappa }  K_{\Psiop}
                + \frac{\sqrt{2}\Delta t}{2\kappa } 
                \Big] \norm{\cB^*T^{-\frac{1}{2}} \charV^l}^2 
                + \frac{\sqrt{2}}{\kappa} \Delta t \norm{TF^l}^2 \Big)
                \, . \nonumber
            \end{align}
            In the last step, we have applied Young's inequality $ab \leq (a^2 + b^2) /{2}$ for $a,b \geq 0$ several times.
            For the second order $\Delta t$ terms in \eqref{eq:semexHepsenbalance}, we obtain
            \begin{align*}
                \RN{5} &=  \frac{1}{2} (\Delta t)^2 \norm{T^{\frac{1}{2}} (D^l \charV^l - F^l)  }^2  
                 \leq \frac{1}{2} (\Delta t)^2 \Big( \norm{T^{\frac{1}{2}}D^l \charV^l} + \Big(\langle F^l, T F^l \rangle\Big)^{\frac{1}{2}} \Big)^2 \\
                 &\leq  (\Delta t)^2 \Big( \Big( \sqrt{2} \norm{\Psiop}_\infty \norm{(I-P_{\ks} )\charV^l} + \Big(c_1 \norm{\cB} + \frac{2}{\kappa} \norm{\Psiop}_\infty \Big) \norm{\cB^*T^{-\frac{1}{2}} \charV^l}\Big)^2 
                 + \norm{T^{-1}} \norm{T F^l}^2\Big)  \\
                &\leq (\Delta t)^2 \Big( 
                    4  \norm{\Psiop}_\infty^2 \norm{(I-P_{\ks} )\charV^l}^2
                    + 4 \Big( c_1^2 \norm{\cB}^2 + \frac{4}{ \kappa^2 } \norm{\Psiop}_\infty^2 \Big) \norm{\cB^*T^{-\frac{1}{2}} \charV^l}^2 
                    + \norm{T^{-1}} \norm{TF^l}^2  
                \Big) \, , \\
                 \RN{6} &= (\Delta t)^2 \langle F^l, T(D^l \charV^l - F^l) \rangle 
                 \leq (\Delta t)^2 \langle T F^l, D^l \charV^l \rangle 
                 \leq (\Delta t)^2   \norm{D^l \charV^l} \norm{TF^l} \nonumber \\
                 &\leq  (\Delta t)^2  \Big( \norm{\Psiop}_\infty \norm{(I-P_{\ks} )\charV^l} + \Big( c_1 \norm{T^{-\frac{1}{2}}} \norm{\cB} + \frac{\sqrt{2}}{ \kappa }\norm{\Psiop}_\infty  \Big) \norm{\cB^*T^{-\frac{1}{2}} \charV^l} \Big) \norm{T F^l} \\
                 &\leq  \frac{(\Delta t)^2}{\sigma } 
                 \Big( 
                 \norm{\Psiop}_\infty^2 \hspace{-2pt} \norm{(I-P_{\ks} )\charV^l}^2 
                 + 2 \Big( c_1^2 \norm{T^{-1}} \norm{\cB}^2  + \frac{2}{ \kappa^{2}} \norm{\Psiop}_\infty^2 \Big) \hspace{-2pt}  \norm{\cB^*T^{-\frac{1}{2}} \charV^l}^2 
                 \Big) 
                 +\frac{(\Delta t)^2 \sigma }{2} \norm{T F^l}^2
                 \, ,
                 \end{align*}
                 for all $\sigma > 0$, where we have applied Young's inequality. The bound for $\norm{D^l \charV^l}$ that we used above can be derived similarly as in \eqref{eq:TDVbound} using the definition of $D^l$ in \eqref{eq:defDlFl}. 
                 For the second order change of the potential energy, we have 
                \begin{align*}
                \abs{\RN{7}} &= \abs{\Pot(\charX^{l+1}) + \hat{\Pot}(\charX^{l+1}) - \Pot(\charX^{l}) - \hat{\Pot}(\charX^{l}) + \Delta t \langle F^l, \charV^{l+1} \rangle } \\
                &= \Delta t \int_0^1 \abs{ \langle  (\nabla \Pot + \nabla \hat{\Pot})(\charX^l + s \Delta t \charV^{l+1}) - (\nabla \Pot + \nabla \hat{\Pot} )(\charX^l) , \charV^{l+1} \rangle } \, \mathrm{d}s \, , \\ 
                &
                \leq \frac{3}{2} (\Delta t)^2 L \big( K_{\Psiop}^2 \norm{(I-P_{\ks} )\charV^l}^2 + K_{\cB}^2 \norm{\cB^*T^{-\frac{1}{2}} \charV^l} + (\Delta t)^2 \norm{TF^l}^2    \big) \, .
            \end{align*}
            In the last step, we have used the velocity bound \eqref{eq:semexVlpbound} for $V^{l+1}$ and the Cauchy Schwarz inequality $(a+b + c )^2 \leq 3 (a^2 +  b^2 + c^2)$ for all $a,b,c \in \R$.
            Lastly, we have 
            \begin{align*}
                \RN{8} &= \varepsilon \Delta t \langle T(D^l \charV^l - F^l) , T (F^{l+1} - F^l) \rangle 
                \leq 2\varepsilon \Delta t \norm{T (D^l \charV^l - F^l)} \norm{ F^{l+1} - F^l } \\
                &\leq  2\varepsilon L (\Delta t)^2 \big( \norm{T D^l\charV^l} + \norm{T F^l} \big) 
                \Big( K_{\Psiop} \norm{(I-P_{\ks} )\charV^l} + K_{\cB} \norm{\cB^*T^{-\frac{1}{2}} \charV^l} + \Delta t \norm{TF^l} \Big) \nonumber \\
                &\leq 2\varepsilon L (\Delta t)^2 \Big( 
                2 \norm{\Psiop}_\infty \norm{(I-P_{\ks} )\charV^l} + \sqrt{2} \Big(c_1 \norm{\cB} + \frac{2}{\kappa } \norm{\Psiop}_\infty \Big) \norm{\cB^*T^{-\frac{1}{2}} \charV^l} 
                 + \norm{TF^l} \Big)  \nonumber \\
                &\quad \Big( K_{\Psiop} \norm{(I-P_{\ks} )\charV^l} + K_{\cB} \norm{\cB^*T^{-\frac{1}{2}} \charV^l} + \Delta t \norm{TF^l} \Big)
                \nonumber \\
                &= 2\varepsilon L (\Delta t)^2 \Big[ 
                2 \norm{\Psiop}_\infty K_{\Psiop} \norm{(I-P_{\ks} )\charV^l}^2 
                + \sqrt{2} \Big(c_1 \norm{\cB} + \frac{2}{\kappa } \norm{\Psiop}_\infty \Big) K_{\cB} \norm{\cB^*T^{-\frac{1}{2}} \charV^l }^2 \nonumber \\
                &\quad 
                + \Delta t \norm{TF^l}^2 
                 + 2 \norm{\Psiop}_\infty \norm{(I-P_{\ks} )\charV^l} \Big( 
                K_{\cB} \norm{\cB^*T^{-\frac{1}{2}} \charV^l} + \Delta t \norm{TF^l}
                \Big) \nonumber \\
                &\quad + \sqrt{2} \Big(c_1 \norm{\cB} + \frac{2}{\kappa } \norm{\Psiop}_\infty \Big) \norm{\cB^*T^{-\frac{1}{2}} \charV^l} \Big(
                K_{\Psiop} \norm{(I-P_{\ks} )\charV^l} 
                + \Delta t \norm{TF^l} 
                \Big) \nonumber \\
                &\quad +  \norm{TF^l} \Big( 
                K_{\Psiop} \norm{(I-P_{\ks} )\charV^l}
                + K_{\cB} \norm{\cB^*T^{-\frac{1}{2}} \charV^l}
                \Big)
                \Big] \, .
            \end{align*}
            Next, we apply Young's inequality to each of the last three lines above and obtain 
            \begin{align*}
                 \big\lVert (I&-P_{\ks} )\charV^l \big\rVert \Big( 
                K_{\cB} \norm{\cB^*T^{-\frac{1}{2}} \charV^l} + \Delta t \norm{TF^l}
                \Big) \\
                &\leq  \frac{K_{\cB}}{2} \Big( \norm{(I-P_{\ks} )\charV^l}^2 +  \norm{\cB^*T^{-\frac{1}{2}} \charV^l}^2 \Big)  
                +   \frac{\Delta t}{2}  \Big( \norm{ (I- P_{\ks} )\charV^l }^2 +  \norm{TF^l}^2 \Big) \\
                &= \frac{1}{2} \big(K_{\cB} 
                + \Delta t
                \big) \norm{(I-P_{\ks} )\charV^l}^2
                +  \frac{ K_{\cB}}{2} \norm{\cB^*T^{-\frac{1}{2}} \charV^l}^2 
                 +  \frac{\Delta t}{2}  \norm{TF^l}^2 \, ,
                \\
                 \Big\lVert \cB^* &T^{-\frac{1}{2}} \charV^l \Big\rVert \Big(
                K_{\Psiop} \norm{(I-P_{\ks} )\charV^l} 
                + \Delta t \norm{TF^l} 
                \Big) \\
                &\leq \frac{1}{2} \Big(  K_{\Psiop} \Big( \norm{\cB^*T^{-\frac{1}{2}}\charV^l }^2 +  \norm{(I-P_{\ks} )\charV^l}^2  \Big) 
                +  \Delta t \Big( \norm{\cB^*T^{-\frac{1}{2}}\charV^l}^2 + \norm{TF^l}^2  \Big) \Big) \\
                &= \frac{1}{2} \Big(  K_{\Psiop} \norm{(I-P_{\ks} )\charV^l}^2 
                +  \Big(
                K_{\Psiop} + \Delta t
                \Big) \norm{\cB^*T^{-\frac{1}{2}} \charV^l }^2  
                + \Delta t \norm{TF^l}^2 \Big) \, , \\
                \big\lVert T F^l\big\rVert &\Big( 
                K_{\Psiop} \norm{(I-P_{\ks} )\charV^l}
                + K_{\cB} \norm{\cB^*T^{-\frac{1}{2}} \charV^l}
                \Big) \\
                &\leq \frac{1}{2} K_{\Psiop} \Big( \norm{(I-P_{\ks} )\charV^l}^2 + \norm{TF^l}^2 \Big) 
                + \frac{K_{\cB}}{2} \Big(\norm{TF^l}^2 +  \norm{\cB^*T^{-\frac{1}{2}} \charV^l}^2 \Big) \\ 
                &=  \frac{1}{2} K_{\Psiop} \norm{(I-P_{\ks} )\charV^l}^2
                + \frac{K_{\cB}}{2} \norm{\cB^*T^{-\frac{1}{2}} \charV^l}^2 + \frac{1}{2} \Big( K_{\Psiop} + K_{\cB}  \Big) \norm{TF^l}^2 \, .
            \end{align*}
            Therefore, we obtain 
            \begin{align*}
                \RN{8}
                &\leq 
                2 \varepsilon L (\Delta t)^2 \Big( 
                    \Big[ \Big(2 \norm{\Psiop}_\infty + \frac{\sqrt{2}}{2} \Big(c_1 \norm{\cB} + \frac{2}{\kappa} \norm{\Psiop}_\infty \Big) + \frac{1}{2} \Big) K_{\Psiop} 
                    +  \norm{\Psiop}_\infty (K_{\cB} + \Delta t)
                    \Big] \norm{(I-P_{\ks} )\charV^l}^2 \\ 
                    &\quad 
                    +    \Big[\frac{\sqrt{2}}{2} \Big(c_1 \norm{\cB} + \frac{2}{\kappa} \norm{\Psiop}_\infty \Big) (2 K_{\cB} + K_{\Psiop} + \Delta t)
                +  \frac{K_{\cB}}{2} (2 \norm{\Psiop}_\infty
                 + 1)
                \Big]  \norm{\cB^*T^{-\frac{1}{2}} \charV^l}^2 \\ 
                    &\quad + 
                    \Big[ \Big( 1 + \norm{\Psiop}_\infty + \frac{\sqrt{2}}{2} \Big(c_1 \norm{\cB} + \frac{2}{\kappa} \norm{\Psiop}_\infty \Big) \Big) \Delta t
                 +\frac{1}{2} (K_{\Psiop} + K_{\cB}) 
                 \Big] \norm{T F^l}^2 \Big) \, .
            \end{align*}
            In order to shorten the notation in the following, we introduce the terms 
            \begin{align*}
                \mu_1 &\coloneq  \Delta t \Big( \frac{4 \varepsilon}{\tau \Delta t}  \norm{\Psiop}_\infty^2
                + 4  \norm{\Psiop}_\infty^2
                + \frac{\norm{\Psiop}_\infty^2 }{\sigma} 
                + \frac{3}{2} L K_{\Psiop}^2
                \Big) \hspace{-2pt}  
                + \hspace{-1pt} 2\varepsilon L \Big[ K_{\Psiop} + \frac{K_{\cB}}{2}
                + \frac{\sqrt{2}}{2\kappa}  K_{\Psiop}
                +\frac{\Delta t}{2}
                \Big]  
                 \nonumber \\
                & 
                 + 2 \varepsilon L \Big[ \Big(2 \Delta t \norm{\Psiop}_\infty + \frac{\sqrt{2}}{2} (c_1 \Delta t \norm{\cB} + \frac{2 \Delta t}{\kappa} \norm{\Psiop}_\infty ) + \frac{\Delta t}{2}  \Big) K_{\Psiop} 
                    +  \Delta t \norm{\Psiop}_\infty (K_{\cB} + \Delta t)
                    \Big] \, , \\
                \mu_2 &\coloneq  1 
                    -  4 \norm{\cB}^2 \frac{c_1 \Delta t }{\tau} \frac{\varepsilon}{\Delta t} 
                    -  4 \norm{\cB}^2 c_1 \Delta t
                    -  2\frac{\norm{\cB}^2}{\sigma } \norm{T}^{-1} c_1 \Delta t
                    \, , \\
                \mu_3 &\coloneq 2 \varepsilon \Delta t L \Big[ \frac{\sqrt{2}}{\kappa}  K_{\cB} 
                + \frac{K_{\cB}}{2} 
                + \frac{\sqrt{2}}{2 \kappa } K_{\Psiop}
                + \frac{\sqrt{2} \Delta t}{2 \kappa } 
                \Big] 
                + \frac{4 (\Delta t)^2}{\kappa^{2}} \norm{\Psiop}_\infty^2 \Big( 4 \frac{\varepsilon}{\Delta t \tau} +  \frac{1}{\sigma} + 4  \Big)
                + \frac{3}{2} L K_{\cB}^2 (\Delta t)^2 \\
                &\quad +  2 \varepsilon L \Delta t \Big[\frac{\sqrt{2}}{2} \Big(c_1 \Delta t \norm{\cB} + \frac{2 \Delta t}{\kappa} \norm{\Psiop}_\infty \Big) (2 K_{\cB} + K_{\Psiop} + \Delta t)
                +   \frac{\Delta t K_{\cB}}{2} (2 \norm{\Psiop}_\infty
                 + 1)
                \Big] , \\
                \mu_4 &\coloneq  \frac{2\sqrt{2} \varepsilon L}{\kappa } 
                    + \frac{3}{2} L (\Delta t)^2  
                    + 2 \varepsilon L \Big[  \Delta t + \Delta t \norm{\Psiop}_\infty + \frac{\sqrt{2}}{2} (c_1 \Delta t \norm{\cB} + \frac{2 \Delta t}{\kappa} \norm{\Psiop}_\infty )  
                 +\frac{1}{2} (K_{\Psiop} + K_{\cB}) 
                 \Big] \, .
            \end{align*}
            In summary, by collecting all upper bounds for the terms $\RN{1}, \ldots , \RN{8}$, we obtain
            \begin{align}
                &H_{\text{cl}, \varepsilon}(\charX^{l+1}, \charV^{l+1}) \nonumber \\
                &= H_{\text{cl}, \varepsilon}(\charX^l, \charV^l)
                 - \Delta t 
                \Big\{ \underline{\psi}   - \Delta t \Big( \frac{4 \varepsilon}{\tau \Delta t}  \norm{\Psiop}_\infty^2
                + 4  \norm{\Psiop}_\infty^2
                + \frac{\norm{\Psiop}_\infty^2 }{\sigma} 
                + \frac{3}{2} L K_{\Psiop}^2
                \Big)  \nonumber \\
                &\hspace{3.55pt}
                 - 2 \varepsilon L \Big[ \Big(2 \Delta t \norm{\Psiop}_\infty + \frac{\sqrt{2}}{2} \Big( c_1 \Delta t \norm{\cB} + \frac{2}{\kappa} \Delta t \norm{\Psiop}_\infty \Big) + \frac{\Delta t}{2}  \Big) K_{\Psiop} 
                      +  \Delta t \norm{\Psiop}_\infty (K_{\cB} + \Delta t)
                    \Big]  \nonumber \\
                    &\hspace{3.55pt} - 2\varepsilon L \Big[ K_{\Psiop} + \frac{K_{\cB}}{2}
                + \frac{\sqrt{2}}{2 \kappa }  K_{\Psiop}
                +\frac{\Delta t}{2}
                \Big]  
                    \Big\} \norm{(I-P_{\ks} )\charV^l}^2 \nonumber \\
                &\hspace{3.55pt} -
                    \Big\{ \Big( 1 
                    -  4 \norm{\cB}^2 \frac{c_1 \Delta t }{\tau} \frac{\varepsilon}{\Delta t} 
                    -  4 \norm{\cB}^2 c_1 \Delta t
                    -  2\frac{\norm{\cB}^2}{\sigma } \norm{T}^{-1} c_1 \Delta t
                    \Big) c_1 \Delta t 
                    - \frac{3}{2} L K_{\cB}^2 (\Delta t)^2 \nonumber \\
                &\hspace{3.55pt} - 2 \varepsilon \Delta t L \Big[ \frac{\sqrt{2}}{\kappa} K_{\cB} 
                + \frac{K_{\cB}}{2} 
                + \frac{\sqrt{2}}{2 \kappa } K_{\Psiop}
                + \frac{\sqrt{2}\Delta t}{2\kappa } 
                \Big] 
                 - \frac{4 (\Delta t)^2}{\kappa^2} \norm{\Psiop}_\infty^2 \Big( 4 \frac{\varepsilon}{\Delta t \tau} +  \frac{1}{\sigma} + 4  \Big)
                 \nonumber \\
                &\hspace{3.55pt} -  2 \varepsilon L \Delta t \Big[\frac{\sqrt{2}}{2} \Big(c_1 \Delta t \norm{\cB} + \frac{2 \Delta t}{\kappa } \norm{\Psiop}_\infty \Big) (2 K_{\cB} + K_{\Psiop} + \Delta t) 
                +   \frac{\Delta t K_{\cB}}{2} (2 \norm{\Psiop}_\infty
                 + 1)
                \Big]
                \Big\} \norm{\cB^*T^{-\frac{1}{2}} \charV^l}^2 \nonumber \\
                &\hspace{3.55pt} - 
                (\Delta t)^2 \Big\{ 
                    \Big(1 - \frac{ \tau}{2} \Big) \frac{\varepsilon}{\Delta t}  
                    -  \norm{T^{-1}} 
                    - \frac{ \sigma }{2} 
                    - \frac{2\sqrt{2}}{\kappa} \varepsilon L 
                    - \frac{3}{2} L (\Delta t)^2  \nonumber \\ 
                    &\hspace{3.55pt} - 2 \varepsilon L \Big[  \Delta t + \Delta t \norm{\Psiop}_\infty + \frac{\sqrt{2}}{2} \Big(c_1 \Delta t \norm{\cB} + \frac{2 \Delta t}{\kappa} \norm{\Psiop}_\infty \Big)  
                 +\frac{1}{2} (K_{\Psiop} + K_{\cB}) 
                 \Big]
                \Big\} \norm{TF^l}^2 \nonumber \\
                &= H_{\text{cl}, \varepsilon} (\charX^l, \charV^l) 
                - \Delta t (\underline{\psi} - \mu_1 ) \norm{(I-P_{\ks} )\charV^l}^2
                - ( \mu_2 c_1 \Delta t  - \mu_3 ) \norm{\cB^*T^{-\frac{1}{2}} \charV^l}^2 \nonumber \\
                &\hspace{3.55pt}
                - (\Delta t)^2 \Big( 
                    \Big(1 - \frac{ \tau}{2} \Big) \frac{\varepsilon}{\Delta t}  
                    -  \norm{T^{-1}} 
                    - \frac{ \sigma }{2} - \mu_4 \Big) \norm{TF^l}^2 \, . \label{eq:semexHempsbound}
            \end{align}
            \textbf{Choice of the parameters:} 
            We proceed by finding suitable choices of the parameters $\varepsilon, \Delta t, \rho$ and $c$ to ensure that \eqref{eq:semexHepsbound} holds and $H_{\text{cl}, \varepsilon}$ is bounded from below. 
            First, we notice that if we choose $\alpha = 1/{66}$,  $\sigma = \tau = 1/{2}$ and $\varepsilon = 3 \Delta t$ and if $\rho$ satisfies 
            \begin{align}
                \rho &\leq \frac{\alpha}{ (\Delta t)^3 (1+c (\Delta t)^2 ) \norm{\cB \cB^*}} \, , \label{eq:rhoupbound} \\
               \text{then } c_1 \Delta t \norm{\cB \cB^*} &= c_2 \norm{\cB \cB^*} \leq \alpha 
                \text{ and thus } 
                \norm{T^{-1}} \leq \frac{1}{1- c_2 \norm{\cB \cB^*}} \leq \frac{66}{65} < \frac{5}{4}  \, . \nonumber
            \end{align}
           The norm bound on $T^{-1}$ follows from the Neumann series. Thus, we note that our standing assumption $c_2 \norm{\cB \cB^*} \leq \alpha <  1$ is satisfied. Since $\norm{\cB \cB^*} = \norm{\cB}^2$, we obtain 
           \begin{align}
               \mu_2 &= 1 
                    -  4 \norm{\cB}^2 \frac{c_1 \Delta t }{\tau} \frac{\varepsilon}{\Delta t} 
                    -  4 \norm{\cB}^2 c_1 \Delta t
                    -  2\frac{\norm{\cB}^2}{\sigma } \norm{T}^{-1} c_1 \Delta t 
                    \geq 1 - 33 \alpha = \frac{1}{2}  . \label{eq:mu2lowbound}
           \end{align}
           Similarly, we have
           \begin{align}
                    \Big(1 - \frac{ \tau}{2} \Big) \frac{\varepsilon}{\Delta t}  
                    -  \norm{T^{-1}} 
                    - \frac{ \sigma }{2} &\geq \frac{9}{4} - \frac{5}{4} - \frac{1}{4} = \frac{3}{4} \, . \label{eq:Tfllowbound}
            \end{align}
            Now, fix $\gamma \in (0,1]$, $\overline{c} \geq 0$ and additionally assume that 
            \begin{align}
                0 \leq c \leq \overline{c} \text{ and } \rho \geq \frac{\gamma \alpha}{ (\Delta t)^3 (1+c (\Delta t)^2 ) \norm{\cB \cB^*}} \, . \label{eq:rholowbound}
            \end{align}
            Then for $0 < \Delta t \leq 1$, we have the following upper bounds
            \begin{align}
                K_{\Psiop} &= 1 + 2 \Delta t \norm{\Psiop}_\infty \leq 1 + 2 \norm{\Psiop}_\infty \eqcolon \overline{K}_{\Psiop}, \nonumber \\
                K_{\cB} &= \sqrt{2}\big( \frac{1}{\kappa}
                 +   c_1 \Delta t \norm{\cB} + \frac{2 \Delta t}{\kappa} \norm{\Psiop}_\infty  \big)
                \leq  \sqrt{2} \Big( \frac{1}{\kappa} + \frac{\alpha}{\norm{\cB}} + \frac{2}{\kappa} \norm{\Psiop}_\infty \Big) 
                \eqqcolon \overline{K}_{\cB}, \nonumber \\
                c_3 \norm{\cB}^2 &= c_3 \norm{\cB \cB^*} = c \rho (\Delta t)^3 \norm{\cB \cB^*} \leq \frac{c \alpha}{1 + c (\Delta t)^2} \leq \overline{c} \alpha \, , \label{eq:c3upbd} \\
                L &= \begin{cases}
                     L_{\nabla \Pot} + c_3 \norm{T^{-1}} \norm{\cB \cB^*}, \, &\hat{\Pot} \text{ is given by } \eqref{eq:semiexnonstatad} \, , \\ 
                    L_{\nabla \Pot} + c_3 \norm{T^{-1}} \norm{\cB \cB^*} + \norm{ \nabla \Pot (\Xeq) }, \, &\hat{\Pot} \text{ is given by } \eqref{eq:semiexstatad} \, ,
                \end{cases} \nonumber \\
                &\leq L_{\nabla \Pot} + \frac{5}{4} \overline{c} \alpha
                +
                \begin{cases}
                0, \, &\hat{\Pot} \text{ is given by } \eqref{eq:semiexnonstatad} \, , \\
                \norm{ \nabla \Pot (\Xeq) }, \, &\hat{\Pot} \text{ is given by } \eqref{eq:semiexstatad} \, ,
                \end{cases}
                \eqqcolon \overline{L} \, . \nonumber
                \end{align}
             Let us assume that \eqref{eq:Psioplowbd} holds and that $\hat{\Pot}$ is given by \eqref{eq:semiexnonstatad}. Moreover, let $C > 0, \mathcal{\underline{V}}$ be as in \eqref{eq:potgrwthbd}. Then there exists $ 0 < h \leq 1$ depending on $\gamma, \overline{K}_\psi,  \overline{K}_{\cB}, \norm{\Psiop}_\infty, \overline{L}, \kappa, \norm{\cB}$ and $\underline{\psi}$ such that for all $0 < \Delta t \leq h$, we have 
            \begin{align}
                \mu_1 &\leq \frac{1}{2} \underline{\psi} , \  
                \mu_3 \leq \frac{\gamma \alpha}{4 \norm{\cB \cB^*}}, \ 
                \mu_4 \leq \frac{1}{4}, \nonumber \\
                \text{ and } \Delta t \Big( 1 + \frac{\overline{c}\alpha }{\norm{\cB}} \Big) &\leq \frac{1}{24} , \ 
                \Delta t \frac{\norm{\cB}}{\sqrt{2}} 
                \leq \frac{1}{12} \, , \ 3 \Delta t C \leq \frac{1}{2} , \, 
                3 \Delta t \sqrt{2} \leq \frac{1}{8} 
                \, .\label{eq:dtHepslowbound}
            \end{align}
            Due to the lower bound \eqref{eq:rholowbound}, we have  
            \begin{align}
                \frac{1}{4} c_1 \Delta t = \frac{1}{4} \rho (\Delta t)^3 (1+c (\Delta t)^2) \geq \frac{\gamma \alpha}{ 4\norm{\cB \cB^*}} \geq \mu_3 \, . \label{eq:muc1dtest}
            \end{align}
            Finally, combining the estimates above with \eqref{eq:semexHempsbound} yields 
            \begin{align}
                H_{\text{cl}, 3\Delta t}(\charX^{l+1}, \charV^{l+1})
                &\leq  H_{\text{cl}, 3\Delta t}(\charX^{l}, \charV^{l})
                - \Delta t (\underline{\psi} - \mu_1 ) \norm{(I-P_{\ks} )\charV^l}^2
                - ( \mu_2 c_1 \Delta t  - \mu_3 ) \norm{\cB^*T^{-\frac{1}{2}} \charV^l}^2 \nonumber \\
                &\quad 
                - (\Delta t)^2 \Big( 
                    \Big(1 - \frac{ \tau}{2} \Big) \frac{\varepsilon}{\Delta t}  
                    -  \norm{T^{-1}} 
                    - \frac{ \sigma }{2} - \mu_4 \Big) \norm{TF^l}^2 \nonumber \\ 
                &\leq  H_{\text{cl}, 3\Delta t}(\charX^{l}, \charV^{l}) 
                - \frac{\Delta t}{2} \underline{\psi}\norm{(I-P_{\ks} )\charV^l}^2
                - \frac{c_1 \Delta t }{4} \norm{\cB^*T^{-\frac{1}{2}} \charV^l}^2
                - \frac{(\Delta t)^2}{2} \norm{TF^l}^2 \, . \label{eq:Hepsgenest}
            \end{align}
            \textbf{Proof of Assertion \ref{ass:Hepsdecr}}: Since we have already established \eqref{eq:Hepsgenest}, it remains to show that $H_{\text{cl}, 3\Delta t}$ is bounded from below. For this purpose, we will use the bounds \eqref{eq:dtHepslowbound} and the upper bound for $\rho$ \eqref{eq:rhoupbound}. Due to \eqref{eq:c3upbd}, we have 
            \begin{align}
                6 \Delta t \Big( 1 + \frac{c_3 \norm{\cB}}{\sqrt{2}}  \Big) &\leq 6 \Delta t \Big( 1 + \overline{c} \frac{\alpha }{\norm{\cB}}  \Big) \leq \frac{1}{4} \, . \label{eq:prefactbound}
            \end{align}
            Recall that $C > 0, \underline{ \mathcal{V}} \in \R$ are given in \eqref{eq:potgrwthbd}. Thus for $Y, W \in \HS$, it holds that
            \begin{align}
                &3\Delta t \langle W, T (\nabla \Pot(Y) + \nabla \hat{\Pot}(Y) ) \rangle
                = 3 \Delta t \langle W, T (\nabla \Pot (Y) + c_3 T^{-\frac{1}{2}} \cB \cB^*T^{-\frac{1}{2} } (Y - \Xeq) ) \rangle \nonumber \\
                &\geq - 3 \Delta t\Big(  \norm{W} 2 \norm{\nabla \Pot(Y) } - \norm{W} \sqrt{2} c_3 \norm{\cB}  \norm{\cB^*T^{-\frac{1}{2}} (Y - \Xeq)} \Big) \nonumber \\
                &\geq 
                -  3 \Delta t \Big( \norm{W}^2 -  C(1 + \Pot(Y) - \underline{\mathcal{V}})  
                - \sqrt{2} c_3 \norm{\cB} \Big( 
                \frac{1}{2}\norm{\cB^*T^{-\frac{1}{2}}(Y - \Xeq) }^2 + \frac{1}{2} \norm{W}^2    \Big) \Big) \nonumber \\
                &= - 3\Delta t \Big( 1 + \frac{c_3 \norm{\cB}}{\sqrt{2}}  \Big) \norm{W}^2 
                - 3 \Delta t C(1 + \Pot(Y) - \underline{\mathcal{V}}) 
                - 3 \Delta t \frac{c_3 \norm{\cB}}{\sqrt{2}} \norm{\cB^*T^{-\frac{1}{2}} (Y - \Xeq)}^2 \nonumber \\
                &\geq 
                - 6\Delta t  \Big( 1 + \frac{c_3 \norm{\cB}}{\sqrt{2}}  \Big) \norm{T^{-\frac{1}{2}} W}^2 - 3 \Delta t C(1 + \Pot(Y) - \underline{\mathcal{V}}) 
                - 3 \Delta t \frac{c_3 \norm{\cB}}{\sqrt{2}} \norm{\cB^*T^{-\frac{1}{2}} (Y - \Xeq)}^2 \nonumber \\
                  &\geq - \frac{1}{4} \norm{T^{-\frac{1}{2}} W}^2 - \frac{1}{2} (1 + \Pot(Y) - \underline{\mathcal{V}}) 
                - \frac{c_3}{4} \norm{\cB^*T^{-\frac{1}{2}} (Y - \Xeq)}^2 \, . \nonumber 
            \end{align}
            In the last step, we have used \eqref{eq:dtHepslowbound} and \eqref{eq:prefactbound}. Recall that $\Pot$ is bounded from below by the constant $\underline{\mathcal{V}}$ by Assumption \ref{ass:control}. Using the estimate above, we obtain
            \begin{align*}
               H_{\text{cl}, 3 \Delta t}(Y,W)  &= \frac{1}{2} \norm{T^{-\frac{1}{2}} W}^2 + \Pot(Y) + \frac{c_3}{2} \norm{\cB^*T^{-\frac{1}{2}} (Y - \Xeq)}^2 
               + 3 \Delta t \langle W, T (\nabla \Pot(Y) + \nabla \hat{\Pot}(Y) \rangle \\
               &\geq \frac{1}{4} \norm{T^{-\frac{1}{2}} W}^2 + \frac{c_3}{4} \norm{\cB^*T^{-\frac{1}{2}} (Y - \Xeq)}^2 + \underline{\mathcal{V}} - \frac{1}{2}   \, ,
            \end{align*}
            which shows that $H_{\text{cl}, 3 \Delta t}$ is bounded from below.
            \\
            \textbf{Proof of Assertion \ref{ass:HepsdecrBsurj}}: To complete the proof, it remains to consider the case where $\hat{\Pot}$ is given by \eqref{eq:semiexstatad}, $\cB$ is surjective, $\ks = \HS$ and $0 < c \leq \overline{c}$. In particular, we note that $P_{\ks} = I$. Thus by using \eqref{eq:hBmeanlowbound}, \eqref{eq:Hepsgenest} reduces to
            \begin{align*}
                H_{\text{cl}, 3\Delta t}(\charX^{l+1}, \charV^{l+1})
                &\leq  
                H_{\text{cl}, 3\Delta t}(\charX^{l}, \charV^{l}) 
                - \frac{c_1 \Delta t \kappa^2}{8} \norm{ \charV^l}^2
                - \frac{(\Delta t)^2}{2} \norm{TF^l}^2 \, . 
            \end{align*}
            In particular, we have shown \eqref{eq:semexHepsboundsurj} with $\nu = \kappa^2 /{8}$.
            It remains to prove that $H_{\text{cl}, 3 \Delta t}$ is bounded from below. Similarly as for the potential \eqref{eq:semiexnonstatad}, we have for $Y, W \in \HS$
            \begin{align*}
                &3\Delta t \langle W, T (\nabla \Pot(Y) + \nabla \hat{\Pot}(Y) ) \rangle \\
                &= 3 \Delta t \Big( 
                \langle W, T (\nabla \Pot(Y)  + 
                c_3 T^{-\frac{1}{2}} \cB \cB^*T^{-\frac{1}{2} } (Y - \Xeq) )
                \rangle
                 - 
                 \langle W, T \nabla \Pot(\Xeq) \rangle 
                 \Big) \\
                &\geq - \frac{1}{4} \norm{T^{-\frac{1}{2}} W}^2 - \frac{1}{2} (1 + \Pot(Y) - \underline{\mathcal{V}}) 
                - \frac{c_3}{4} \norm{\cB^*T^{-\frac{1}{2}} (Y - \Xeq)}^2 
                - 6 \sqrt{2} \Delta t \norm{T^{-\frac{1}{2}} W} \big\lVert \nabla \Pot (\Xeq) \big\rVert
                \\
                &\geq -  \frac{3}{8} \norm{T^{-\frac{1}{2}} W}^2 - \frac{1}{2} (1 + \Pot(Y) - \underline{\mathcal{V}}) 
                - \frac{c_3}{4} \norm{\cB^*T^{-\frac{1}{2}} (Y - \Xeq)}^2 
                \nonumber 
                - \frac{1}{8} \big\lVert \nabla \Pot (\Xeq) \big\rVert^2 \, .
            \end{align*}
            Therefore, lower-boundedness of $H_{\text{cl}, 3 \Delta t }$ follows from the inequality 
            \begin{align}
               H_{\text{cl}, 3 \Delta t}(Y,W)  &= \frac{1}{2} \norm{T^{-\frac{1}{2}} W}^2 + \Pot(Y) - \langle \nabla \Pot (\Xeq), Y - \Xeq \rangle  + \frac{c_3}{2} \norm{\cB^*T^{-\frac{1}{2}} (Y - \Xeq)}^2 \nonumber \\
               &\quad 
               + 3 \Delta t \langle W, T (\nabla \Pot(Y) + \nabla \hat{\Pot}(Y) \rangle \nonumber \\
               &\geq \frac{1}{8} \norm{T^{-\frac{1}{2}} W}^2 + \frac{c_3 \kappa^2 }{8} \norm{ Y - \Xeq }^2 + \underline{\mathcal{V}} - \frac{1}{2}
               - \frac{1}{8} \norm{ \nabla \Pot (\Xeq) }^2
               - \norm{ \nabla \Pot (\Xeq) } \norm{ Y - \Xeq } \nonumber \\ 
               &\geq 
               \frac{1}{8} \norm{T^{-\frac{1}{2}} W}^2 +   \frac{c_3 \kappa^2}{16} \norm{ Y - \Xeq }^2 + \underline{\mathcal{V}} - \frac{1}{2}
               - \frac{1}{8} \norm{ \nabla \Pot (\Xeq) }^2
               - 
               \frac{4}{ c_3 \kappa^2} \norm{ \nabla \Pot (\Xeq) }^2 \nonumber \\ 
               &\geq \frac{1}{8} \norm{T^{-\frac{1}{2}} W}^2 +  \frac{c_3 \kappa^2}{16} \norm{ Y - \Xeq }^2 + \underline{\mathcal{V}} - \frac{1}{2}
               - \Big( \frac{1}{8} + 
                \frac{4}{ c \gamma \alpha \kappa^2} \norm{\cB \cB^*} (1+\overline{c})  \Big) \norm{ \nabla \Pot (\Xeq) }^2 \, . \nonumber
            \end{align}
            In the third line above, we have used \eqref{eq:hBmeanlowbound}, while the last step follows from  
            \begin{align*}
                c_3 = c \rho (\Delta t)^3 \geq c \frac{\gamma \alpha}{(1+c (\Delta t)^2)\norm{\cB \cB^*} } \geq c \frac{\gamma \alpha}{(1 + \overline{c} ) \norm{\cB \cB^*}} \, .
            \end{align*}
            In summary, the lower bound above demonstrates that $H_{\text{cl}, 3 \Delta t }$ is bounded from below.
\end{proof}
\begin{Rem}
            The reason we introduced the modified Hamiltonian $H_{\text{cl}, \varepsilon}$ instead of working directly with $H_{\text{cl}}$ are the terms \RN{5}, \RN{6} and \RN{7} in the energy balance \eqref{eq:semexHepsenbalance}. Even though those terms are of second order in $\Delta t$, they depend on $F^l$, while the dissipative term $\RN{1} = - \Delta t \langle \charV^l, D^l \charV^l \rangle$ is independent of $F^l$. By modifying the Hamiltonian to $H_{\text{cl}, \varepsilon }$, we get the additional dissipative term $ \RN{2} = - \varepsilon \Delta t \norm{TF^l}^2$, which, if the parameters are chosen accordingly, is sufficient to compensate for all second order terms containing $F^l$. The fact that the continuous time energy balance is not preserved on the discrete level is a consequence of the chosen time integration scheme, i.~e.~the semi-explicit Euler scheme. However, since the potential $\Pot$ is in general not quadratic, using an explicit higher order scheme does not resolve the issue of higher order correction terms containing $F^l$. 
    \end{Rem}

    \section*{Acknowledgments}
  This work was funded by the Deutsche Forschungsgemeinschaft (DFG, German Research Foundation) – Project-ID 531152215 – CRC 1701. We acknowledge the assistance of ChatGPT-5 for language editing and stylistic improvements. The authors assume responsibility for all content.

\bibliographystyle{unsrtnat}
\bibliography{references}

\end{document}